\documentclass[11pt,numbers]{article}

\usepackage{stmaryrd}
\SetSymbolFont{stmry}{bold}{U}{stmry}{m}{n}

\usepackage[margin=1in]{geometry}

\usepackage[utf8]{inputenc}
\usepackage[T1]{fontenc}
\usepackage[american]{babel}
\usepackage{microtype}
\usepackage{lmodern}

\usepackage[table,xcdraw]{xcolor}  

\usepackage{amsmath, amssymb, amsthm, mathtools, mathrsfs, bbm, bm}
\usepackage{nicefrac}
\usepackage{dsfont}
\usepackage{stmaryrd} 

\usepackage{graphicx}
\usepackage{subcaption}
\captionsetup{compatibility=false}
\usepackage{float}
\usepackage{svg}
\usepackage{tikz}
\usepackage{pgfplots}
\pgfplotsset{compat=newest}

\usepackage{booktabs}
\usepackage{multirow}
\usepackage{threeparttable}

\usepackage{algorithm}
\usepackage{algpseudocode}

\usepackage{mdframed}
\newtheorem{theorem}{Theorem}
\newtheorem{definition}{Definition}
\newtheorem{assumption}{Assumption}
\newtheorem{lemma}{Lemma}
\newtheorem{remark}{Remark}

\usepackage{natbib}
\bibliographystyle{plainnat}

\let\cite\citep

\usepackage{url}
\usepackage{hyperref}
\usepackage{cleveref}

\usepackage{enumitem}

\usepackage{xspace}
\usepackage{pifont}


\usepackage{authblk}

\begin{document}
\title{\bfseries Generalization in Representation Models via Random Matrix Theory: Application to Recurrent Networks}

\author[1,2]{Yessin Moakher}
\author[1]{Malik Tiomoko}
\author[3]{Cosme Louart}
\author[4]{Zhenyu Liao}

\affil[1]{Huawei Noah’s Ark Lab, Huawei Technologies, Paris, France}
\affil[2]{École Polytechnique, France}
\affil[3]{Chinese University of Hong Kong, Shenzhen, China}
\affil[4]{Huazhong University of Science and Technology, Wuhan, China}
\date{}
\maketitle






\begin{abstract}
We first study the generalization error of models that use a fixed feature representation (frozen intermediate layers) followed by a trainable readout layer. This setting encompasses a range of architectures, from deep random-feature models to echo-state networks (ESNs) with recurrent dynamics. Working in the high-dimensional regime, we apply Random Matrix Theory to derive a closed-form expression for the asymptotic generalization error.
We then apply this analysis to recurrent representations and obtain concise formula that characterize their performance.
\emph{Surprisingly}, we show that a linear ESN is equivalent to ridge regression with an exponentially time-weighted (“memory”) input covariance, revealing a clear inductive bias toward recent inputs. Experiments match predictions: ESNs win in low-sample, short-memory regimes, while ridge prevails with more data or long-range dependencies. Our methodology provides a general framework for analyzing overparameterized models and offers insights into the behavior of deep learning networks.

\end{abstract}
\section{Introduction}

Deep learning has achieved remarkable empirical success across a wide range of applications. Despite their extreme overparameterization, modern neural networks often generalize well, a phenomenon that challenges classical statistical learning theory. In particular, recent observations of double descent behavior reveal that conventional machine learning arguments fail to capture the complexities of high-dimensional learning dynamics. This has motivated the development of theoretical frameworks aimed at understanding when and why overparameterized models generalize.

A promising avenue in this direction is provided by random features models, which were originally introduced as scalable approximations to kernel methods \cite{NIPS2007_013a006f}. Over time, these models have also been studied as surrogates for neural networks \cite{mei2020generalizationerrorrandomfeatures}, offering a simplified yet powerful framework for understanding generalization. In these models, input vectors are mapped through a random nonlinear transformation, after which only a linear readout layer is trained. This decoupling of representation and readout retains much of the expressive power of neural networks while allowing for deeper theoretical analysis.

Building on this line of work, we focus on the broader question of learning under fixed representations. In this setting, the representation function is fixed a priori, and learning occurs exclusively through the readout. This setting arises in various contexts, including random-feature models \cite{NIPS2007_013a006f},  
random intermediate layers in deep networks \cite{schroder2023deterministic},  
and reservoir computing architectures such as \emph{Echo State Networks} (ESNs) \cite{article}.  
Its popularity stems from the practical advantages it offers for analysis.

However, despite its prevalence, the generalization properties of such models particularly when the feature map is structured rather than i.i.d. are still not fully understood. In the reccurent setting, \cite{pmlr-v48-couillet16} analyzed ESNs trained and later tested on a single time series, assuming both the input and target to be independent and deterministic, and derived closed-form expressions for the asymptotic mean-square error. While this provided valuable insights, the restrictive assumptions of independence and determinism limit its applicability.

In recent years, Random Matrix Theory (RMT) has emerged as a central tool for analyzing learning algorithms in the high-dimensional regime, where the number of samples and the feature dimension grow proportionally. RMT provides precise asymptotic predictions for performance metrics such as training and test error \cite{couillet2022random} and has been also applied to estimation problems, including for multi-task regression \cite{IlbertEtAl2024}.
\paragraph{Our approach.}
We address these gaps by developing a unified RMT framework for models with \emph{arbitrary fixed feature representations}, 
including structured, recurrent, linear and non linear maps, under the assumption that the transformed features form a \emph{concentrated random vector} \cite{Ledoux2001} and does not require i.i.d.\ or full-rank projections. Unlike most prior work on static models, we tackle the 
\emph{recurrent} setting, where the representation arises from a neural 
network with temporal dynamics.  

\paragraph{Contributions.}
Our main contributions are:
\begin{enumerate}
    \item \textbf{General risk characterization (\Cref{thm:asymp_expressions}).} 
   We derive a closed-form asymptotic limit for the out-of-sample risk of ridge regression on any fixed representation under the concentrated random vector assumption, including second-order deterministic equivalents obtained via concentration-of-measure arguments.
    \item \textbf{Specialization to ESNs (\Cref{thm:linear_esn}).} 
    We derive a closed-form performance characterization for Linear Echo State Recurrent Networks.
    \item \textbf{Insights.} We show that the test risk of  a linear ESN is equivalent to ridge regression on an \emph{exponentially time-weighted} (“memory”) input covariance and derive intuition on why we don't observe double descent with Linear ESN.
\end{enumerate}

\section{Related Work}

Our study is connected to the expanding literature that uses 
\emph{random matrix theory} (RMT) to analyze machine learning models 
in high-dimensional settings. Variations across studies mainly stem from 
differences in the assumed distribution of the features $X$ and from the 
relationship between $X$ and the target $Y$ (often referred to as the 
\emph{teacher--student} framework).

Early analyses often assumed that $Y$ was \emph{deterministic}. 
For instance, \cite{pmlr-v48-couillet16} examined the dynamics of a 
zero-shot linear echo state network (ESN) under this setting. 
Similarly, \cite{louart2018random} investigated a Gram random matrix 
student model with predictions of the form 
$\hat{Y} = \sigma(WX)$ to study random neural networks, 
also assuming deterministic targets $Y$.  
In the same spirit, \cite{NIPS2017_0f3d014e} studied the Gram matrix 
$\sigma(WX)$ when both the data $X$ and targets $Y$ were independent 
Gaussian variables.

Subsequent work shifted towards settings where $Y$ is 
\emph{linearly dependent} on $X$, such as
\[
    Y = \theta^\star X + \epsilon.
\]
In the case of linear student predictors 
$\hat{Y} = \hat{\theta} X$ corresponding to ridge regression models 
a variety of theoretical analyses have been carried out, 
highlighting phenomena such as the \emph{double descent} 
of the test error curve.  
Early results often assumed \emph{isotropic} feature distributions. 
For example, \cite{10.3150/14-BEJ609} studied ridge regression 
when the inputs $\boldsymbol{x_i}$ were sampled from an isotropic Gaussian 
distribution $\boldsymbol{x_i} \sim \mathcal{N}(0, I_d)$.  
Later works, such as \cite{10.5555/3495724.3496572} and 
\cite{pmlr-v130-richards21b}, extended these results to more general 
covariance structures. \cite{10.1214/21-AOS2133} derived insights on the ridgeless least squares interpolation.

Another closely related line of work considers \emph{random projections} prior to learning.  
For instance, \cite{doi:10.1137/23M1558781} analyzed models where the 
training data are transformed via random matrices, considering settings 
of the form $WSX$ where $S$ is a random projection matrix.  
Such formulations naturally connect to the case of linear ESNs, 
in which the reservoir acts as a fixed, structured projection of the input.  
However, most existing analyses assume i.i.d. entries in the projection 
matrix and often full-rank transformations.

\cite{mei2020generalizationerrorrandomfeatures} adds a non linear component the teacher model and perform ridge regression on random features. 
More recently, \cite{NEURIPS2023_38a1671a} study the problem of learning a polynomial target function when data is provided with a spiked covariance structure $\boldsymbol{x_i} \sim \mathcal{N}(0, I_d+\boldsymbol{\theta \mu\mu^\top} )$.
\paragraph{Notation} Throughout the paper, we use capital letters  to denote matrices, lowercase letters  for scalars, and lowercase bold letters for vectors. 
The Frobenius norm is denoted by \(\|\cdot\|_F\), the operator norm by \(\|\cdot\|\), and the Euclidean norm by \(\|\cdot\|_2\).
We say that $u = O(v)$ if the ratio $u/v$ remains bounded in this limit. 

\paragraph{Organization} The remainder of the paper is organized as follows. 
Section~\ref{sec:problem_setting} introduces the problem setting and defines the teacher and student models. 
Section~\ref{sec:main_technical_results} states our main assumptions and theoretical risk characterization, including comparisons between models. 
Section~\ref{sec:experiments} presents experiments supporting our theory, and Section~\ref{sec:conclusion} concludes.

\section{Problem Setting}
\label{sec:problem_setting}

We consider a supervised learning task in which the goal is to learn a predictor 
that maps an input sequence to an output vector, based on a finite set of training samples.

\paragraph{Training and test setup.}
Let $\{ (\mathbf{u}_i, \mathbf{y}_i) \}_{i=1}^N$ be $N$ independent and identically 
distributed (i.i.d.) training pairs, where
\[
    \mathbf{u}_i \in \mathbb{R}^T, 
    \quad \mathbf{y}_i \in \mathbb{R}^q.
\]

Throughout, both the training and test pairs are assumed to be generated from the following model.

\begin{definition}[Noisy linear model]
\label{def:noisy_linear_model}
An input–output pair $(\mathbf{u}, \mathbf{y}) \in \mathbb{R}^T \times \mathbb{R}^q$ 
is said to follow a \emph{noisy linear model} if
\begin{equation}
    \mathbf{y} = \Theta_\ast^{\top} \mathbf{u} + \boldsymbol{\epsilon},
    \label{eq:model}
\end{equation}
where:
\begin{itemize}
    \item $\Theta_\ast \in \mathbb{R}^{T \times q}$ is the (unknown) ground-truth parameter matrix,
    \item $\boldsymbol{\epsilon} \in \mathbb{R}^q$ is a noise vector with i.i.d.\ entries of zero mean and variance $\sigma^2$, and $\boldsymbol{\epsilon}$ is independent of $\mathbf{u}$.
\end{itemize}
This model can be viewed as a linearization of more general nonlinear models in high dimensions 
(see, e.g., \cite{NIPS2017_0f3d014e}, \cite{NEURIPS2023_abccb8a9}).
\end{definition}

\paragraph{Feature representation.}
Rather than using the raw input $\mathbf{u}$ directly, we first transform it via a fixed (possibly nonlinear) 
representation map:
\begin{equation}
    F : \mathbb{R}^{T} \to \mathbb{R}^{n}, 
    \qquad \mathbf{z} = F(\mathbf{u}).
    \label{eq:feature_representation}
\end{equation}
Examples of such $F$ include:
\begin{itemize}
    \item the reservoir state of a (linear or nonlinear) Echo State Network (ESN),
    \item random feature maps,
    \item intermediate layers of a pretrained network.
\end{itemize}
In this work, $F$ is \emph{fixed and known}; only the final linear readout is learned from data.

\paragraph{Linear readout with ridge regularization.}
Let $Z = [\mathbf{z}_1, \dots, \mathbf{z}_N] \in \mathbb{R}^{n \times N}$ 
be the matrix of feature vectors and 
$Y = [\mathbf{y}_1, \dots, \mathbf{y}_N] \in \mathbb{R}^{q \times N}$ 
the matrix of corresponding targets.  
We estimate the output weights via ridge-regularized regression:
\begin{equation}
    \hat{W}_{\!\text{out}}
    := \arg\min_{W \in \mathbb{R}^{q \times n}}
        \frac{1}{N} \sum_{i=1}^N \lVert \mathbf{y}_i - W \mathbf{z}_i \rVert_2^2
        + \lambda \lVert W \rVert_F^2
    = \frac{1}{N} Y Z^{\top} \left( \frac{1}{N} Z Z^{\top} + \lambda I_n \right)^{-1},
    \label{eq:ridge_readout}
\end{equation}
where $\lambda > 0$ is the regularization parameter.

\paragraph{Prediction.}
Given a new test input $\mathbf{u}' \in \mathbb{R}^T$, we compute its feature vector 
$\mathbf{z}' := F(\mathbf{u}')$ and output
\begin{equation}
    \hat{\mathbf{y}}' = \hat{W}_{\text{out}} \, \mathbf{z}'.
    \label{eq:def_z0}
\end{equation}

\begin{definition}[Out-of-sample risk]
\label{def:out-of-sample}
The \emph{out-of-sample risk} of the predictor is the mean squared prediction error on an independent test sample $(\mathbf{u'},\mathbf{y'})$:
\begin{equation}
    \mathcal{R} := \frac{1}{q} \, \mathbb{E}\left[ \left\| \mathbf{y}' - \hat{\mathbf{y}}' \right\|_2^2 \right],
    \label{eq:risk_def}
\end{equation}
where the expectation is taken over both training and test data.
\end{definition}

\section{Main Technical Results}
\label{sec:main_technical_results}
\subsection{Asymptotic characterization of out-of-sample risk}

We define the following quantities: $\Sigma_u := \mathbb{E}[\mathbf{uu}^\top] = \operatorname{Cov}(\mathbf{u})+\mathbb{E}(\mathbf{u})\mathbb{E}(\mathbf{u})^\top \in \mathbb{R}^{T \times T}$, $\Sigma_{z}:=\mathbb{E}[\mathbf{zz}^\top] \in \mathbb{R}^{n \times n}$, $\Sigma_{uz} := \mathbb{E}[\mathbf{uz}^\top] \in \mathbb{R}^{T \times n}$ and the resolvent $Q:= (\frac{Z Z^\top}{N}+\lambda I_n)^{-1}$.

\paragraph{Assumptions.}In order to use Random Matrix Theory (RMT) tools, we make assumptions on the data distribution
and the asymptotic regime.

\begin{definition}[Concentrated random vector]\label{def:concentrated}
A random vector $\mathbf{x} \in \mathbb{R}^n$ is said to be \emph{concentrated} if there exist constants $C, c > 0$, independent of $n$, such that: for every $1$-Lipschitz function $f : \mathbb{R}^n \to \mathbb{R}$ and all $t \geq 0$,
\[
\mathbb{P}\bigl( \lvert f(\mathbf{x}) - \mathbb{E}[f(\mathbf{x})] \rvert \geq t \bigr) \leq C e^{-c t^2}.
\]
\end{definition}

\begin{assumption}\label{assum:concentration}
The representation vector $\mathbf{z} \in \mathbb{R}^T$ is a concentrated random vector in the sense of the definition \ref{def:concentrated}  and satisfies $\|\mathbb{E}[\mathbf{z}]\|_2 = O(1)$.
\end{assumption}
This class includes Gaussian vectors with covariance matrices bounded in operator norm, uniform vectors on the sphere, and any Lipschitz transformation thereof (e.g., features from GANs \cite{seddik2020random}). This assumption allows us to apply the Hanson--Wright inequality in the context of random matrix theory with non-isotropic vectors \cite{Adamczak2014ANO}. Other works, (e.g. \cite{doi:10.1137/23M1558781}) make the assumption $z = \Sigma^{1/2} S$, where $S$ is i.i.d.\ subgaussian and $\Sigma$ bounded, in order to apply the Hanson--Wright inequality in the isotropic case \cite{rudelson2013hansonwrightinequalitysubgaussianconcentration}. The two setups overlap (e.g., Gaussians), but neither contains the other in full generality, and in most cases the same results could be derived under either assumption.

\begin{assumption}\label{assum:asymptotic}
We work in the classical random matrix theory proportional asymptotics regime, where the number of reservoirs $n$ and the number of samples $N$ diverge proportionally. That is,  
$
\frac{n}{N} \xrightarrow[]{} \gamma \in (0, \infty), \quad \text{as } n, N \to \infty.$

\end{assumption}
\paragraph{Asymptotic Expressions.}
We will make use of the following quantities:
\begin{align*}
    \bar Q &:= \left(\frac{\Sigma_z}{1+\delta} + \lambda I_n\right)^{-1}, \quad 
    \delta := \frac{1}{N} \operatorname{Tr}(\Sigma_z \bar Q), \quad \alpha := \frac{1}{N} \operatorname{Tr} \left( \frac{\Sigma_z}{1 + \delta} \bar Q \frac{\Sigma_z}{1 + \delta} \bar Q \right)
\end{align*}

\begin{theorem}[Fixed Representation Generalization]\label{thm:asymp_expressions}
Let \(\mathbf{u} \in \mathbb{R}^T\) be an input vector, and let \(\mathbf{z} = F(\mathbf{u}) \in \mathbb{R}^n\) be a representation vector obtained through a transformation of \(\mathbf{u}\).

Under the linear model of Definition~\ref{def:noisy_linear_model}, where predictions are defined as in Equation~\ref{eq:def_z0}, and under Assumptions~\ref{assum:concentration} and~\ref{assum:asymptotic}, the following expressions hold:
$
    \mathcal{R} = \mathcal{B}^2 + \mathcal{V}+\sigma^2$ where
\begin{align*}
&\mathcal{B}^2  \to \frac{1}{1 - \alpha} \Bigg(
    \operatorname{Tr}(\Theta_\ast^{\top} \Sigma_u \Theta_\ast) 
    - \frac{2}{1 + \delta} \operatorname{Tr} \left( \Theta_\ast^{\top} \Sigma_{uz} \bar Q \Sigma_{uz}^\top \Theta_\ast \right) 
     + \frac{1}{(1 + \delta)^2} \operatorname{Tr} \left( \Theta_\ast^{\top} \Sigma_{uz} \bar Q \Sigma_z \bar Q \Sigma_{uz}^\top \Theta_\ast \right)
\Bigg)  \\
&\mathcal{V} \to \sigma^2  \frac{\alpha}{1 - \alpha} 
\end{align*}
\end{theorem}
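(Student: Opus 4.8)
The plan is to expand the out-of-sample risk $\mathcal{R} = \tfrac{1}{q}\mathbb{E}\|\mathbf{y}' - \hat W_{\text{out}}\mathbf{z}'\|_2^2$ by substituting the noisy linear model $\mathbf{y}' = \Theta_\ast^\top\mathbf{u}' + \boldsymbol{\epsilon}'$ and the ridge estimator $\hat W_{\text{out}} = \tfrac1N Y Z^\top Q$, then isolate a bias term (involving only $\Theta_\ast$, $\mathbf{u}'$, $\mathbf{z}'$, and the training features), a variance term (involving $\boldsymbol{\epsilon}$ from the training labels), and the irreducible noise $\sigma^2$ from $\boldsymbol{\epsilon}'$. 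Writing $Y = \Theta_\ast^\top U + E$ with $U = [\mathbf{u}_1,\dots,\mathbf{u}_N]$ and $E$ the training-noise matrix, the cross terms vanish in expectation since $E$ is independent of everything else and mean zero, leaving $\mathcal{B}^2$ as the $\Theta_\ast$-dependent part and $\mathcal{V} = \tfrac{\sigma^2}{N^2}\mathbb{E}\operatorname{Tr}(Z^\top Q \Sigma_z Q Z)$ plus lower-order pieces. After taking the expectation over the test point (which replaces $\mathbf{z}'\mathbf{z}'^\top$ by $\Sigma_z$ and $\mathbf{u}'\mathbf{z}'^\top$ by $\Sigma_{uz}$, etc.), the whole problem reduces to computing deterministic equivalents for a small list of resolvent functionals: $\mathbb{E}[Q]$, $\mathbb{E}[\tfrac1N Z^\top Q A Q Z]$-type quantities, and bilinear forms $\mathbb{E}[\tfrac1N U Z^\top Q A Q Z U^\top]$ for fixed matrices $A$.

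The core technical step is establishing the first- and second-order deterministic equivalents under Assumption~\ref{assum:concentration}. For the first order, I would show $\mathbb{E}[Q] \leftrightarrow \bar Q = (\tfrac{\Sigma_z}{1+\delta} + \lambda I_n)^{-1}$ with $\delta = \tfrac1N\operatorname{Tr}(\Sigma_z\bar Q)$, via the standard leave-one-out (Sherman--Morrison) argument: write $Q = Q_{-i} - \tfrac{1}{N}\tfrac{Q_{-i}\mathbf{z}_i\mathbf{z}_i^\top Q_{-i}}{1 + \tfrac1N\mathbf{z}_i^\top Q_{-i}\mathbf{z}_i}$, use concentration of the quadratic form $\tfrac1N\mathbf{z}_i^\top Q_{-i}\mathbf{z}_i$ around $\tfrac1N\operatorname{Tr}(\Sigma_z\mathbb{E}Q_{-i})$ — this is exactly where the Hanson--Wright inequality for concentrated vectors \cite{Adamczak2014ANO} enters, controlling $\mathbf{z}_i^\top B \mathbf{z}_i - \operatorname{Tr}(\Sigma_z B)$ — and close the fixed-point equation for $\delta$. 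The second-order equivalents, needed for the $QAQ$ terms, follow by differentiating the first-order identity or by a second resolvent expansion; the bilinear $U$-dependent terms are handled by treating the rows of $U$ as additional (correlated, via $\Sigma_{uz}$) test-like directions and reducing them to traces against $\bar Q$ and $\bar Q \Sigma_z \bar Q$. The scalar $\alpha = \tfrac1N\operatorname{Tr}\!\big(\tfrac{\Sigma_z}{1+\delta}\bar Q\tfrac{\Sigma_z}{1+\delta}\bar Q\big)$ and the $\tfrac{1}{1-\alpha}$ prefactor emerge from resumming the geometric-type correction that appears when a $QAQ$ functional is unfolded — the $\alpha$ is precisely the self-consistent "loop" contribution.

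Assembling the pieces: substitute the deterministic equivalents into the bias expansion to get the three-trace expression for $\mathcal{B}^2$, with the $\tfrac{1}{1-\alpha}$ factor carried through uniformly; do the same for the variance to obtain $\mathcal{V} \to \sigma^2\tfrac{\alpha}{1-\alpha}$; and add the $\sigma^2$ from the test noise. Throughout, concentration of measure (Assumption~\ref{assum:concentration}) upgrades the in-expectation identities to almost-sure convergence by controlling fluctuations of Lipschitz functionals of $Z$, and $\|\mathbb{E}[\mathbf{z}]\|_2 = O(1)$ ensures the mean part of $\mathbf{z}$ contributes only to $\Sigma_z$, $\Sigma_{uz}$ and not to spurious rank-one blow-ups. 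The main obstacle I anticipate is the bookkeeping for the second-order/bilinear terms: keeping the leave-one-out corrections consistent across the $\Sigma_z Q \Sigma_z Q$ and $\Sigma_{uz}\bar Q\Sigma_z\bar Q\Sigma_{uz}^\top$ structures so that a single scalar $\alpha$ and a single factor $1/(1-\alpha)$ suffice — i.e., verifying that the same self-consistent correction governs both the bias and variance channels, rather than two different fixed points.
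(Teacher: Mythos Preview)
Your proposal is correct and matches the paper's approach almost exactly: the same bias--variance decomposition after substituting $Y=\Theta_\ast^\top U+E$, Sherman--Morrison leave-one-out combined with Hanson--Wright for concentrated vectors to control the quadratic forms $\tfrac1N\mathbf{z}_i^\top Q_{-i}\mathbf{z}_i$, and the standard first- and second-order deterministic equivalents for $\mathbb{E}[Q]$ and $\mathbb{E}[Q\Sigma_z Q]$. The paper carries out the second-order step by an explicit double leave-one-out expansion (splitting into diagonal $i{=}j$ and off-diagonal $i{\neq}j$ pieces) rather than differentiation, and the bookkeeping you rightly flag as the main obstacle---in particular, the diagonal contribution of the $Q\Sigma_z Q$ term producing an extra $\alpha\cdot\operatorname{Tr}(\Theta_\ast^\top\Sigma_u\Theta_\ast)$ that, together with analogous cross-term corrections, yields the uniform $1/(1-\alpha)$ prefactor---is precisely where the paper spends most of its effort.
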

\begin{proof}
See Appendix \ref{subsec:proof_of_lem_bias-variance_decomp}. 
The argument combines Sherman--Morrison identities to disentangle the resolvent $Q$ from the other random variables, concentration of quadratic forms to replace terms by their expectations, 
and deterministic equivalents for the limit. 
\end{proof}

\begin{remark}[Special Cases]\label{rm:thm1}
    Under the assumption that $\mathbf{u}$ is concentrated, note that for $F$ equal to the identity we recover ridge regression. For $F$ given by an ESN or even a feedforward neural network, under the assumptions that the weight matrices are normalized, and activation functions $f$ are Lipschitz with $f(0) = 0$, we have that $\mathbf{z}$ is concentrated with $\|\mathbb{E}[\mathbf{z}]\|_2 = O(1)$. The same holds for random projections with bounded operator norm, not necessarily i.i.d. sub-Gaussian.
\end{remark}
\subsection{Application to Recurrent Models Representations}

The goal of this section is to apply Theorem~\ref{thm:asymp_expressions} to the case of recurrently generated representations. 
To this end, we consider an Echo State Network (ESN), a class of recurrent neural networks (RNNs) designed for sequential data processing, particularly in time series forecasting, speech recognition, and dynamical system modeling.
\begin{definition}[Echo State Network]
An Echo State Network consists of:
\begin{enumerate}
    \item A fixed input layer \( \boldsymbol{w_{ in}} \in \mathbb{R}^{n} \) that maps the input $u_i(t) \in \mathbb{R}$ into an \( n \)-dimensional space.
    \item A fixed recurrent reservoir layer \( W \in \mathbb{R}^{n \times n} \) that captures the temporal dependency of data.
    \item A trainable output layer \( W_{\rm out} \in \mathbb{R}^{q \times n} \) that maps reservoir states to predictions.
\end{enumerate}
For an input $u_i(t)$, the reservoir state of the ESN is denoted $\mathbf{x_i}(t) \in \mathbb{R}^{n}$ and evolves according to:
\begin{equation}\label{eq:reservoir_dynamics}
    \mathbf{x_i(t)} = f\left( u_i(t) \boldsymbol{w_{ in}} + W \mathbf{x_i}(t-1)\right),
\end{equation}
where \(\mathbf{ x_i}(0) = 0 \), and \( f(\cdot) \) is the activation function such as $f(t) = \tanh(t)$ or $f(t) = {\rm ReLU}(t) = \max(t,0)$ that applies entry-wise.
The representation vector \( \mathbf{z_i} \in \mathbb{R}^n \) is defined as the reservoir state at the (final) time step $T$: \( \mathbf{z_i} = \mathbf{ x_i}(T) \). 
\end{definition}
Compared to standard RNNs, ESNs fix the input and recurrent weights (typically drawn randomly) and train only the output layer.

\paragraph{Linear ESN}
For linear ESN (i.e $f$ is the identity), we have a closed form for the relationship between $U$ and $Z$, that is :
\begin{equation}\label{eq:S}
    Z = SU, \quad \text{where } S = \begin{bmatrix}
W^{T-1} \boldsymbol{w_{\rm in}} ,
W^{T-2} \boldsymbol{w_{\rm in}} ,
\ldots ,
W^0 \boldsymbol{w_{\rm in}}
\end{bmatrix}  \in \mathbb{R}^{n \times T}
\end{equation}
The matrix $U$ is being projected by a sort of a Kalman controllability matrix $S$. In that case we have a closed form expression of $\Sigma_{uz} =  \Sigma S^\top $ and $\Sigma_z = S \Sigma S^\top$.
\paragraph{Asymptotic Expressions.}
To ensure dynamical stability and guarantee that $\boldsymbol{z}$ remains a concentrated vector under the assumption that $\boldsymbol{u}$ is concentrated, we impose the following assumption.
\begin{assumption}[Linear ESN setting]
\label{assump:linear_esn}
We consider a linear ESN with recurrent matrix
$W = \frac{W_0}{\varphi \, \rho(W_0)},$ where $W_0 \in \mathbb{R}^{n \times n}$ has i.i.d.\ entries $W_{0,ij} \sim \mathcal{N}(0,1)$, $\rho(W_0)$ is its spectral radius, and $\varphi > 0$ controls the effective spectral radius.  
The input weight vector $\boldsymbol{w}_{\mathrm{in}}$ has i.i.d.\ entries normalized $\mathcal{N}(0,\frac{1}{n})$.
\end{assumption}
The parameter $\varphi < 1$ plays a key role in the dynamical stability of the ESN. Larger $\varphi$ can capture longer temporal dependencies but may also lead to instability if the system enters a regime of diverging activations.
\begin{theorem}[Linear ESN Generalization]
\label{thm:linear_esn}
Let $(\mu_i, \boldsymbol{v_i})$ be the eigenvalue–eigenvector pairs of
$
\Sigma_u^{1/2} \, \operatorname{diag}(\varphi^{i-T})_{1 \leq i \leq T} \, \Sigma_u^{1/2}.$
Under the assumption that $\boldsymbol{u} \in \mathbb{R}^T$ is concentrated, and under Assumptions~\ref{assum:asymptotic} and~\ref{assump:linear_esn}, the out-of-sample risk decomposes as $
    \mathcal{R} = \mathcal{B}^2 + \mathcal{V}+\sigma^2$, where
\begin{equation*}
\mathbb{E}_{W,\boldsymbol{w}_{\mathrm{in}}}[\mathcal{B}^2] \to \frac{1}{1-\alpha}
\sum_{i=1}^{T} \frac{\kappa^2}{(\mu_i+\kappa)^2} \, \lVert \Theta_\ast^{\top} \Sigma_u^{1/2} v_i \rVert_2^2, 
\qquad
\mathbb{E}_{W,\mathbf{w}_{\mathrm{in}}}[\mathcal{V}] \to
\sigma^2  \frac{\alpha}{1-\alpha}, 
\end{equation*}
with
$\kappa := \lambda (1+\delta)$ and $
\alpha := \sum_{i=1}^T\frac{\mu_i}{N(\mu_i+\kappa)^2}.$
\end{theorem}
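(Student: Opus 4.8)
The plan is to obtain \Cref{thm:linear_esn} as a \emph{specialization} of \Cref{thm:asymp_expressions} to the linear ESN, where $Z=SU$ with $S$ as in \eqref{eq:S}, so that $\Sigma_{uz}=\Sigma_u S^{\top}$ and $\Sigma_z=S\Sigma_u S^{\top}$. I would first check the hypotheses of \Cref{thm:asymp_expressions}: conditionally on $(W,\boldsymbol w_{\mathrm{in}})$, the vector $\boldsymbol z=S\boldsymbol u$ is a $\|S\|$-Lipschitz image of the concentrated vector $\boldsymbol u$, hence concentrated in the sense of \Cref{def:concentrated}, and $\|\mathbb E[\boldsymbol z]\|_2\le\|S\|\,\|\mathbb E[\boldsymbol u]\|_2=O(1)$; both hold on the event $\{\|S\|=O(1)\}$, which has probability tending to $1$ once $S^{\top}S$ is controlled (below). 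On that event \Cref{thm:asymp_expressions} applies, and what remains is to simplify its deterministic equivalents. The key algebraic step is a push-through (Woodbury) identity collapsing the $n\times n$ resolvent $\bar Q=\big(\tfrac{1}{1+\delta}S\Sigma_u S^{\top}+\lambda I_n\big)^{-1}$ onto the $\le T$-dimensional range of $S$: with $B:=S\Sigma_u^{1/2}\in\mathbb R^{n\times T}$ and $G:=B^{\top}B=\Sigma_u^{1/2}S^{\top}S\,\Sigma_u^{1/2}$ (which shares its nonzero spectrum with $\Sigma_z=BB^{\top}$), one has $\Sigma_u^{1/2}S^{\top}\bar Q\,S\Sigma_u^{1/2}=B^{\top}\bar Q B=G\big(\tfrac{1}{1+\delta}G+\lambda I_T\big)^{-1}$. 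Diagonalizing $G=\sum_{i=1}^{T}\ell_i\,v_iv_i^{\top}$, every trace in $\mathcal B^2$ collapses to a scalar sum over the $\ell_i$; the three terms telescope (since $1-\tfrac{2\ell_i}{\ell_i+\kappa}+\tfrac{\ell_i^2}{(\ell_i+\kappa)^2}=\tfrac{\kappa^2}{(\ell_i+\kappa)^2}$) to $\tfrac{1}{1-\alpha}\sum_i\tfrac{\kappa^2}{(\ell_i+\kappa)^2}\,\lVert\Theta_\ast^{\top}\Sigma_u^{1/2}v_i\rVert_2^2$ with $\kappa:=\lambda(1+\delta)$, and $\delta=\tfrac1N\operatorname{Tr}(\Sigma_z\bar Q)$, $\alpha=\tfrac1N\operatorname{Tr}\big((\tfrac{1}{1+\delta}\Sigma_z\bar Q)^2\big)$ likewise reduce to scalar self-consistent expressions in the $\ell_i$ (using that $\tfrac{1}{1+\delta}\Sigma_z\bar Q$ has eigenvalues $\tfrac{\ell_i}{\ell_i+\kappa}$ on the range of $\Sigma_z$ and $0$ off it).

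The substantive step is to identify the limit of $S^{\top}S$. Its $(i,j)$ entry equals the quadratic form $\boldsymbol w_{\mathrm{in}}^{\top}(W^{T-i})^{\top}W^{T-j}\boldsymbol w_{\mathrm{in}}$; conditioning on $W$ and applying the Hanson--Wright inequality to the $\mathcal N(0,\tfrac1n I_n)$ vector $\boldsymbol w_{\mathrm{in}}$, this concentrates around $\tfrac1n\operatorname{Tr}\big((W^{T-i})^{\top}W^{T-j}\big)$. Plugging in $W=W_0/(\varphi\,\rho(W_0))$ with the Ginibre edge law $\rho(W_0)=\sqrt n\,(1+o(1))$, the diagonal terms reduce, up to the $\varphi$-dependent scalar, to $n^{-(T-i)-1}\lVert W_0^{T-i}\rVert_F^2$, which the moment identity $\mathbb E\lVert W_0^{k}\rVert_F^2=n^{k+1}(1+o(1))$ (the coincident-walk, genus-zero contribution) sends to a constant; the off-diagonal terms reduce to $\tfrac1n\operatorname{Tr}\big((W_0^{\top})^{a}W_0^{b}\big)$ with $a\ne b$, which is one power of $n$ below the balanced ($a=b$) case and hence vanishes after normalization. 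Upgrading these expectation-level facts to high-probability statements by standard concentration for polynomials in Gaussian entries, uniformly over the finitely many powers $k\le T$, gives $\lVert S^{\top}S-D\rVert\to 0$ for the diagonal matrix $D=\operatorname{diag}(\varphi^{i-T})_{1\le i\le T}$ of the statement; consequently $G\to\Sigma_u^{1/2}D\,\Sigma_u^{1/2}$ and the eigenpairs $(\ell_i,v_i)$ converge to the $(\mu_i,\boldsymbol v_i)$ there.

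To conclude I would substitute $(\mu_i,\boldsymbol v_i)$ for $(\ell_i,v_i)$ in the reduced expressions of the first step and take expectations over $(W,\boldsymbol w_{\mathrm{in}})$: since the in-probability convergences above hold with uniformly bounded quantities (on the high-probability event where $\|S\|$ is bounded, with negligible contribution outside it), they pass to convergence in expectation, yielding $\mathbb E_{W,\boldsymbol w_{\mathrm{in}}}[\mathcal B^2]\to\tfrac{1}{1-\alpha}\sum_i\tfrac{\kappa^2}{(\mu_i+\kappa)^2}\,\lVert\Theta_\ast^{\top}\Sigma_u^{1/2}v_i\rVert_2^2$ and $\mathbb E_{W,\boldsymbol w_{\mathrm{in}}}[\mathcal V]\to\sigma^2\tfrac{\alpha}{1-\alpha}$, with $\kappa=\lambda(1+\delta)$ and $\alpha,\delta$ the solutions of the corresponding scalar self-consistent relations. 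I expect the main obstacle to be the non-Hermitian moment combinatorics: products $W_0^{a}$ and $(W_0^{\top})^{b}$ with $a\ne b$ are not covered by the usual single-matrix trace-moment asymptotics, so the genus expansion for mixed words in $W_0$ and $W_0^{\top}$ must be carried out carefully, both to fix the exact leading constant on the diagonal and to establish the strict order loss off it; this must moreover be coupled with enough concentration to guarantee $\|S\|=O(1)$ with high probability, which is exactly what makes \Cref{thm:asymp_expressions} applicable in the first place.
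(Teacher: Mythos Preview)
Your proposal is correct and follows essentially the same route as the paper: apply \Cref{thm:asymp_expressions} conditionally on $(W,\boldsymbol w_{\mathrm{in}})$, push through via Woodbury to the $T\times T$ problem in $G=\Sigma_u^{1/2}S^{\top}S\,\Sigma_u^{1/2}$, and identify the limit of $S^{\top}S$ by Hanson--Wright on $\boldsymbol w_{\mathrm{in}}$ together with mixed-moment asymptotics for Ginibre matrices (the paper cites a result on non-crossing pairings for precisely the fact $\tfrac1n\operatorname{Tr}\big((W_0^{\top}/\sqrt n)^{a}(W_0/\sqrt n)^{b}\big)\to\delta_{ab}$ that you anticipate needing). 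The one place where the paper is more explicit than your sketch is the final passage to $\mathbb E_{W,\boldsymbol w_{\mathrm{in}}}$: rather than arguing via uniform boundedness on a high-probability event, it proves directly that $\mathcal B_\infty^2$ is globally Lipschitz in $M=\tilde S^{\top}\tilde S$ with constant $O(1)$ (through a chain of lemmas controlling the Lipschitz constants and uniform bounds of $\delta(M)$, $A(M)=M(\tfrac{M}{1+\delta}+\lambda I_T)^{-1}$, and $(1-\alpha(M))^{-1}$), which combined with $\lVert\tilde S^{\top}\tilde S-\mathbb E[\tilde S^{\top}\tilde S]\rVert_F=O_{\mathbb P}(n^{-1/2})$ yields the conclusion without any truncation to good events.
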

\begin{proof}
See Appendix \ref{proof:th2}.  
We first show that $S^\top S$ is concentrated, and then compute the limit $M_\infty$ of its expectation using random matrix theory. Next, we prove that $\mathcal{R}$, viewed as a function of $S^\top S$, is Lipschitz with constant of order $O(1)$. This allows us to replace, in the limit, $\mathbb{E}[\mathcal{R}(S^\top S)]$ with $\mathcal{R}(M_\infty)$.
\end{proof}

\begin{remark}[Difference with Ridge regression and Interpretation]\label{remark:ESN_interpretation} In the case of ridge regression ($\Sigma_u = \Sigma_{uz} = \Sigma_z$), we obtain the same expression but the same expression with $(\mu_i, \boldsymbol{v_i})$ the eigenvalue–eigenvector pairs of
$\Sigma_u$ instead of $
\Sigma_u^{1/2} \, \operatorname{diag}(\varphi^{i-T})_{1 \leq i \leq T} \, \Sigma_u^{1/2}.$ Which proves that Linear ESN is equivalent to ridge regression but with exponentially time-weighted version covariance, which down-weights older inputs. This induces a short-memory bias, distorting the input statistics and discarding part of the long-term information.
\end{remark}
\begin{remark}[Optimal Regularization]
The asymptotic risk depends on the scalar $\kappa = \lambda (1+\delta)$, which can be optimized as a real-valued function independently of the fixed point $\delta$ (which itself depends on $\lambda$). Once the optimal $\kappa$ is found, the corresponding $\delta$ can be recovered from
$\frac{\delta}{1+\delta} = \frac{1}{N} \sum_{i=1}^T \frac{\mu_i}{\mu_i + \kappa}.$
This then yields the optimal $\lambda$.  

In general, there is no closed-form expression for $\lambda^\star$ for arbitrary $\Sigma_u$. However, when $\Sigma_u = I_T$, we obtain
$
\lambda^\star = \frac{T}{N} \cdot \mathrm{SNR}, 
\quad \mathrm{SNR} = \frac{\sum_{i=1}^T \|\Theta^{\ast\top} \mathbf{v_i}\|^2}{\sigma^2}.
$
\end{remark}

\section{Experiments: Comparing Ridge Regression and Linear ESN}
\label{sec:experiments}
\subsection{Double Descent}
\label{sec:double_descent}
The \emph{double descent} phenomenon describes how the test error decreases, peaks near the interpolation threshold, and then decreases again as model complexity grows. This behavior is well established for linear predictors, both empirically and theoretically.
\paragraph{Key mechanism.}
The factor $(1 - \alpha)^{-1}$ in \Cref{thm:asymp_expressions} diverges as $\alpha$ tends to $1$.

Recall that
\[
\alpha \;=\; \frac{1}{N} \sum_{i=1}^T 
    \frac{\mu_i^2}{\left( \mu_i + \lambda(1+\delta) \right)^2}.
\]
Double descent is typically observed for $\lambda$ close to $0$ \cite{mei2020generalizationerrorrandomfeatures}. In this case,
\[
\alpha 
    \xrightarrow[\lambda \to 0]{} 
    \frac{1}{N} \sum_{i=1}^r 1
    \;=\; \frac{r}{N},
\]
where $r$ is the rank of
$
\Sigma_u^{1/2} \, \operatorname{diag}(\varphi^{i-T})_{1 \leq i \leq T} \, \Sigma_u^{1/2}.$
In classical ridge regression with $\varphi = 1$, the feature covariance matrix is full rank, and double descent occurs when $N = T$. In contrast, for a Linear ESN, the matrix $\Sigma_u^{1/2} \, \operatorname{diag}(\varphi^{i-T})_{1 \leq i \leq T} \, \Sigma_u^{1/2}$ is low rank, since $\varphi^{i-T}$ rapidly decays to $0$ in numerical computations, and thus double descent is not observed as show in \Cref{fig:double_descent}.

\begin{figure}[H]
    \centering
    \includegraphics[width=0.8\textwidth]{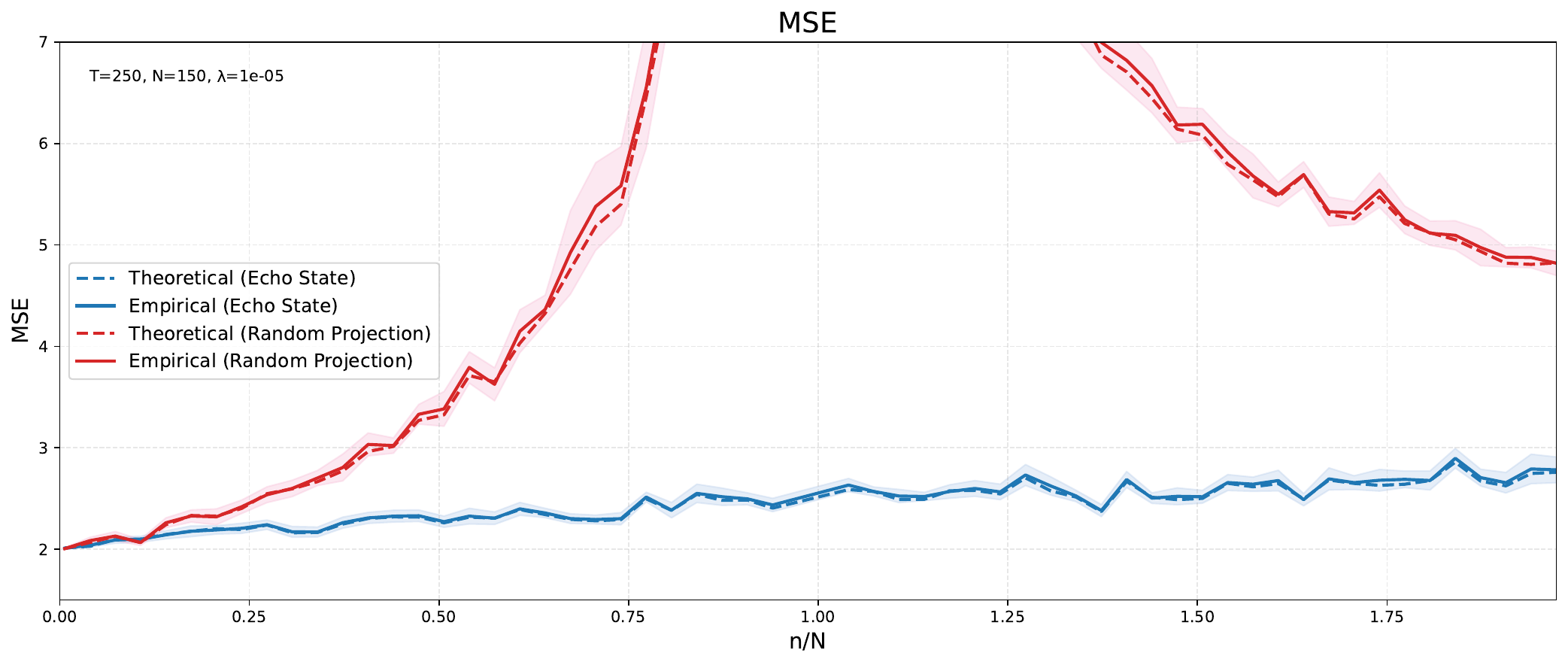}
    \caption{
Comparison of test error curves for Linear ESNs and random projections as a function of  $n/N$ . For random projections, the test error exhibits a clear double descent peak near the interpolation threshold ($n/N \approx 1$). In contrast, Linear ESNs show no such peak due to their effective low-rank feature covariance, as explained above.
    }
    \label{fig:double_descent}
\end{figure}

\subsection{Comparing ESNs and Ridge Regression}

The \Cref{remark:ESN_interpretation} indicates that Echo State Networks (ESNs) could surpass Ridge regression in scenarios characterized by \emph{limited data and short temporal dependencies}. 
This advantage arises from the inherent inductive bias of ESNs: they assume that the relevant information is encoded in the recent history of the input, effectively emphasizing short-term temporal correlations.

Ridge regression, when applied directly to raw input vectors or generic features, lacks this temporal structure. In low-data regimes, this absence of an inductive bias often leads to overfitting or suboptimal generalization. ESNs, in contrast, implicitly filter past inputs through their recurrent reservoir, efficiently capturing patterns from the recent past and making better use of scarce data.

\begin{figure}[H]
    \centering
    \includegraphics[width=0.7\textwidth,height=0.25\textheight]{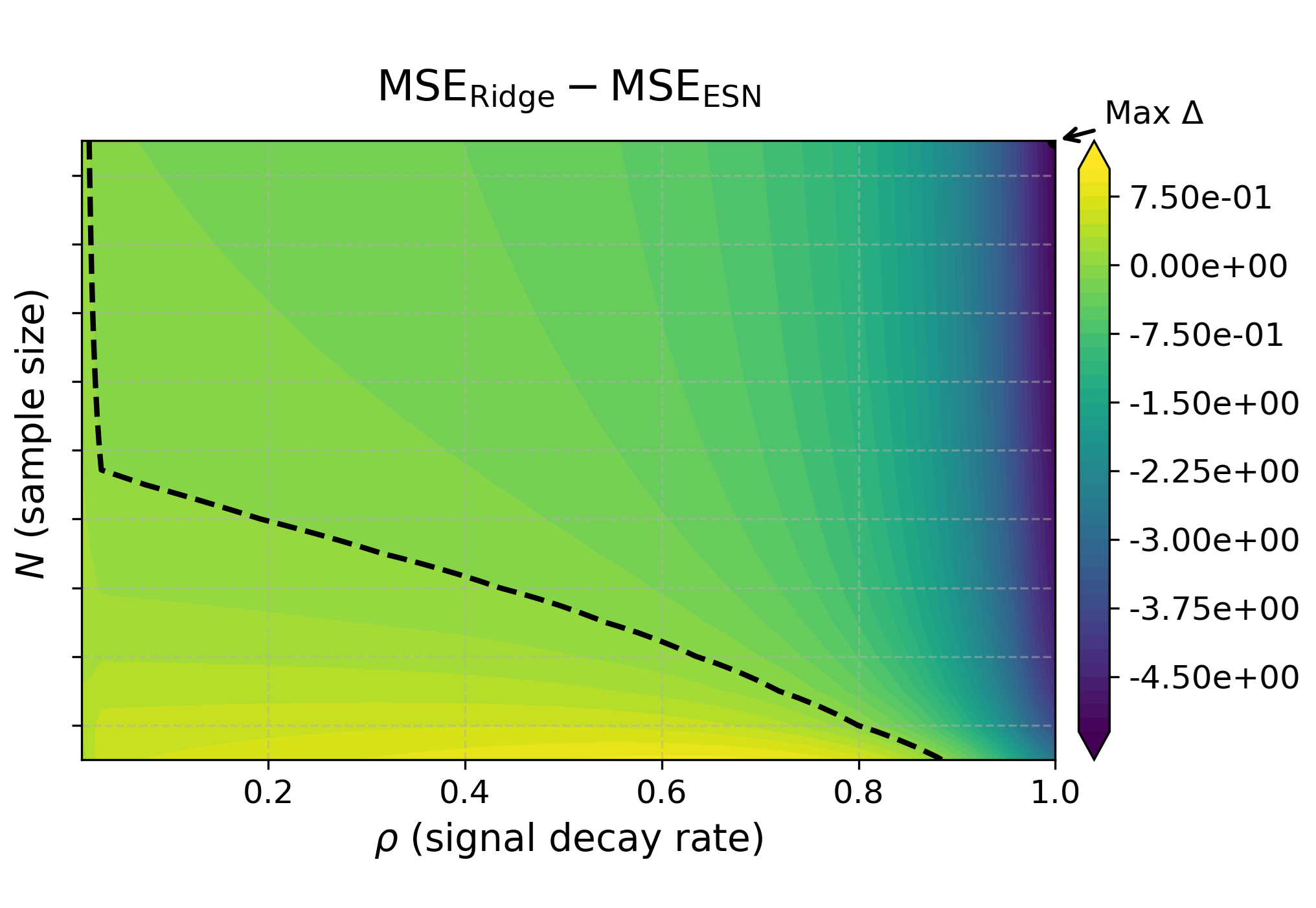}
\caption{
Comparison between ESNs and ridge regression.
The test error is shown as a function of the sample size $N$ (y-axis) and the signal decay rate $\rho$ (x-axis), where $\boldsymbol{\theta}^\star = (\rho^t,\, t \in \{1,\dots,T\})$ for both models. Shaded regions indicate the standard deviation over multiple runs. The dashed line marks the separation threshold. ESNs exhibit superior performance in the limited-data, short-memory regime.
}

    \label{fig:synthetic_comparison}
\end{figure}

In \Cref{fig:synthetic_comparison}, we confirm this intuition: ESNs consistently outperform Ridge regression when $T/N$ is small and when target function depends predominantly on short-term dependencies (small signal decay rate $\rho$).
In this regime, the exponential time-weighting of ESNs effectively acts as a regularizer, improving generalization by attenuating the influence of distant, noisy inputs.
However, as $N$ grows, Ridge regression eventually surpasses ESNs, since, as discussed in \Cref{remark:ESN_interpretation}, the ESNs inevitably discards part of the long-term information, which becomes increasingly useful when sufficient samples are available.
\section{Conclusion}
\label{sec:conclusion}
We introduced a general high-dimensional theory for ridge regression on arbitrary fixed representations, unifying and extending prior analyses of random features, pretrained models, and recurrent architectures. Applied to Echo State Networks, our framework yields simple closed-form predictions that match experiments, reveal their temporal inductive bias, and explain their absence of double descent. Our results show that Echo State Networks can outperform ridge regression in low-data, short-memory regimes due to their built-in temporal inductive bias, while ridge becomes advantageous as the sample size grows. 
\bibliography{neurips_2024.bib}
\newpage
\appendix
\section{Proof of theorem 1}\label{subsec:proof_of_lem_bias-variance_decomp}
In this section, we provide the proof of \Cref{thm:asymp_expressions}.
\subsection{Decomposition}
Recall from \Cref{eq:model} that 
We consider the standard linear model:
\begin{equation*}
    Y = \Theta_\ast^{\top} U  + E,
\end{equation*}
where \( \Theta_\ast \in \mathbb{R}^{T \times q} \), \( U := [\mathbf{u_1}, \dots, \mathbf{u_N}] \in \mathbb{R}^{T \times N}\), \( Y :=  [\mathbf{y_1}, \dots, \mathbf{y_N}] \in \mathbb{R}^{q \times N} \), and \( E \in \mathbb{R}^{q \times N} \) has i.i.d. entries with mean zero and variance \( \sigma^2 \). The noise \( E \) is assumed to be independent of \( U \).

We are interested in the generalization performance of Echo State Network (ESN), which we study through its \textit{out-of-sample risk}. For a new, independent test pair \( (\mathbf{u'}, \mathbf{y'}) \), where $\mathbf{\hat{y'}}$ is the prediction, the risk is defined as:
\begin{equation*}
    \mathcal{R} := \frac{1}{q}\mathbb{E}\left[ \left\| \mathbf{y'} - \mathbf{\hat{y'}} \right\|_2^2 \right].
\end{equation*}
where the expectation is taken on everything that is random (from train and test).\\
Substituting the model \( \mathbf{y'} = \Theta_\ast^{\top} \mathbf{u'} + \mathbf{\epsilon'} \), we obtain:
\begin{equation*}
    \mathcal{R} = \frac{1}{q} \mathbb{E}\left[ \left\| \Theta_\ast^{\top} \mathbf{u'} - \mathbf{\hat{y'}} \right\|_2^2 \right] +  \sigma^2.
\end{equation*}
We define the excess risk as  \( \tilde{\mathcal{R}} := \mathcal{R} - \sigma^2 \). Letting \( \Sigma_u := \mathbb{E}[\mathbf{u u}^\top] \in \mathbb{R}^{T \times T} \), we have:
\begin{equation*}
   q\tilde{\mathcal{R}} = \operatorname{Tr}(\Theta_\ast^{\top} \Sigma_u \Theta_\ast) - 2\, \mathbb{E}\left[ \operatorname{Tr}(\mathbf{\hat{y'}  u'}^\top \Theta^{\ast}) \right] + \mathbb{E}\left[ \left\| \mathbf{\hat{y'}} \right\|_2^2 \right].
\end{equation*}

In the ESN framework, predictions are obtained via:
\[
\mathbf{\hat{y'}} = \hat{W}_{\rm out} \mathbf{z'},
\]
where \( \mathbf{z'} \in \mathbb{R}^n \) is the final reservoir state computed from \( \mathbf{u'} \), and the output weights \( \hat{W}_{\rm out} \in \mathbb{R}^{q \times n} \) are estimated using ridge regression: 
\begin{equation*}
    \hat{W}_{\rm out} = \frac{1}{N} Y Z^\top \left( \frac{1}{N} Z Z^\top + \lambda I_n \right)^{-1} ,
\end{equation*}
with \( Z = [\mathbf{z_1}, \dots, \mathbf{z_N}] \in \mathbb{R}^{n \times N} \) and \( Y = [\mathbf{y_1}, \dots, \mathbf{y_N}] \in \mathbb{R}^{q \times N} \).

We define the resolvent $Q:= \left(\frac{1}{N} Z Z^\top + \lambda I_n \right)^{-1}$. The excess risk becomes:
\begin{equation*}
\begin{aligned}
q\tilde{\mathcal{R}} =\ & \operatorname{Tr}(\Theta_\ast^{\top} \Sigma_u \Theta_\ast) 
 - 2\, \mathbb{E} \left[ \operatorname{Tr} \left(  \frac{1}{N} Y Z^\top
Q  \mathbf{z'}   \mathbf{u'}^\top \Theta^{\ast}
\right) \right] 
 + \mathbb{E}\left[
\left\|  \frac{1}{N} Y Z^\top  Q  \mathbf{z'} \right\|_2^2
\right].
\end{aligned}
\end{equation*}

Substituting $Y =  \Theta_\ast^{\top} U + E$ yields:
\begin{equation*}
\begin{aligned}
q\tilde{\mathcal{R}} =\ & \operatorname{Tr}(\Theta_\ast^{\top} \Sigma_u \Theta_\ast)  - 2\, \mathbb{E} \left[ \operatorname{Tr} \left(  \frac{1}{N}  ( \Theta_\ast^{\top} U + E) Z^\top 
Q 
 \mathbf{z'  u'}^\top \Theta^{\ast}
\right) \right]  \\
&+ \mathbb{E} \left[
\left\|  \frac{1}{N} ( \Theta_\ast^{\top} U + E) Z^\top Q
 \mathbf{z'} \right\|_2^2
\right].
\end{aligned}
\end{equation*}

We expand the second and third terms:
\begin{equation*}
\begin{aligned}
q\tilde{\mathcal{R}} =\ & \operatorname{Tr}(\Theta_\ast^{\top} \Sigma_u \Theta_\ast) \\
& - 2\, \mathbb{E} \left[ \operatorname{Tr} \left(  \frac{1}{N}   \Theta_\ast^{\top} U Z^\top 
Q
 \mathbf{z'  u'}^\top \Theta^{\ast}
\right) \right] 
 - 2\, \mathbb{E} \left[ \operatorname{Tr} \left(  \frac{1}{N} E Z^\top
Q
 \mathbf{z'  u'}^\top \Theta^{\ast}
\right) \right] \\
& + \mathbb{E} \left[
\left\|  \frac{1}{N}  \Theta^{\ast \top} U  Z^\top 
Q
 \mathbf{z'} 
\right\|_2^2
\right]  + \mathbb{E} \left[
\left\|  \frac{1}{N} E Z^\top 
Q
 \mathbf{z'} 
\right\|_2^2
\right] \\
& + 2\, \mathbb{E} \left[
\left\langle 
\frac{1}{N}  \Theta^{\ast \top} U  Z^\top 
Q \mathbf{z'},\ 
 \frac{1}{N} E Z^\top  
Q \mathbf{z'} 
\right\rangle
\right]
\end{aligned}
\end{equation*}

By independence of \( E \) and \( \mathbf{u'} \), and zero-mean noise, the cross terms vanish in expectation:
\[
\mathbb{E} \left[ 
\operatorname{Tr} \left( 
 \frac{1}{N}  E Z^\top Q \mathbf{z' u'}^\top  \Theta^{\ast} 
\right) 
\right] = 0, \quad 
\mathbb{E} \left[
\left\langle 
 \frac{1}{N} \Theta_\ast^{\top} U Z^\top Q \mathbf{z'}  
,\ 
\frac{1}{N} E Z^\top  Q \mathbf{z'}
\right\rangle
\right]= 0.
\]

Hence, the excess risk simplifies to:
\begin{equation*}
\begin{aligned}
q\tilde{\mathcal{R}} =\ & \operatorname{Tr}(\Theta^{\ast \top} \Sigma_u \Theta_\ast)  - 2\, \mathbb{E} \left[ \operatorname{Tr} \left( \frac{1}{N} \Theta_\ast^{\top} U Z^\top
Q
 \mathbf{z'  u'}^\top \Theta^{\ast}
\right) \right] \\
& + \mathbb{E} \left[
\left\| \frac{1}{N} \Theta_\ast^{\top} U Z^\top
Q
 \mathbf{z'}
\right\|_2^2
\right] + \mathbb{E} \left[
\left\|  \frac{1}{N} E Z^\top 
Q 
 \mathbf{z'} 
\right\|_2^2
\right].
\end{aligned}
\end{equation*}

Let $\Sigma_{uz} := \mathbb{E}[\mathbf{uz}^\top]$ and $\Sigma_z := \mathbb{E}[\mathbf{zz}^\top]$. We get the decomposition of the excess risk:
\begin{equation*}
\begin{aligned}
q\tilde{\mathcal{R}} =\ 
& \underbrace{
\operatorname{Tr}(\Theta_\ast^{\top} \Sigma_u \Theta_\ast) 
- 2\, \mathbb{E}\left[ \operatorname{Tr} \left(  \frac{1}{N}\Theta^{\ast \top} U  Z^\top 
Q
\Sigma_{uz}^\top \Theta^{\ast}
\right) \right]+
\mathbb{E} \left[
\operatorname{Tr}\left(  \frac{1}{N}\Theta^{\ast \top} U 
Q \Sigma_z   Q U^\top \Theta_\ast 
\right)
\right]
}_{q\mathcal{B}^2} \\
& \quad
+ q \; \underbrace{
\mathbb{E} \left[
\sigma^2 \left(  
 Q \frac{Z^\top Z}{N^2} Q \Sigma_z
\right) 
\right]
}_{\mathcal{V}}.
\end{aligned}
\end{equation*}

\subsection{Asymptotic regime}

\subsubsection{Bias term}
\paragraph{Second Term}
We are interested in finding the limit of $\mathbb{E}\left[ \operatorname{Tr} \left(  \frac{1}{N}\Theta^{\ast \top} U  Z^\top 
Q
\Sigma_{uz}^\top \Theta^{\ast}
\right) \right]$, as both the number of reservoirs $n$ and the number of samples $N$ tend to infinity proportionally (or equivalently, to determine the deterministic equivalent of  $\frac{1}{N} U Z^\top  Q$, in the language of random matrix theory).

This proof follows the structure of arguments in random matrix theory. We provide the complete proof here and refer back to it for the other terms. We begin by disentangling the resolvent $Q$ from the other random variables using Sherman-Morrison's identities (Lemma \ref{lemm:Schur}). We then apply the concentration of quadratic forms to approximate them by their expectations. Finally, we use known deterministic equivalents for the resolvents to conclude. 

\paragraph{Step 1: Disentangling dependence}
Writing $U Z^\top = \sum_{i=1}^N\mathbf{u_i z_i}^\top$ we have:
\begin{equation*}
    \mathbb{E}\left[ \operatorname{Tr} \left(  \frac{1}{N}\Theta^{\ast \top} U  Z^\top 
Q
\Sigma_{uz}^\top \Theta^{\ast}
\right) \right] =  \frac{1}{N}\sum_{i=1}^N
\mathbb{E}\left[ \operatorname{Tr} \left(   \Theta_\ast^{\top} \mathbf{u_i z_i}^\top Q  \Sigma_{uz}^\top \Theta^{\ast}\right) \right]
\end{equation*}
The random variable $(\mathbf{u_i},\mathbf{z_i})$ depends on $Q$, so we use the following lemma to decouple their dependence.
\begin{lemma}\label{lemm:Schur}[Sherman-Morrison Identities]
Let \( Q_{-i} := \left( \frac{1}{N} \sum_{\substack{j=1, j \ne i}}^N \mathbf{z_j z_j}^\top + \lambda I_n \right)^{-1} \) denote the resolvent with the \( i \)-th vector \( \mathbf{z_i} \) removed. Then, the following identities hold:
\begin{equation*}\label{eq:schur1}
    Q = Q_{-i} - \frac{1}{N} \cdot \frac{ Q_{-i} \mathbf{z_i z_i}^\top Q_{-i} }{ 1 + \frac{1}{N} \mathbf{z_i}^\top Q_{-i} \mathbf{z_i} }
\end{equation*}   
\begin{equation*}\label{eq:schur2}
Q\mathbf{z_i} = \frac{Q_{-i}\mathbf{z_i}}{1+\frac{1}{N}\mathbf{z_i}^\top Q_{-i}\mathbf{z_i}}\end{equation*}
\end{lemma}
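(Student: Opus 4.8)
The plan is to derive both identities from the classical Sherman--Morrison rank-one update formula applied to $Q^{-1}$. The key starting observation is that deleting the $i$-th sample from the empirical covariance is exactly a rank-one perturbation: by definition $Q^{-1} = Q_{-i}^{-1} + \frac{1}{N}\mathbf{z_i z_i}^\top$. Since $\lambda > 0$, both $Q_{-i}^{-1}$ and $Q^{-1}$ are positive definite, hence invertible, and the scalar $1 + \frac{1}{N}\mathbf{z_i}^\top Q_{-i}\mathbf{z_i}$ is bounded below by $1$, so all the expressions below are well defined.

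First I would recall (or verify in one line) the Sherman--Morrison identity: for invertible $A$ and vectors $u,v$ with $1 + v^\top A^{-1}u \neq 0$,
\[
(A + uv^\top)^{-1} = A^{-1} - \frac{A^{-1} u v^\top A^{-1}}{1 + v^\top A^{-1} u},
\]
which is checked directly by multiplying the right-hand side by $A + uv^\top$; the two resulting rank-one terms combine to cancel, leaving $I$. Applying this with $A = Q_{-i}^{-1}$, $u = \frac{1}{N}\mathbf{z_i}$, $v = \mathbf{z_i}$ (so that $A^{-1} = Q_{-i}$) yields the first identity at once, after extracting the factor $\frac{1}{N}$ from the numerator. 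For the second identity I would right-multiply the first by $\mathbf{z_i}$ and use that $\mathbf{z_i}^\top Q_{-i}\mathbf{z_i}$ is a scalar: writing $a := \frac{1}{N}\mathbf{z_i}^\top Q_{-i}\mathbf{z_i}$, the perturbation term becomes $\frac{1}{N}\,Q_{-i}\mathbf{z_i}\,(\mathbf{z_i}^\top Q_{-i}\mathbf{z_i})/(1+a) = \frac{a}{1+a}\,Q_{-i}\mathbf{z_i}$, so $Q\mathbf{z_i} = Q_{-i}\mathbf{z_i} - \frac{a}{1+a}Q_{-i}\mathbf{z_i} = \frac{1}{1+a}\,Q_{-i}\mathbf{z_i}$, which is the claimed formula.

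Every step here is routine linear algebra, so I do not expect any genuine obstacle; the only point requiring a word of care is the non-degeneracy of the denominator $1 + \frac{1}{N}\mathbf{z_i}^\top Q_{-i}\mathbf{z_i}$, which is immediate from $\lambda > 0$ and positive-definiteness of $Q_{-i}$. The payoff is that these identities let one replace $Q$ by the independent copy $Q_{-i}$ inside expectations, which is precisely what is needed before invoking concentration of quadratic forms on $\mathbf{z_i}^\top Q_{-i}\mathbf{z_i}$ and related terms in the subsequent steps.
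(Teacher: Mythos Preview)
Your derivation is correct and is exactly the standard route: write $Q^{-1}=Q_{-i}^{-1}+\tfrac{1}{N}\mathbf{z_i z_i}^\top$, apply the Sherman--Morrison rank-one update, then right-multiply by $\mathbf{z_i}$ and simplify the scalar. The paper does not actually prove this lemma; it states it as a known identity and uses it, so there is nothing to compare against beyond noting that your argument is the classical one.
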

We use the above equation \ref{eq:schur2}:
\begin{equation*}
\begin{aligned}
    \frac{1}{N}\sum_{i=1}^N
\mathbb{E}\left[ \operatorname{Tr} \left(   \Theta_\ast^{\top} \mathbf{u_i z_i}^\top Q  \Sigma_{uz}^\top \Theta^{\ast}\right) \right] &=  \frac{1}{N}\sum_{i=1}^N
\mathbb{E}\left[ \operatorname{Tr} \left(   \Theta_\ast^{\top} \frac{\mathbf{u_i} \mathbf{z_i}^\top Q_{-i}}{1+\frac{1}{N}\mathbf{z_i}^\top Q_{-i}\mathbf{z_i}} \Sigma_{uz}^\top \Theta^{\ast}\right) \right] 
\end{aligned}
\end{equation*}
We now have that $\mathbf{u_i}$ and $\mathbf{z_i}$ are independent of $Q_{-i}.$
\paragraph{Step 2: Concentration}
The term $\frac{1}{N}\mathbf{z_i}^\top Q_{-i}\mathbf{z_i}$ is still random and depends on $\mathbf{z_i}$, but we will approximate it by its expectation using a concentration argument.

Before proceeding, let us recall some useful lemmas on concentrated random variables and sub-Gaussian random variables which will be used in the proofs.
\begin{definition}[Sub-Gaussian and Sub-exponential random variables]%
\label{def:tail-subg-subexp}
A centered real random variable $x$ is said to be
\emph{$K$-sub-Gaussian} if
\[
      \mathbb{P}\!\bigl(|x|\ge t\bigr)\;\le\;2\exp\!\left(-\frac{t^{2}}{2K^{2}}\right)
      \qquad\forall\,t\ge0.
\]
This definition extends to a random vector $\mathbf{z} \in \mathbb{R}^n$ if, for all unit vector $\mathbf{u}$,$\mathbf{u^\top z}$ is $K$-sub-Gaussian.

A centered real random variable $x$ is called \emph{$(\nu,b)$-sub-exponential} if
\[
      \mathbb{P}\!\bigl(|x|\ge t\bigr)\;\le\;2\exp\!\left[-\,
      \min\!\left(\frac{t^{2}}{2\nu^{2}},\frac{t}{2b}\right)\right]
      \qquad\forall\,t\ge0.
\]
\end{definition}

\begin{lemma}[Basic facts on sub-Gaussian and sub-exponential variables]%
\label{lem:subg-subexp}
Throughout, $\lesssim$ hides an absolute constant. We have the following results.
\begin{enumerate}
  \item[\textup{(i)}] $ \mathbf{z}\in\mathbb{R}^{n}$ is concentrated $\Longrightarrow\; \mathbf{z}$ is sub-Gaussian with a constant independent of $n$.

  \item[\textup{(ii)}] \textbf{Moments of a sub-Gaussian vector.} If $ \mathbf{z}\in\mathbb{R}^{n}$ is $K$-sub-Gaussian, then for every integer $k\ge1$
  \[
      \mathbb{E}\|\mathbf{z}\|_{2}^{k} \;\lesssim\; (K\sqrt n)^{\,k}.
  \]

  \item[\textup{(iii)}] \textbf{Moments of a sub-exponential variable.} Suppose $x  \in \mathbb{R}$ is $(\nu,b)$-sub-exponential with $K=\max(\nu,b)$. Then for every integer $k\ge1$
  \[
      \mathbb{E}|x|^{k} \;\lesssim\; (K k\sqrt n)^{\,k}.
  \]

  \item[\textup{(iv)}] \textbf{Maximum of sub-exponential variables.} Let $\{x_i\}_{i=1}^{m}$ be independent $(\nu,b)$-sub-exponential variables with $K=\max(\nu,b)$. Then for every integer $ k\ge1$
  \[
      \mathbb{E}\left[\max_{1\le i\le m}|x_i|^{k}\right] \;\lesssim\; (K k\log m)^{\,k}.
  \]
\end{enumerate}
\end{lemma}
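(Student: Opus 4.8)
The four claims are standard moment/tail estimates, and my plan is to prove all of them through the single unifying device of \emph{tail integration}, $\mathbb{E}|x|^{k}=\int_{0}^{\infty} k\,t^{k-1}\,\mathbb{P}(|x|\ge t)\,dt$, fed by the concentration hypotheses of Definition~\ref{def:concentrated} and Definition~\ref{def:tail-subg-subexp}. For claim (i), I would fix an arbitrary unit vector $\mathbf{u}$ and apply the concentration hypothesis to the $1$-Lipschitz map $f(\mathbf{x})=\mathbf{u}^{\top}\mathbf{x}$ (it is $1$-Lipschitz since $|\mathbf{u}^{\top}(\mathbf{x}-\mathbf{x}')|\le\|\mathbf{x}-\mathbf{x}'\|_{2}$), obtaining $\mathbb{P}(|\mathbf{u}^{\top}\mathbf{z}-\mathbb{E}[\mathbf{u}^{\top}\mathbf{z}]|\ge t)\le C e^{-c t^{2}}$. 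The only subtlety is that the prefactor $C$ need not equal $2$; I would resolve this by passing through moments, integrating the tail to get $\mathbb{E}|\mathbf{u}^{\top}\mathbf{z}-\mathbb{E}[\mathbf{u}^{\top}\mathbf{z}]|^{k}\le \tfrac{Ck}{2}\,c^{-k/2}\Gamma(k/2)\lesssim (K\sqrt{k})^{k}$, and then invoking the moment characterization of sub-Gaussianity to conclude that $\mathbf{u}^{\top}\mathbf{z}$ is $K$-sub-Gaussian. The key point for the statement is that $K$ is a function of $C$ and $c$ alone, both independent of $n$ by the concentration definition, so the sub-Gaussian constant is $n$-independent.

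For claim (ii), I would write $\|\mathbf{z}\|_{2}^{k}=\bigl(\sum_{j=1}^{n} z_{j}^{2}\bigr)^{k/2}$ and, for $k\ge 2$, use convexity of $t\mapsto t^{k/2}$ (the power-mean inequality) to get $\|\mathbf{z}\|_{2}^{k}\le n^{k/2-1}\sum_{j} |z_{j}|^{k}$. Each coordinate $z_{j}=\mathbf{e}_{j}^{\top}\mathbf{z}$ is $K$-sub-Gaussian, so $\mathbb{E}|z_{j}|^{k}\lesssim (K\sqrt{k})^{k}$; summing over the $n$ coordinates yields $\mathbb{E}\|\mathbf{z}\|_{2}^{k}\lesssim (K\sqrt{n})^{k}$ for each fixed $k$ (the $\sqrt{k}$ from the coordinate moments is absorbed into the constant for fixed $k$). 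The case $k=1$ follows from $k=2$ by Jensen. When $\mathbf{z}$ is in fact \emph{concentrated}, I would also note the cleaner route: $\mathbf{z}\mapsto\|\mathbf{z}\|_{2}$ is $1$-Lipschitz, so $\|\mathbf{z}\|_{2}$ concentrates around $\mathbb{E}\|\mathbf{z}\|_{2}\le(\mathbb{E}\|\mathbf{z}\|_{2}^{2})^{1/2}\lesssim K\sqrt{n}$, which removes the $\sqrt{k}$ dependence entirely.

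For claim (iii), tail integration gives $\mathbb{E}|x|^{k}\le 2\int_{0}^{\infty} k\,t^{k-1}\exp\!\bigl(-\min(t^{2}/2\nu^{2},\,t/2b)\bigr)\,dt$, and I would split the integral at the crossover $t^{\star}=\nu^{2}/b$ where the two exponents coincide. On $[0,t^{\star}]$ the Gaussian branch $e^{-t^{2}/2\nu^{2}}$ contributes $\lesssim (\nu\sqrt{k})^{k}$ (via $\Gamma(k/2)$), while on $[t^{\star},\infty)$ the exponential branch $e^{-t/2b}$ contributes $\lesssim (bk)^{k}$ (via $\Gamma(k)$ and Stirling, which supplies the extra $k$ factor characteristic of the sub-exponential regime). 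Combining and using $K=\max(\nu,b)$ gives the sharp bound $\mathbb{E}|x|^{k}\lesssim (Kk)^{k}$; since $\sqrt{n}\ge 1$, this implies the stated $(Kk\sqrt{n})^{k}$ a fortiori, so I would simply prove the tighter scalar bound and remark that the $\sqrt{n}$ factor is a harmless over-estimate for a scalar.

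Claim (iv) is where I expect the main difficulty. Here a naive bound $\mathbb{E}[\max_{i}|x_{i}|^{k}]\le\sum_{i}\mathbb{E}|x_{i}|^{k}\lesssim m(Kk)^{k}$ is too weak, since for small $k$ the target $(Kk\log m)^{k}$ is the stronger statement; extracting $\log m$ rather than $m$ is the crux. The plan is a union bound, $\mathbb{P}(\max_{i}|x_{i}|\ge t)\le 2m\exp(-\min(t^{2}/2\nu^{2},t/2b))$, followed by threshold-split tail integration: choose $t_{0}\asymp Kk\log m$ large enough that $2m\,e^{-t_{0}/2b}\le 1$ and that the tail integral over $[t_{0},\infty)$ is dominated by $t_{0}^{k}$, while on $[0,t_{0}]$ I bound the probability by $1$. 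This yields $\mathbb{E}[\max_{i}|x_{i}|^{k}]\le t_{0}^{k}+(\text{lower order})\lesssim (Kk\log m)^{k}$. The delicate bookkeeping is verifying that the specific choice of $t_{0}$ simultaneously kills the factor $m$ in the union bound, absorbs the exponential-regime $k$-dependence from part (iii), and leaves the correct combined $(Kk\log m)^{k}$ scaling; this optimization of the split point is the step I would treat most carefully.
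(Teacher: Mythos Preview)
Your proposal is correct and follows the standard tail-integration arguments; the paper itself does not give a proof but simply defers to \cite{Vershynin_2018}, where precisely these arguments appear. Your observation that the $\sqrt{n}$ in part~(iii) is redundant for a scalar variable (so that the sharp bound $(Kk)^{k}$ suffices) is apt, and your threshold-split strategy for (iv) is exactly the textbook route.
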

\begin{proof}
See \cite{Vershynin_2018}.
\end{proof}
\begin{lemma}\label{lem:z_concentrated}
If $\mathbf{z}\in\mathbb{R}^n$ is concentrated (\Cref{def:concentrated}) and $\|\mathbb{E}[\mathbf{z}]\|_2\le M$, then $\Sigma_z:=\mathbb{E}[\mathbf{z}\mathbf{z}^\top]$ satisfies
\[
\|\Sigma_z\| \le K_0 + M^2,
\]
for an absolute constant $K_0>0$ depending only on the concentration constants (hence independent of $n$).
\end{lemma}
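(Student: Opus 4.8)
The plan is to bound the operator norm of $\Sigma_z = \mathbb{E}[\mathbf{z}\mathbf{z}^\top]$ by first splitting off the mean. Write $\mathbf{z} = \mathbf{z}_0 + \mathbb{E}[\mathbf{z}]$ with $\mathbf{z}_0 := \mathbf{z} - \mathbb{E}[\mathbf{z}]$ centered. Then $\Sigma_z = \mathbb{E}[\mathbf{z}_0\mathbf{z}_0^\top] + \mathbb{E}[\mathbf{z}]\mathbb{E}[\mathbf{z}]^\top$, so by the triangle inequality for the operator norm it suffices to show $\|\mathbb{E}[\mathbf{z}_0\mathbf{z}_0^\top]\| \le K_0$ for an absolute constant, since $\|\mathbb{E}[\mathbf{z}]\mathbb{E}[\mathbf{z}]^\top\| = \|\mathbb{E}[\mathbf{z}]\|_2^2 \le M^2$. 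Note that $\mathbf{z}_0$ is itself concentrated with the same constants $C,c$, because centering is a shift and the concentration inequality in Definition~\ref{def:concentrated} is translation-invariant (apply it to $f(\cdot - \mathbb{E}[\mathbf{z}])$, which is still $1$-Lipschitz).

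Next I would use the variational characterization $\|\mathbb{E}[\mathbf{z}_0\mathbf{z}_0^\top]\| = \sup_{\|\mathbf{u}\|_2 = 1} \mathbb{E}[(\mathbf{u}^\top \mathbf{z}_0)^2]$. For a fixed unit vector $\mathbf{u}$, the map $\mathbf{x} \mapsto \mathbf{u}^\top \mathbf{x}$ is $1$-Lipschitz, so by concentration of $\mathbf{z}_0$ the scalar $\mathbf{u}^\top\mathbf{z}_0$ satisfies $\mathbb{P}(|\mathbf{u}^\top\mathbf{z}_0 - \mathbb{E}[\mathbf{u}^\top\mathbf{z}_0]| \ge t) \le Ce^{-ct^2}$; but $\mathbb{E}[\mathbf{u}^\top\mathbf{z}_0] = 0$ since $\mathbf{z}_0$ is centered, so $\mathbf{u}^\top\mathbf{z}_0$ is sub-Gaussian with parameter depending only on $C,c$. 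Integrating the tail gives $\mathbb{E}[(\mathbf{u}^\top\mathbf{z}_0)^2] \le K_0$ with $K_0 = K_0(C,c)$ an absolute constant independent of $n$ and of $\mathbf{u}$. Taking the supremum over unit $\mathbf{u}$ yields $\|\mathbb{E}[\mathbf{z}_0\mathbf{z}_0^\top]\| \le K_0$, and combining with the mean part gives $\|\Sigma_z\| \le K_0 + M^2$.

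The only mild subtlety — and the step I would be most careful about — is confirming that the concentration constants for $\mathbf{z}_0$, and hence the resulting sub-Gaussian parameter and $K_0$, genuinely do not depend on $n$: this is immediate from Definition~\ref{def:concentrated} (the constants $C,c$ are stipulated to be $n$-independent) together with the translation-invariance observation above. One could also phrase this cleanly by invoking Lemma~\ref{lem:subg-subexp}(i) to pass from concentration to sub-Gaussianity of $\mathbf{z}_0$ with an $n$-independent constant $K$, and then Lemma~\ref{lem:subg-subexp}(ii) with $k=2$ restricted to a coordinate/projection to control the second moment; the argument is the same either way. No genuine obstacle is expected here — this is a short lemma whose content is essentially that a centered concentrated vector has a bounded covariance.
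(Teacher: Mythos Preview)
Your proof is correct and essentially identical to the paper's: both use the variational characterization of the operator norm, apply the concentration inequality to the $1$-Lipschitz linear form $\mathbf{a}^\top(\cdot)$, and integrate the sub-Gaussian tail to bound the variance by $K_0=C/c$, then add the $M^2$ contribution from the mean. The only cosmetic difference is that you split $\Sigma_z=\operatorname{Cov}(\mathbf{z})+\mathbb{E}[\mathbf{z}]\mathbb{E}[\mathbf{z}]^\top$ at the matrix level before passing to unit vectors, whereas the paper performs the equivalent scalar decomposition $\mathbb{E}[(\mathbf{a}^\top\mathbf{z})^2]=\mathrm{Var}(\mathbf{a}^\top\mathbf{z})+(\mathbf{a}^\top\mu)^2$ directly.
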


\begin{proof}
Write $\mu:=\mathbb{E}[\mathbf{z}]$.
For any $\mathbf{a}\in\mathbb{R}^n$ with $\|\mathbf{a}\|_2=1$,
\[
\mathbf{a}^\top \Sigma_z \mathbf{a}
= \mathbb{E}\big[(\mathbf{a}^\top\mathbf{z})^2\big]
= \mathrm{Var}(\mathbf{a}^\top\mathbf{z}) + (\mathbf{a}^\top\mu)^2.
\]
Since $f(\mathbf{x})=\mathbf{a}^\top\mathbf{x}$ is $1$-Lipschitz, concentration (\Cref{def:concentrated}) gives
\[
\mathbb{P}\big(|\mathbf{a}^\top\mathbf{z}-\mathbf{a}^\top\mu|\ge t\big)\;\le\; C e^{-c t^2}\quad\forall t\ge0.
\]
Hence, by tail integration,
\[
\mathrm{Var}(\mathbf{a}^\top\mathbf{z})
= \mathbb{E}\big[(\mathbf{a}^\top\mathbf{z}-\mathbf{a}^\top\mu)^2\big]
= \int_0^\infty 2t\,\mathbb{P}\big(|\mathbf{a}^\top\mathbf{z}-\mathbf{a}^\top\mu|\ge t\big)\,dt
\le 2C\int_0^\infty t e^{-c t^2}\,dt
= \frac{C}{c}.
\]
Moreover $(\mathbf{a}^\top\mu)^2\le \|\mu\|_2^2\le M^2$. Therefore
\[
\mathbf{a}^\top \Sigma_z \mathbf{a}\;\le\; \frac{C}{c} + M^2
\quad\text{for all }\|\mathbf{a}\|_2=1,
\]
so $\|\Sigma_z\| \le C/c + M^2$. Setting $K_0:=C/c$ completes the proof.
\end{proof}

\begin{lemma}[Concentration of Quadratic forms]\label{lemm:concentration}
    Given a fixed matrix $A$, and $\mathbf{z}$ a concentrated random vector (definition \ref{def:concentrated}) with a bounded expectation $\mathbb{E}[\mathbf{z}]$, we have 
    \begin{equation*}
        \mathbb{E} \left[ \left( \mathbf{z}^\top A \mathbf{y} - \mathbb{E}[\mathbf{z}^\top A \mathbf{y}]\right)^k  \right] \leq C\|A\|_F^k
    \end{equation*}
    for some constant $C>0$.
\end{lemma}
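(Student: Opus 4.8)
The plan is to deduce the lemma from the Hanson--Wright inequality for concentrated random vectors and then convert the resulting tail bound into the stated moment bound. Since the lemma is applied later only in the quadratic-form instance (with $A=\tfrac1N Q_{-i}$), I would first carry out the argument for $\mathbf{y}=\mathbf{z}$; the genuinely bilinear case, with $\mathbf{y}$ a second concentrated vector independent of $\mathbf{z}$, then follows by running the same argument conditionally on $\mathbf{y}$ and controlling $\mathbb{E}\|A\mathbf{y}\|_2^2=\operatorname{Tr}(A\Sigma_y A^\top)\le\|\Sigma_y\|\,\|A\|_F^2$, where $\|\Sigma_y\|=O(1)$ by \Cref{lem:z_concentrated}. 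It also suffices to bound the $k$-th \emph{absolute} moment, which equals the displayed quantity for even $k$ and dominates it for odd $k$.

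The first step is centering. Writing $\mu:=\mathbb{E}[\mathbf{z}]$, $\mathbf{w}:=\mathbf{z}-\mu$, and replacing $A$ by its symmetric part (which alters neither $\mathbf{z}^\top A\mathbf{z}$ nor increases $\|A\|_F$ or $\|A\|$), one has
\[
\mathbf{z}^\top A\mathbf{z}-\mathbb{E}[\mathbf{z}^\top A\mathbf{z}]
=\bigl(\mathbf{w}^\top A\mathbf{w}-\mathbb{E}[\mathbf{w}^\top A\mathbf{w}]\bigr)+2\langle\mathbf{w},A\mu\rangle,
\]
so by $\mathbb{E}|X+Y|^k\le 2^{k-1}(\mathbb{E}|X|^k+\mathbb{E}|Y|^k)$ it is enough to bound the $k$-th absolute moment of each piece by $C\|A\|_F^k$. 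Here $\mathbf{w}$ is still concentrated in the sense of \Cref{def:concentrated} (pre-composing a $1$-Lipschitz function with a translation keeps it $1$-Lipschitz), hence $K$-sub-Gaussian with $K=O(1)$ by \Cref{lem:subg-subexp}(i), and $\|\mu\|_2=O(1)$ by \Cref{assum:concentration}.

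For the quadratic piece I would invoke the Hanson--Wright inequality in the concentrated / non-isotropic regime (\cite{Adamczak2014ANO}, and \cite{rudelson2013hansonwrightinequalitysubgaussianconcentration} in the sub-Gaussian formulation): $\mathbf{w}^\top A\mathbf{w}-\mathbb{E}[\mathbf{w}^\top A\mathbf{w}]$ is sub-exponential with parameters of order $(K^2\|A\|_F,\,K^2\|A\|)$. Integrating that tail (equivalently, standard sub-exponential moment estimates) bounds its $k$-th moment by $C_k(\|A\|_F^k+\|A\|^k)$, and since $\|A\|\le\|A\|_F$ and $k$ is fixed this collapses to $C\|A\|_F^k$. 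For the linear piece, $\langle\mathbf{w},A\mu\rangle$ is a $K\|A\mu\|_2$-sub-Gaussian scalar, and $\|A\mu\|_2\le\|A\|\,\|\mu\|_2$ with $\|\mu\|_2=O(1)$ and $\|A\|\le\|A\|_F$, so standard sub-Gaussian moment bounds give $\mathbb{E}|\langle\mathbf{w},A\mu\rangle|^k\le C\|A\|_F^k$. Combining the two yields the claim, with $C$ depending only on $k$, on the bound for $\|\mathbb{E}\mathbf{z}\|_2$, and on the concentration constants of \Cref{def:concentrated} --- and, crucially, not on $n$ or $A$.

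The main obstacle is simply the correct invocation of the Hanson--Wright bound for vectors that are neither isotropic nor centered; this is precisely what the concentration assumption buys us (cf.\ the discussion following \Cref{assum:concentration}), and one has to be a little careful that the non-isotropic version still has the Frobenius-norm scaling rather than, say, $\sqrt n\,\|A\|$. Everything afterward --- the centering split and the tail-to-moment passage --- is routine; its only substantive feature is that every operator norm that appears is dominated by $\|A\|_F$, so all terms reduce to the advertised $\|A\|_F^k$.
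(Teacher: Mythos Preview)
Your proof is correct and follows essentially the same route as the paper: invoke the Hanson--Wright inequality for concentrated vectors to obtain a sub-exponential tail on the centered quadratic form, then convert that tail to a moment bound via \Cref{lem:subg-subexp}. The only difference is cosmetic: the paper cites directly a Hanson--Wright version valid for (possibly non-centered) concentrated vectors (Prop.~B.41 in \cite{louart:tel-04116888}), so your explicit centering split into a quadratic and a linear piece, while correct, is unnecessary there.
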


\begin{proof}
Recall the Hanson–Wright inequality for a concentrated random vector $z$ (see  Prop B.41 \cite{louart:tel-04116888}): there exist constants $C',c' > 0$ such that for all $t>0$,
\begin{equation*}
\mathbb{P}\left( |\mathbf{z}^\top Q \mathbf{z} - \mathbb{E}[\mathbf{z}^\top Q \mathbf{z}]| \geq t \right) \leq C'\left( \exp\left( -\frac{c't^2}{\|Q\|_F^2} \right) + \exp\left( -\frac{c't}{\|Q\|_2} \right) \right).
\end{equation*}
Therefore, the random variable $z^\top Q z - \mathbb{E}[z^\top Q z]$ is sub-exponential and from \Cref{lem:subg-subexp} we get the result.
\end{proof}

\begin{lemma}[Operator–norm moment bound]\label{lem:operatornorm}
Let \(A\) be a \(p\times q\) random matrix whose columns \(\mathbf{A_i}\) are
independent, sub‑Gaussian vectors in \(\mathbb{R}^q\) with common
mean \(\boldsymbol{\mu}\) and covariance matrix \(\Sigma\).
If $\|\boldsymbol{\mu}\|_2 = O(\sqrt p)$ and $\| \Sigma\| =O(1)$ then for every integer \(k\ge1\) we have
\[
  \mathbb{E}\bigl[\|A\|^k\bigr] = O(
  \bigl(\sqrt{p}+\sqrt{q}\bigr)^{k}).
\]
\end{lemma}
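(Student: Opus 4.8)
The plan is to split $A$ into its mean and its fluctuation, bound the operator norm of each piece by a standard non-asymptotic estimate, and then convert a sub-Gaussian tail bound into the claimed moment bound.

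First I would center the columns: write $A = \bar A + \boldsymbol\mu \mathbf 1^\top$, where the columns $\bar{\mathbf A}_i := \mathbf A_i - \boldsymbol\mu$ of $\bar A$ are independent, mean zero, sub-Gaussian with sub-Gaussian parameter $O(1)$ (automatic for concentrated vectors by \Cref{lem:subg-subexp}(i), which is the setting in which the lemma is used), and have covariance $\Sigma$ with $\|\Sigma\| = O(1)$. By the triangle inequality $\|A\| \le \|\bar A\| + \|\boldsymbol\mu \mathbf 1^\top\|$, and the second term is rank one, so its operator norm equals $\|\boldsymbol\mu\|_2$ times the square root of the number of columns of $A$, which under the stated hypothesis on $\boldsymbol\mu$ contributes at most $O(\sqrt p + \sqrt q)$. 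Since $(a+b)^k \le 2^{k-1}(a^k + b^k)$ for $a,b \ge 0$, it then suffices to prove $\mathbb E\|\bar A\|^k = O((\sqrt p + \sqrt q)^k)$.

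For the centered matrix I would run the standard $\varepsilon$-net argument for the operator norm. Choose $1/4$-nets $\mathcal N$ and $\mathcal M$ of the two relevant unit spheres, with $|\mathcal N| \le 9^{p}$ and $|\mathcal M| \le 9^{q}$, so that $\|\bar A\| \le 2 \max_{\mathbf x \in \mathcal N,\, \mathbf y \in \mathcal M} \mathbf x^\top \bar A \mathbf y$. For fixed unit vectors $\mathbf x,\mathbf y$, the scalar $\mathbf x^\top \bar A \mathbf y = \sum_i y_i\,(\mathbf x^\top \bar{\mathbf A}_i)$ is a sum of independent, mean-zero, $O(1)$-sub-Gaussian variables (a unit-vector projection of a sub-Gaussian vector is sub-Gaussian, \Cref{def:tail-subg-subexp}), hence itself $O(1)$-sub-Gaussian because $\sum_i y_i^2 = 1$; thus $\mathbb P(|\mathbf x^\top \bar A \mathbf y| \ge t) \le 2 e^{-c t^2}$. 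A union bound over $\mathcal N \times \mathcal M$ gives $\mathbb P(\|\bar A\| \ge t) \le 2 \cdot 9^{p+q} e^{-c t^2/4}$, and taking $t$ of order $\sqrt{p+q}$ absorbs the $9^{p+q}$ factor, leaving $\mathbb P\big(\|\bar A\| \ge C(\sqrt p + \sqrt q) + s\big) \le 2 e^{-c' s^2}$ for suitable absolute constants $C, c'$.

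Finally I would integrate this tail: $\mathbb E\|\bar A\|^k = \int_0^\infty k t^{k-1}\, \mathbb P(\|\bar A\| \ge t)\, dt$, which, splitting the integral at $t = C(\sqrt p + \sqrt q)$ and using the sub-Gaussian tail beyond it, is $O((\sqrt p + \sqrt q)^k)$; combined with the mean term this yields the claim. I do not expect a genuine obstacle here — this is essentially the classical operator-norm bound for matrices with independent sub-Gaussian columns (e.g.\ \cite{Vershynin_2018}) plus a rank-one correction — so the only points requiring care are the bookkeeping in the union bound (matching the net cardinality $9^{p+q}$ against the Gaussian tail at scale $\sqrt{p+q}$ so that the absolute constants close) and checking that the rank-one mean contribution genuinely fits within $O(\sqrt p + \sqrt q)$ under the hypothesis on $\|\boldsymbol\mu\|_2$.
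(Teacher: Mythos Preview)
Your argument is correct and follows the same overall strategy as the paper: split off the rank-one mean, establish a sub-Gaussian tail for the centered matrix, and integrate the tail to get moments. The only substantive difference is packaging: the paper first whitens, setting $A' := (A - \mathbf 1\boldsymbol\mu^\top)\Sigma^{-1/2}$, and then invokes \cite[Thm~4.6.1]{Vershynin_2018} for matrices with isotropic sub-Gaussian rows as a black box, whereas you run the $\varepsilon$-net argument directly on the centered (but non-whitened) matrix. Your route is slightly more self-contained and avoids the implicit invertibility assumption on $\Sigma$ that the whitening step carries; the paper's route is shorter on the page by outsourcing the net argument. Both arrive at the same sub-Gaussian tail $\mathbb P(\|\bar A\|\ge C(\sqrt p+\sqrt q)+s)\le 2e^{-c's^2}$, after which the tail-to-moment conversion is identical in spirit (the paper phrases it via \Cref{lem:subg-subexp}, you via direct integration). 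Your caveat about the rank-one mean term is well placed: as literally stated, $\|\boldsymbol\mu\|_2=O(\sqrt p)$ combined with the $\sqrt{q}$ (or $\sqrt{p}$) factor from $\mathbf 1$ does not obviously land inside $O(\sqrt p+\sqrt q)$; the paper's own proof glosses over the same point, and in the applications (to $U$ and $Z$) one actually has $\|\boldsymbol\mu\|_2=O(1)$ from Assumption~\ref{assum:concentration}, which is what makes the bound close.
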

\begin{proof}
Let \(A':=\bigl(A-\mathbf 1_p\boldsymbol{\mu}^{\!\top}\bigr)\,\Sigma^{-1/2}.\) Then the rows of \(A'\) are i.i.d.\ mean–zero, isotropic sub‑Gaussian vectors. By \cite[Theorem.\,4.6.1]{Vershynin_2018}, for absolute \(c_0,C_0>0\) and all \(t\ge0\),
\begin{equation*}
  \mathbb{P}(\|A'\|> C_0(\sqrt{p}+\sqrt{q})+t\bigr)
  \le 2e^{-c_0t^{2}}.    
\end{equation*}
so \(\|A'\|\) is sub‑exponential with parameters
\(
(\nu,b)=\bigl(C_1(\sqrt{p}+\sqrt{q}),\,C_2\bigr).
\)
The triangle inequality gives
\begin{equation*}
  \|A\|
  \le
  \sqrt{p}\,\|\boldsymbol{\mu}\|_2
  +\;
  \|\Sigma\|^{1/2}\,\|A'\|.  
\end{equation*}

Hence \(\|A\|\) is sub‑exponential with parameters
\begin{equation*}
  \bigl(\widetilde\nu,\widetilde b\bigr)
  \;=\;
  \Bigl(
     \sqrt{p}\,\|\boldsymbol{\mu}\|_2
     +\|\Sigma\|^{1/2}C_1(\sqrt{p}+\sqrt{q}),
     \;
     \|\Sigma\|^{1/2}C_2
  \Bigr)
  \;=\;O\!\bigl(\sqrt{p}+\sqrt{q}\bigr).    
\end{equation*}

Applying \Cref{lem:subg-subexp} we get the desired result.
\end{proof}

Here we proof that we can replace the  quadratic form by it's expectation in our expression.
Let \( \tilde Q_{i} := \mathbb{E}(Q_{-i})  \), $\delta_i := \frac{1}{N} \mathbf{z_i}^\top Q_{-i} \mathbf{z_i}$ and \( \tilde \delta_i := \mathbb{E}\left( \frac{1}{N} \mathbf{z_i}^\top Q_{-i} \mathbf{z_i} \right) = \frac{1}{N} \operatorname{Tr}(\tilde Q_i \Sigma_z) \).
\begin{equation*}\label{eq:difference1}
\begin{aligned}
& \left| \frac{1}{N}\sum_{i=1}^N
\mathbb{E}\left[ \operatorname{Tr} \left( \Theta_\ast^{\top} \frac{\mathbf{\mathbf{u_i z_i}}^\top Q_{-i}}{1+\delta_i}\Sigma_{uz}^\top \Theta^{\ast}   \right) \right]  - \frac{1}{N}\sum_{i=1}^N
\mathbb{E}\left[ \operatorname{Tr} \left( \Theta_\ast^{\top} \frac{ \mathbf{u_i z_i}^\top Q_{-i}}{1+\tilde \delta_i} \Sigma_{uz}^\top \Theta^{\ast}\right) \right] \right| \\ 
&= \left| \frac{1}{N}\sum_{i=1}^N
\mathbb{E}\left[ \operatorname{Tr} \left(   \Theta_\ast^{\top} \mathbf{u_i z_i}^\top Q_{-i}\frac{\delta_i - \tilde \delta_i}{(1+\delta_i)(1+\tilde \delta_i)} \Sigma_{uz}^\top \Theta^{\ast}  \right) \right]   \right| \\
&= \left| \frac{1}{N}\sum_{i=1}^N
\mathbb{E}\left[ \operatorname{Tr} \left(  \Theta_\ast^{\top} \mathbf{u_i z_i}^\top Q\frac{ \delta_i - \tilde \delta_i}{1+\tilde \delta_i} \Sigma_{uz}^\top \Theta^{\ast} \right) \right]   \right| \\
\end{aligned}
\end{equation*}
Let $D := \operatorname{diag}(\frac{\delta_i - \tilde \delta_i}{1+\tilde \delta_i}, \; i \in \{1 \ldots, N \})$, we can write back the sum into a matrix form:
\begin{equation*}
\begin{aligned}
\left| \frac{1}{N}\sum_{i=1}^N
\mathbb{E}\left[ \operatorname{Tr} \left(  \Theta_\ast^{\top} \mathbf{u_i z_i}^\top Q\frac{\delta_i - \tilde \delta_i}{\tilde \delta_i} \Sigma_{uz}^\top \Theta^{\ast} \right) \right]   \right|  & =  \left| \frac{1}{N}
\mathbb{E}\left[ \operatorname{Tr} \left( \Theta_\ast^{\top} U D Z^\top Q \Sigma_{uz}^\top \Theta^{\ast} \right) \right]   \right| \\
& \leq  \frac{1}{N}
\mathbb{E}\left[  \|\Sigma_{uz}\|  \| U\| \|Z\|  \|Q\| \|D\|  \|\Theta^{\ast}  \Theta_\ast^{\top}\|_{F}  \right]  \\
&= \frac{1}{N}  \|\Sigma_{uz}\|  \|\Theta^{\ast}  \Theta_\ast^{\top}\|_{F}
\mathbb{E}\left[  \| U\| \|Z\|  \|Q\| \|D\|   \right]
\end{aligned}
\end{equation*}
We begin by observing that $\|Q\| \leq \frac{1}{\lambda}$, which follows from the fact that $(Z Z^\top / N + \lambda I_n) \succeq \lambda I_n$. Next, we note that $\|\Sigma_{uz}\| = O(1)$. To see this, consider the definition:
\begin{equation*}
\begin{aligned}
    \|\Sigma_{uz}\| &= \sup_{\|\mathbf{a}\|_2 = 1, \|\mathbf{b}\|_2 = 1} \mathbf{a}^\top \Sigma_{uz} \mathbf{b} \\
    &= \sup_{\|\mathbf{a}\|_2 = 1, \|\mathbf{b}\|_2 = 1} \mathbf{a}^\top \mathbb{E}[\mathbf{u z}^\top] \mathbf{b} \\
    &\leq \sup_{\|\mathbf{a}\|_2 = 1, \|\mathbf{b}\|_2 = 1} \sqrt{\mathbb{E}[(\mathbf{a}^\top \mathbf{u})^2]} \sqrt{\mathbb{E}[(\mathbf{z}^\top \mathbf{b})^2]}.
\end{aligned}
\end{equation*}
The first term, $\sqrt{\mathbb{E}[(\mathbf{a}^\top \mathbf{u})^2]}$, does not depend on $n$. For the second term, we have $\mathbb{E}[(\mathbf{z}^\top \mathbf{b})^2] = \mathbf{b}^\top \Sigma_z \mathbf{b} \leq \|\Sigma_z\|$, which is bounded by lemma \ref{lem:z_concentrated}. Hence, $\|\Sigma_{uz}\| = O(1)$ as claimed.  Finally $\|\Theta_\ast \Theta^{\ast^\top}\|_F$ is fixed and deterministic.

That means, we have:
\begin{equation*}
\begin{aligned}
\frac{1}{N}  \|\Sigma_{uz}\|  \|\Theta^{\ast}  \Theta_\ast^{\top}\|_{F}
\mathbb{E}\left[  \| U\| \|Z\|  \|Q\| \|D\|   \right]
=O(   \frac{1}{N} \mathbb{E}\left[  \| U\| \|Z\|  \|D\|   \right])
\end{aligned}
\end{equation*}

Using twice Cauchy-Schwarz, we have
\begin{equation*}
\begin{aligned}
    \frac{1}{N} \mathbb{E}\left[  \| U\| \|Z\|  \|D\|   \right] &\leq \frac{1}{N} \mathbb{E}[  \| U\|^2 \|Z\|^2]^{\frac{1}{2}}  \mathbb{E}[\|D\|^2]^{\frac{1}{2}} \\
    &\leq \frac{1}{N} \mathbb{E}[  \| U\|^4]^{\frac{1}{4}} \mathbb{E}[\|Z\|^4]^{\frac{1}{4}}  \mathbb{E}[\|D\|^2]^{\frac{1}{2}} 
\end{aligned}
\end{equation*}
Moving forward, we apply Lemma~\ref{lem:operatornorm} to obtain bounds on the operator norms of $U$ and $Z$: specifically,
\[
\mathbb{E}[\|U\|^k] = O(\sqrt{N}^k), \quad \text{and} \quad \mathbb{E}[\|Z\|^k] = O((\sqrt{N} + \sqrt{n})^k).
\]
In addition, using Lemma~\ref{lemm:concentration} and using the fact that $\frac{1}{N}\mathbf{z_i}^\top Q_{-i}\mathbf{z_i} \geq 0$ so that $1+\tilde \delta_i \geq 1$, we get
\[
\mathbb{E}[\|D\|^k] = O\left(\frac{\|Q\|_F^k}{N^k}\right) = O\left(\frac{(\sqrt{N} \|Q\|)^k}{N^k}\right) = O\left(\frac{1}{\sqrt{N}^k}\right),
\]
Putting things together, we get that $\frac{1}{N} \mathbb{E}\left[  \| U\| \|Z\|  \|D\|   \right] = O(\frac{1}{\sqrt{N}})$, thus finally we have
\begin{equation*}
 \frac{1}{N}\sum_{i=1}^N
\mathbb{E}\left[ \operatorname{Tr} \left( \Theta_\ast^{\top} \frac{\mathbf{u_i z_i}^\top Q_{-i}}{1+\delta_i} \Sigma_{uz}^\top \Theta^{\ast}   \right) \right]  \to \frac{1}{N}\sum_{i=1}^N
\mathbb{E}\left[ \operatorname{Tr} \left(   \Theta_\ast^{\top} \frac{\mathbf{u_i z_i}^\top Q_{-i}}{1+\tilde \delta_i} \Sigma_{uz}^\top \Theta^{\ast}\right) \right]
\end{equation*}

\paragraph{Step 3: Limit}\label{para:limit}
As a result of step 1 and 2, we can take the expectation of independent random variables:
\begin{equation*}
    \mathbb{E}[ \operatorname{Tr}(\Theta_\ast^{\top} \frac{\mathbf{u_i z_i}^\top Q_{-i}}{1+\tilde \delta_i}\Sigma_{uz}^\top \Theta^{\ast}]) =  \operatorname{Tr}( \Theta_\ast^{\top}  \frac{\Sigma_{uz} \mathbb{E}[Q_{-i}]}{1+\tilde \delta_i} \Sigma_{uz}^\top \Theta^{\ast}),
\end{equation*}
Using Sherman Morrison (lemma \ref{lemm:Schur}) we have $\|Q-Q_{-i}\|_{F} \to 0$. And since $\|\Sigma_{uz}^{\top} \Theta^{\ast}  \Theta_\ast^{\top} \Sigma_{uz} \|  $ is bounded, we have  
\begin{equation*}
    \operatorname{Tr}( \Theta_\ast^{\top}  \frac{\Sigma_{uz} \mathbb{E}[Q_{-i}]}{1+\tilde \delta_i} \Sigma_{uz}^\top \Theta^{\ast})\to  \operatorname{Tr}( \Theta_\ast^{\top}  \frac{\Sigma_{uz} \mathbb{E}[Q]}{1+\tilde \delta} \Sigma_{uz}^\top \Theta^{\ast})  ,
\end{equation*}
The limit of $ \mathbb{E}[Q]$ is a classical result in random matrix theory.
\begin{lemma}[\cite{louart2022concentration}]
Let $\bar Q := (\frac{\Sigma_z}{1+\delta}+\lambda I_n)^{-1}$, we have
\begin{equation*}
    \mathbb{E}[Q - \bar Q]  \to 0
\end{equation*}
with  $\delta$ such as $\tilde \delta  -\delta \to 0 $ and verifying the fixed-point equation:
\begin{equation*}
    \delta - \frac{1}{N} \operatorname{Tr} \left( (\frac{\Sigma_z}{1+\delta}+\lambda I_n)^{-1} \Sigma_z \right) \to 0
\end{equation*}    
\end{lemma}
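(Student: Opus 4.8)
The plan is to prove this classical deterministic equivalent in operator norm by coupling the resolvent identity with the Sherman--Morrison decoupling of Lemma~\ref{lemm:Schur} (already exploited above) and then bounding the residual with the moment estimates of Lemmas~\ref{lemm:concentration} and~\ref{lem:operatornorm}. Throughout I will use the a~priori bounds $\|Q\|\le 1/\lambda$ and $\|\bar Q\|\le 1/\lambda$ (both $Q^{-1}$ and $\bar Q^{-1}$ dominate $\lambda I_n$ since $\Sigma_z\succeq0$ and $\delta\ge0$), together with $\|\Sigma_z\|=O(1)$ from Lemma~\ref{lem:z_concentrated}. First I would write $Q^{-1}=\frac1N\sum_{i=1}^N\mathbf{z}_i\mathbf{z}_i^\top+\lambda I_n$ and $\bar Q^{-1}=\frac{\Sigma_z}{1+\delta}+\lambda I_n$, and apply $A^{-1}-B^{-1}=A^{-1}(B-A)B^{-1}$ to obtain, after taking expectations and pulling out the deterministic factor $\bar Q$,
\[
\bar Q-\mathbb{E}[Q]
=\mathbb{E}\!\left[Q\Big(\tfrac1N\textstyle\sum_{i=1}^N\mathbf{z}_i\mathbf{z}_i^\top-\tfrac{\Sigma_z}{1+\delta}\Big)\bar Q\right]
=\frac1N\sum_{i=1}^N\mathbb{E}\!\left[Q\mathbf{z}_i\mathbf{z}_i^\top\right]\bar Q-\mathbb{E}[Q]\tfrac{\Sigma_z}{1+\delta}\bar Q .
\]

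Next I would decouple $Q$ from $\mathbf{z}_i$ via the second identity of Lemma~\ref{lemm:Schur}, $Q\mathbf{z}_i=Q_{-i}\mathbf{z}_i/(1+\delta_i)$ with $\delta_i=\frac1N\mathbf{z}_i^\top Q_{-i}\mathbf{z}_i$, so that
\[
\mathbb{E}\!\left[Q\mathbf{z}_i\mathbf{z}_i^\top\right]=\mathbb{E}\!\left[\frac{Q_{-i}\mathbf{z}_i\mathbf{z}_i^\top}{1+\delta_i}\right].
\]
Since $Q_{-i}\perp\mathbf{z}_i$, replacing the random scalar $\delta_i$ by its surrogate $\tilde\delta_i=\frac1N\operatorname{Tr}(\mathbb{E}[Q_{-i}]\Sigma_z)$ (justified by Lemma~\ref{lemm:concentration} exactly as in the bias computation) and then taking the inner expectation over $\mathbf{z}_i$ turns each summand into $\mathbb{E}[Q_{-i}]\Sigma_z/(1+\tilde\delta_i)$. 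Using $\|\mathbb{E}[Q_{-i}]-\mathbb{E}[Q]\|_F\to0$ (again Lemma~\ref{lemm:Schur}) and $\tilde\delta_i\to\delta$, each summand is approximated by $\mathbb{E}[Q]\Sigma_z/(1+\delta)$; summing over $i$ with the $1/N$ prefactor and substituting back, the two leading terms cancel precisely, leaving only residual terms. This cancellation is the structural heart of the argument, and it is what forces the particular form of $\bar Q$ and of the fixed-point equation.

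The main obstacle is the uniform control of those residuals. They are of two kinds: (i) the fluctuation error from replacing $\delta_i$ by $\tilde\delta_i$, which I would cast as a diagonal matrix $D=\operatorname{diag}((\delta_i-\tilde\delta_i)/(1+\tilde\delta_i))$ sandwiched between $U,Z$-type factors and bounded by $\|D\|=O(\|Q\|_F/N)=O(1/\sqrt N)$ using Lemma~\ref{lemm:concentration}, combined with $\mathbb{E}[\|Z\|^k]=O((\sqrt N+\sqrt n)^k)$ from Lemma~\ref{lem:operatornorm}; and (ii) the replacement error $\mathbb{E}[Q_{-i}]-\mathbb{E}[Q]$. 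Bounding each residual by inserting operator norms $\le 1/\lambda$ and pulling out $\|\Sigma_z\|=O(1)$ should yield $\|\bar Q-\mathbb{E}[Q]\|=O(1/\sqrt N)\to0$. The delicate point is that $\bar Q$ itself depends on $\delta$, so the estimates are only self-consistent once $\delta$ is fixed as a solution of the fixed-point equation; establishing that the residual bound holds uniformly over the admissible range of $\delta$ is the crux.

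Finally I would pin down $\delta$. Setting $g(x)=\frac1N\operatorname{Tr}\big(\Sigma_z(\tfrac{\Sigma_z}{1+x}+\lambda I_n)^{-1}\big)=\frac1N\sum_j\frac{\sigma_j(1+x)}{\sigma_j+\lambda(1+x)}$ for $x\ge0$ (with $\sigma_j$ the eigenvalues of $\Sigma_z$), one checks that $g$ is continuous, nonnegative, strictly increasing, concave, and bounded by $\operatorname{Tr}(\Sigma_z)/(N\lambda)=O(1)$. Hence $x\mapsto g(x)-x$ is concave, positive at $x=0$, and tends to $-\infty$, so it has a unique positive zero; this defines $\delta$ and gives uniqueness. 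Consistency $\tilde\delta-\delta\to0$ then follows from the concentration step, which completes the identification of $\bar Q$ as the deterministic equivalent of $\mathbb{E}[Q]$.
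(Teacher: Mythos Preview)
The paper does not supply its own proof of this lemma: it is quoted as a classical result with a citation to \cite{louart2022concentration} and then used as a black box. So there is no in-paper argument to compare against. Your outline is precisely the standard route taken in that literature (resolvent identity, Sherman--Morrison decoupling, Hanson--Wright concentration of the quadratic forms, then identification of the fixed point), and the structural cancellation you highlight is indeed the mechanism that singles out $\bar Q$ and the self-consistent equation for $\delta$.

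Two places deserve tightening. First, in the residual analysis there is no $U$ here; the fluctuation term should be rewritten as $\tfrac1N\,\mathbb{E}[QZDZ^\top]\bar Q$ with $D=\operatorname{diag}((\tilde\delta_i-\delta_i)/(1+\tilde\delta_i))$ and bounded via $\|Q\|\le\lambda^{-1}$, $\mathbb{E}\|Z\|^4=O((N+n)^2)$, and $\mathbb{E}\|D\|^2=O((\log N)^2/N)$, giving $o(1)$ in operator norm. Second, and more importantly, the justification of $\|\mathbb{E}[Q_{-i}]-\mathbb{E}[Q]\|\to0$ is not ``Sherman--Morrison'' per se: the rank-one correction $\tfrac1N\,Q_{-i}\mathbf z_i\mathbf z_i^\top Q_{-i}/(1+\delta_i)$ has operator norm of order $O(1)$ pathwise, and what saves you is that after averaging over $\mathbf z_i$ (independent of $Q_{-i}$) it becomes $\tfrac1{N(1+\tilde\delta_i)}\,\mathbb{E}[Q_{-i}\Sigma_z Q_{-i}]$, whose operator norm is $O(1/N)$. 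This distinction matters because it also resolves the apparent circularity in your last paragraph: one first shows $\|\mathbb{E}[Q]-\bar Q_{\tilde\delta}\|\to0$ with $\bar Q_x:=(\Sigma_z/(1+x)+\lambda I_n)^{-1}$ and $\tilde\delta:=\tfrac1N\operatorname{Tr}(\mathbb{E}[Q_{-1}]\Sigma_z)$ (no reference to $\delta$ needed for this step), deduces that $\tilde\delta$ is an approximate solution of the fixed-point equation, and only then invokes uniqueness of the zero of $x\mapsto g(x)-x$ to conclude $\tilde\delta-\delta\to0$ and hence $\|\bar Q_{\tilde\delta}-\bar Q_\delta\|\to0$. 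With these two clarifications your sketch goes through.
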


Given that $\tilde\Sigma \Theta^{\ast}  \Theta_\ast^{\top} \tilde \Sigma^\top$ is bounded in Frobenius norm and using the above limit, we have finally 
\begin{equation*}
    \mathbb{E}\left[ \operatorname{Tr} \left( \left( \frac{1}{N} Z^\top U \Theta_* \right)^\top Q \tilde\Sigma \Theta^{*}\right) \right] -\operatorname{Tr} \left(  \tilde\Sigma \Theta_\ast \Theta_\ast^{\top}\frac{\tilde\Sigma^\top \bar Q}{1+\delta}\right) \to 0
\end{equation*}
\paragraph{Third Term}
Let's focus on determining the limit of 
\begin{equation*}
\begin{aligned}
    \mathbb{E} \left[
\left\| \left( \frac{1}{N} Z^\top U \Theta_\ast \right)^\top
\left( \frac{1}{N} Z^\top Z + \lambda I_n \right)^{-1} 
 \mathbf{z_0} 
\right\|_2^2 \right] 
&=
\frac{1}{N^2}\mathbb{E} \left[ \Theta_\ast^{\top} U^\top Z 
Q
\mathbf{z_0 z_0}^\top
Q
 Z^\top U \Theta_\ast\right] \\
&= \frac{1}{N^2}\mathbb{E} \left[ \Theta_\ast^{\top} U^\top Z 
Q
\Sigma_z
Q
 Z^\top U \Theta_\ast\right]
\end{aligned}
\end{equation*}
Let's decouple $Q$ from $U$ and $Z$ as we did above. We begin by writing:
\begin{equation*}
\begin{aligned}
\frac{1}{N^2}\mathbb{E} \left[ \Theta_\ast^{\top} U^\top Z 
Q
\Sigma_z
Q
 Z^\top U \Theta_\ast\right]=\frac{1}{N^2} \sum_{i=1}^N\mathbb{E} \left[ \Theta_\ast^{\top} \mathbf{u_i z_i}^\top 
Q
\Sigma_z
Q
 Z^\top U \Theta_\ast\right]
\end{aligned}
\end{equation*}

Using Sherman-Morrison identity (equation \ref{eq:schur2}), we have
\begin{equation*}
\begin{aligned}
\frac{1}{N^2} \sum_{i=1}^N\mathbb{E} \left[ \Theta_\ast^{\top} \mathbf{u_i z_i}^\top 
Q
\Sigma_z
Q
 Z^\top U \Theta_\ast\right] = \frac{1}{N^2}\sum_{i=1}^N\mathbb{E} \left[ \Theta_\ast^{\top} \mathbf{u_i z_i}^\top 
\frac{Q_{-i}}{1+ \delta_i}
\Sigma_z
Q
 Z^\top U \Theta_\ast\right]
\end{aligned}
\end{equation*}
We use now the fact that $\delta_i$ is concentrated around its expectation $\tilde \delta_i$
\begin{equation*}
\begin{aligned}
\frac{1}{N^2}\sum_{i=1}^N\mathbb{E} \left[ \Theta_\ast^{\top} \mathbf{u_i z_i}^\top 
\frac{Q_{-i}}{1+ \delta_i}
\Sigma_z
Q
 Z^\top U \Theta_\ast\right] - \frac{1}{N^2}\sum_{i=1}^N\mathbb{E} \left[ \Theta_\ast^{\top} \mathbf{u_i z_i}^\top 
\frac{Q_{-i}}{1+\tilde \delta_i}
\Sigma_z
Q
 Z^\top U \Theta_\ast\right] \to 0
\end{aligned}
\end{equation*}
To justify that, we rewrite the difference between the two above sums as $\frac{1}{N^2} \mathbb{E}[\Theta_\ast^{\top} U^\top \Delta_1 Z 
Q
\Sigma_z
Q
 Z^\top U \Theta_\ast]$ where $\Delta_1 = \operatorname{diag}(\frac{\delta_i-\tilde\delta_i}{1+\tilde \delta_i}, i \in \{ 1,\ldots,N \})$ and  $\mathbb{E}[\| \Delta_1\|^k] = O((\frac{logN}{\sqrt{N}})^k), k \in \mathbb{N}$. [Justification max of subgaussians is log N (\Cref{lem:subg-subexp})
We now do the same thing on the other side
\begin{equation*}
\begin{aligned}
\frac{1}{N^2}\sum_{i=1}^N\mathbb{E} \left[ \Theta_\ast^{\top} \mathbf{u_i z_i}^\top 
\frac{Q_{-i}}{1+\tilde \delta_i}
\Sigma_z
Q
 Z^\top U \Theta_\ast\right] 
 &=\frac{1}{N^2}\sum_{i,j=1}^N\mathbb{E} \left[ \Theta_\ast^{\top} u_i \mathbf{z_i}^\top 
\frac{Q_{-i}}{1+\tilde \delta_i}
\Sigma_z
Q
 \mathbf{z_j u_j}^\top \Theta_\ast\right] \\
 &=\frac{1}{N^2}\sum_{i,j=1}^N\mathbb{E} \left[ \Theta_\ast^{\top} \mathbf{u_i z_i}^\top 
\frac{Q_{-i}}{1+\tilde \delta_i}
\Sigma_z
\frac{Q_{-j}}{1+\delta_j}
 \mathbf{z_j u_j}^\top \Theta_\ast\right] \\
 & \to \frac{1}{N^2}\sum_{i,j=1}^N\mathbb{E} \left[ \Theta_\ast^{\top} u_i \mathbf{z_i}^\top 
\frac{Q_{-i}}{1+\tilde \delta_i}
\Sigma_z
\frac{Q_{-j}}{1+\tilde \delta_j}
 \mathbf{z_j u_j}^\top \Theta_\ast\right]
\end{aligned}
\end{equation*}
We justify the last step by writing the difference between the last two terms as $\frac{1}{N^2}\mathbb{E} \left[ \Theta_\ast^{\top} U^\top \Delta_2 Z 
Q
\Sigma_z
Q Z^T \Delta_1 U \Theta_\ast\right]$, where $\Delta_2 = \operatorname{diag}(\frac{1+\delta_i}{1+\tilde  \delta_i}, i \in \{ 1,\ldots,N \})$ and  $\mathbb{E}[\| \Delta_2\|^k] = O(1), k \in \mathbb{N}$. 

We split now the sum into two terms $i=j$ and $i \neq j$,
\begin{equation*}
\begin{aligned}
 \frac{1}{N^2}\sum_{i,j=1}^N\mathbb{E} \left[ \Theta_\ast^{\top} \mathbf{u_i z_i}^\top 
\frac{Q_{-i}}{1+\tilde \delta_i}
\Sigma_z
\frac{Q_{-j}}{1+\tilde \delta_j}
 \mathbf{z_j u_j}^\top \Theta_\ast\right] &=  \frac{1}{N^2}\sum_{i=1}^N\mathbb{E} \left[ \Theta_\ast^{\top} \mathbf{u_i z_i}^\top 
\frac{Q_{-i}}{1+\tilde \delta_i}
\Sigma_z
\frac{Q_{-i}}{1+\tilde \delta_i}
 \mathbf{z_i u_i}^\top \Theta_\ast\right] \\
 &+ \frac{1}{N^2}\sum_{\substack{i,j=1 \\ i \ne j}}^N\mathbb{E} \left[ \Theta_\ast^{\top} \mathbf{u_i z_i}^\top 
\frac{Q_{-i}}{1+\tilde \delta_i}
\Sigma_z
\frac{Q_{-j}}{1+\tilde \delta_j}
 \mathbf{z_j u_j}^\top \Theta_\ast\right]
\end{aligned}
\end{equation*}

For the term $i=j$, we use the concentration of the quadratic form random variable $\xi_i :=\frac{1}{N}\mathbf{z_i}^\top Q_{-i}\Sigma_z Q_{-i}\mathbf{z_i}$ around its expectation $\tilde \xi_i$,
\begin{equation*}
\begin{aligned}
  \frac{1}{N^2}\sum_{i=1}^N\mathbb{E} \left[ \Theta_\ast^{\top} \mathbf{u_i z_i}^\top 
\frac{Q_{-i}}{1+\tilde \delta_i}
\Sigma_z
\frac{Q_{-i}}{1+\tilde \delta_i}
 \mathbf{z_i u_i}^\top \Theta_\ast\right] 
 -  \frac{1}{N}\sum_{i=1}^N \frac{\tilde \xi_i}{(1+\tilde \delta_i)^2} \Theta_\ast^{\top}\Sigma_u\Theta_\ast \to 0
\end{aligned}
\end{equation*}
The justification is the same as above, we write the difference as $\frac{1}{N}\sum_{i=1}^N\mathbb{E} \left[ \Theta_\ast^{\top} U^\top \Delta_3 U \Theta_\ast\right] $, where $\Delta_3 = \operatorname{diag}(\frac{\xi_i - \tilde \xi_i }{(1+\tilde \delta_i)^2}, i \in \{ 1,\ldots,N \})$ and  $\mathbb{E}[\| \Delta_3\|^k] = O((\frac{logN}{\sqrt{N}})^k), k \in \mathbb{N}$.

For the term $i \neq j $, we still have dependency between $Q_{-i}$ and $Q_{-j}$, so we reapply Sherman-Morrison (equation \ref{eq:schur1} this time):
\begin{equation*}
\begin{aligned}
&\frac{1}{N^2}\sum_{\substack{i,j=1 \\ i \ne j}}^N\mathbb{E} \left[ \Theta_\ast^{\top} \mathbf{u_i z_i}^\top 
\frac{Q_{-i}}{1+\tilde \delta_i}
\Sigma_z
\frac{Q_{-j}}{1+\tilde \delta_j}
 \mathbf{z_j u_j}^\top \Theta_\ast\right] = \kappa_1 + \kappa_2 + \kappa_3 + \kappa_4
\end{aligned}
\end{equation*}
with
\begin{equation*}
\begin{aligned}
&\kappa_1= \frac{1}{N^2}\sum_{\substack{i,j=1 \\ i \ne j}}^N\mathbb{E} \left[ \Theta_\ast^{\top} \mathbf{u_i z_i}^\top 
\frac{Q_{-i,j}}{1+\tilde \delta_i}
\Sigma_z
\frac{Q_{-i,j}}{1+\tilde \delta_j}
 \mathbf{z_j u_j}^\top \Theta_\ast\right]\\
&\kappa_2 = - \frac{1}{N^2}\sum_{\substack{i,j=1 \\ i \ne j}}^N \frac{1}{N}\mathbb{E} \left[ \Theta_\ast^{\top} \mathbf{u_i z_i}^\top 
\frac{Q_{-i} \mathbf{z_j}\mathbf{z_j^\top} Q_{-i,j}}{1+\tilde \delta_i}
\Sigma_z
\frac{Q_{-j}}{1+\tilde \delta_j}
 \mathbf{z_j}\mathbf{u_j}^\top \Theta_\ast\right] \\
&\kappa_3 = - \frac{1}{N^2}\sum_{\substack{i,j=1 \\ i \ne j}}^N \frac{1}{N} \mathbb{E} \left[ \Theta_\ast^{\top} \mathbf{u_i}\mathbf{z_i}^\top 
\frac{Q_{-i}}{1+\tilde \delta_i}
\Sigma_z
\frac{Q_{-j}\mathbf{z_i}\mathbf{z_i}^\top Q_{-i,j}}{1+\tilde \delta_j}
 \mathbf{z_j} \mathbf{u_j}^\top \Theta_\ast\right]  \\
&\kappa_4 = \frac{1}{N^2}\sum_{\substack{i,j=1 \\ i \ne j}}^N \frac{1}{N^2} 
\mathbb{E} \left[ \Theta_\ast^{\top} \mathbf{u_i} \mathbf{z_i}^\top 
\frac{Q_{-i} \mathbf{z_j} \mathbf{z_j}^\top Q_{-i,j}}{1+\tilde \delta_i}
\Sigma_z
\frac{Q_{-j} \mathbf{z_i} \mathbf{z_i}^\top Q_{-i,j}}{1+\tilde \delta_j}
 \mathbf{z_j} \mathbf{u_j}^\top \Theta_\ast\right]
\end{aligned}  
\end{equation*}
It is now straightforward that 
\begin{equation*}
    \kappa_1 = \frac{1}{N^2}\sum_{\substack{i,j=1 \\ i \ne j}}^N \frac{ \Theta_\ast^{\top} \Sigma_{uz} 
\mathbb{E}[Q_{-i,j}
\Sigma_z
Q_{-i,j}]
 \Sigma_{uz}^\top \Theta_\ast}{(1+\tilde \delta_i)(1+\tilde \delta_j)}
\end{equation*}
For $\kappa_2$, we will redo Sherman-Morisson (equation \ref{eq:schur1})
\begin{equation*}
\begin{aligned}
   & \frac{1}{N^2}\sum_{\substack{i,j=1 \\ i \ne j}}^N \frac{1}{N}\mathbb{E} \left[ \Theta_\ast^{\top} \mathbf{u_i} \mathbf{z_i}^\top 
\frac{Q_{-i}\mathbf{z_j}\mathbf{z_j}^\top Q_{-i,j}}{1+\tilde \delta_i}
\Sigma_z
\frac{Q_{-j}}{1+\tilde \delta_j}
 \mathbf{z_j} \mathbf{u_j}^\top \Theta_\ast\right] \\
 &  = \frac{1}{N^2}\sum_{\substack{i,j=1 \\ i \ne j}}^N \frac{1}{N}\mathbb{E} \left[ \Theta_\ast^{\top} \mathbf{u_i} \mathbf{z_i}^\top 
\frac{Q_{-i} \mathbf{z_j}\mathbf{z_j}^\top Q_{-i,j}}{1+\tilde \delta_i}
\Sigma_z
\frac{Q_{-i,j}}{1+\tilde \delta_j}
 \mathbf{z_j} \mathbf{u_j}^\top \Theta_\ast\right] \\
 & - \frac{1}{N^2}\sum_{\substack{i,j=1 \\ i \ne j}}^N \frac{1}{N^2}\mathbb{E} \left[ \Theta_\ast^{\top} \mathbf{u_i} \mathbf{z_i}^\top 
\frac{Q_{-i,j} \mathbf{z_j} \mathbf{z_j}^\top Q_{-i,j}}{(1+\tilde \delta_i)(1+\frac{1}{N} \mathbf{z_j}^\top Q_{-i,j}\mathbf{z_j})}
\Sigma_z
\frac{Q_{-i,j}\mathbf{z_i} \mathbf{z_i}^\top Q_{-i,j}}{(1+\tilde \delta_j)(1+\frac{1}{N}\mathbf{z_i}^\top Q_{-i,j}\mathbf{z_i})}
 \mathbf{z_j} \mathbf{u_j}^\top \Theta_\ast\right]
\end{aligned}
\end{equation*}
The second term is negligible. In fact using Hanson Wright and the fact that $\|\boldsymbol{\mu_z}\|_2 =O(1)$ ($\mathbb{E}[\mathbf{z_i}A\mathbf{z_j}] = \boldsymbol{\mu_z} A \boldsymbol{\mu_z}, \; i\neq j$ with $\|A\|=O(1))$ , we have $\mathbb{E}[|\mathbf{z_i}A\mathbf{z_j}|^k] =O(\sqrt{N}^k)$. We have 3 random variables of that type, so using Cauchy-Schwarz, we get that the second term is $O(\frac{\sqrt{N}^3}{N^2}) = o(1)$.

We thus have 
\begin{equation*}
    \kappa_2 - (- \frac{1}{N^2}\sum_{\substack{i,j=1 \\ i \ne j}}^N \frac{1}{N}\mathbb{E} \left[ \Theta_\ast^{\top} \mathbf{u_i} \mathbf{z_i}^\top 
\frac{Q_{-i}\mathbf{z_j}\mathbf{z_j}^\top Q_{-i,j}}{1+\tilde \delta_i}
\Sigma_z
\frac{Q_{-i,j}}{1+\tilde \delta_j}
 \mathbf{z_j} \mathbf{u_j}^\top \Theta_\ast\right] ) \to 0
\end{equation*}
We then use the concentration of the random variable $\eta_{i,j} = \frac{1}{N} \mathbf{z_j} Q_{-i,-j}\Sigma_z Q_{-i,j} \mathbf{z_j}$ around its expectation $\tilde \eta_{i,j}$. To justify it, we take the difference
\begin{equation*}
\begin{aligned}
&\frac{1}{N^2}\sum_{\substack{i,j=1 \\ i \ne j}}^N \frac{1}{N}\mathbb{E} \left[ \Theta_\ast^{\top} \mathbf{u_i} \mathbf{z_i}^\top 
Q_{-i,j}z_j \frac{\eta_{i,j}-\tilde \eta_{i,j}}{(1+\tilde \delta_i)(1+\delta_{i,j})(1+\tilde \delta_j)}\mathbf{u_j}^\top \Theta_\ast\right] \\
&= \frac{1}{N^2} \sum_{i=1}^N \mathbb{E} \left[ \Theta_\ast^{\top} \mathbf{u_i} \mathbf{z_i}^\top 
Q_{-i} \sum_{\substack{j=1 \\ i \ne j}}^N \mathbf{z_j} \frac{\eta_{i,j}-\tilde \eta_{i,j}}{(1+\tilde \delta_i)(1+\tilde \delta_j)}u_j^\top \Theta_\ast\right]\\
&= \frac{1}{N^2} \sum_{i=1}^N \mathbb{E} \left[ \Theta_\ast^{\top} \Sigma_{uz} 
Q_{-i} \sum_{\substack{j=1 \\ i \ne j}}^N \mathbf{z_j} \frac{\eta_{i,j}-\tilde \eta_{i,j}}{(1+\tilde \delta_i)(1+\tilde \delta_j)}\mathbf{u_j}^\top \Theta_\ast\right]\\
&= \frac{1}{N^2} \sum_{i=1}^N \mathbb{E} \left[ \Theta_\ast^{\top} \Sigma_{uz}
Q_{-i} Z^\top \Delta_i U \Theta_\ast\right]
\end{aligned}
\end{equation*}
where $\Delta_i = \operatorname{diag}(\frac{\eta_{i,j}-\tilde \eta_{i,j}}{(1+\tilde \delta_i)(1+\tilde \delta_j)}, j \in \{ 1,\ldots,N \} \backslash\{i\} \text{ and } 0 \text{ for $i=j$})$ and  $\mathbb{E}[\| \Delta_i\|^k] = O((\frac{logN}{\sqrt{N}})^k), k \in \mathbb{N}$. This means that
\begin{equation*}
    \kappa_2 - ( - \frac{1}{N^2}\sum_{\substack{i,j=1 \\ i \ne j}}^N \frac{\tilde \eta_{i,j}}{(1+\tilde \delta_i)(1+\tilde \delta_j)}\mathbb{E} \left[ \Theta_\ast^{\top} \mathbf{u_i} \mathbf{z_i}^\top 
Q_{-i}\mathbf{z_j}  \mathbf{u_j}^\top \Theta_\ast\right] ) \to 0
\end{equation*}
As we did above, this gives as that 
\begin{equation*}
\begin{aligned}
    \kappa_2-
(- \frac{1}{N^2}\sum_{\substack{i,j=1 \\ i \ne j}}^N \frac{\tilde \eta_{i,j}}{(1+\tilde \delta_i)(1+\tilde \delta_j)(1+\tilde \delta_{i,j})} \Theta_\ast^{\top} \Sigma_{uz} 
\mathbb{E}[Q_{-i,j}]\Sigma_{uz}^\top \Theta_\ast)\to 0
\end{aligned}
\end{equation*}

$\kappa_3$ is the same technique and $\kappa_4$ is negligible as we did above.

The limit of $\mathbb{E}[Q\Sigma_zQ]$ is also a classical result in random matrix theory :
\begin{lemma}[\cite{couillet2022random}]\label{lem:secondorder}
Let $\mathcal{Q} := \frac{(1+\delta)^2}{(1+\delta)^2-\frac{1}{N}\operatorname{Tr}(\Sigma_z\bar Q\Sigma_z\bar Q)}\bar Q\Sigma_z\bar Q$, we have
    \begin{equation*}
        \mathbb{E}[Q\Sigma_zQ - \mathcal{Q}] \to 0.  
    \end{equation*}
\end{lemma}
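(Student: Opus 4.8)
\emph{Plan.} This is the standard second-order deterministic equivalent, and I would obtain it by iterating once more the resolvent-expansion argument that produced the first-order equivalent $\mathbb{E}[Q]\to\bar Q$. Write $\bar Q^{-1}=\frac{\Sigma_z}{1+\delta}+\lambda I_n$ and $Q^{-1}=\frac1N\sum_{j=1}^N z_jz_j^\top+\lambda I_n$. Applying the identity $Q=\bar Q-\bar Q\bigl(Q^{-1}-\bar Q^{-1}\bigr)Q$ to the leftmost factor of $Q\Sigma_zQ$ gives
\begin{equation*}
\mathbb{E}[Q\Sigma_zQ]=\bar Q\Sigma_z\,\mathbb{E}[Q]+\frac{1}{1+\delta}\,\bar Q\Sigma_z\,\mathbb{E}[Q\Sigma_zQ]-\frac1N\sum_{j=1}^N\mathbb{E}\bigl[\bar Q\,z_jz_j^\top Q\Sigma_zQ\bigr],
\end{equation*}
and the first term is $\bar Q\Sigma_z\bar Q+o(1)$ by the first-order lemma (here and below $o(1)$ denotes a matrix vanishing in operator norm). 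For the rank-one terms I would apply Lemma~\ref{lemm:Schur} twice, $z_j^\top Q=\frac{z_j^\top Q_{-j}}{1+\delta_j}$ and then $Q=Q_{-j}-\frac1N\frac{Q_{-j}z_jz_j^\top Q_{-j}}{1+\delta_j}$, to rewrite each summand purely in terms of $Q_{-j}$ and the scalars $\delta_j=\frac1N z_j^\top Q_{-j}z_j$, $\xi_j=\frac1N z_j^\top Q_{-j}\Sigma_zQ_{-j}z_j$:
\begin{equation*}
z_j^\top Q\Sigma_zQ=\frac{1}{1+\delta_j}\,z_j^\top Q_{-j}\Sigma_zQ_{-j}-\frac{\xi_j}{(1+\delta_j)^2}\,z_j^\top Q_{-j}.
\end{equation*}

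Next I would replace $\delta_j$ by $\delta$ and $\xi_j$ by $\tilde\xi_j=\frac1N\operatorname{Tr}(\Sigma_z\,\mathbb{E}[Q_{-j}\Sigma_zQ_{-j}])$ using concentration of quadratic forms (Lemma~\ref{lemm:concentration}) together with $\tilde\delta_j-\delta\to0$; then use independence of $z_j$ from $Q_{-j}$ so that $\mathbb{E}[z_jz_j^\top(\cdot)]=\Sigma_z\,\mathbb{E}[(\cdot)]$; and finally swap $Q_{-j}$ for $Q$ and $\mathbb{E}[Q_{-j}]$ for $\bar Q$, at the cost of normalized traces that vanish. Summing over $j$, the two $\bar Q\Sigma_z\,\mathbb{E}[Q\Sigma_zQ]$ contributions cancel and one is left with the self-consistent matrix identity
\begin{equation*}
\mathbb{E}[Q\Sigma_zQ]=\bar Q\Sigma_z\bar Q+\frac{\tfrac1N\operatorname{Tr}\!\bigl(\Sigma_z\,\mathbb{E}[Q\Sigma_zQ]\bigr)}{(1+\delta)^2}\,\bar Q\Sigma_z\bar Q+o(1).
\end{equation*}
Applying $\frac1N\operatorname{Tr}(\Sigma_z\,\cdot)$ turns this into a scalar equation for $\beta:=\frac1N\operatorname{Tr}(\Sigma_z\,\mathbb{E}[Q\Sigma_zQ])$, namely $\beta=\alpha'+\frac{\alpha'}{(1+\delta)^2}\beta+o(1)$ with $\alpha':=\frac1N\operatorname{Tr}(\Sigma_z\bar Q\Sigma_z\bar Q)=(1+\delta)^2\alpha$; solving gives $\beta=\frac{(1+\delta)^2\alpha'}{(1+\delta)^2-\alpha'}+o(1)$, and substituting back yields $\mathbb{E}[Q\Sigma_zQ]=\frac{(1+\delta)^2}{(1+\delta)^2-\alpha'}\,\bar Q\Sigma_z\bar Q+o(1)=\mathcal Q+o(1)$, which is the claim.

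For the error bookkeeping I would reuse the estimates already in hand: $\|Q\|\le 1/\lambda$, $\|\Sigma_z\|=O(1)$ (Lemma~\ref{lem:z_concentrated}), $\mathbb{E}\|Z\|^k=O((\sqrt N+\sqrt n)^k)$ (Lemma~\ref{lem:operatornorm}), and $\mathbb{E}\|\Delta\|^k=O((\log N/\sqrt N)^k)$ for the diagonal matrices collecting the fluctuations $\delta_j-\delta$, $\xi_j-\tilde\xi_j$; Cauchy--Schwarz then makes each replacement $o(1)$ uniformly, so the whole identity holds as a deterministic equivalent tested against bounded matrices. The step I expect to require the most care is the resolution of the fixed point, specifically checking that the denominator $(1+\delta)^2-\alpha'=(1+\delta)^2(1-\alpha)$ stays bounded away from $0$. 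This holds because $\bar Q\,\tfrac{\Sigma_z}{1+\delta}=I_n-\lambda\bar Q$ is symmetric with eigenvalues in $[0,1)$, so $\alpha=\frac1N\operatorname{Tr}\bigl((\bar Q\tfrac{\Sigma_z}{1+\delta})^2\bigr)\le\frac1N\operatorname{Tr}\bigl(\bar Q\tfrac{\Sigma_z}{1+\delta}\bigr)=\frac{\delta}{1+\delta}<1$, whence $1-\alpha\ge\frac{1}{1+\delta}$ and $(1+\delta)^2-\alpha'\ge 1+\delta\ge 1$; moreover $\delta=\frac1N\operatorname{Tr}(\Sigma_z\bar Q)\le\frac{n}{N}\|\Sigma_z\|/\lambda=O(1)$, so all scalars remain in a fixed compact range and the argument is uniform. (An alternative route would be to differentiate the first-order equivalent along the perturbation $\lambda I_n\mapsto\lambda I_n-t\Sigma_z$ at $t=0$, but this still requires the same uniformity and denominator control.)
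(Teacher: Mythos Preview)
The paper does not prove this lemma; it simply cites it as a classical result from \cite{couillet2022random}. Your sketch is the standard route to second-order deterministic equivalents (expand $Q=\bar Q-\bar Q(Q^{-1}-\bar Q^{-1})Q$, leave-one-out via Sherman--Morrison, concentrate the quadratic forms $\delta_j,\xi_j$, and close the resulting self-consistent equation), and it is the approach one finds in the cited reference; the algebra and the denominator bound $1-\alpha\ge(1+\delta)^{-1}$ are correct. One small point of hygiene: you declare $o(1)$ to mean ``vanishing in operator norm'' and then close with ``tested against bounded matrices''; the latter weak sense is what the replacement estimates actually deliver (and what the paper's first-order lemma states), so it would be cleaner to commit to that throughout.
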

Now as we did above in the step 3 limit \ref{para:limit}, we can replace everything with its limit, then we get the desired result after simplifications.
\subsection{Variance Term}
In the same manner, we want the limit of
\begin{equation*}
    \mathbb{E} \left[
 \left(  
z_0^\top \left( \frac{Z^\top Z}{N} + \lambda I_n \right)^{-1} \frac{Z^\top Z}{N^2} \left( \frac{Z^\top Z}{N} + \lambda I_n \right)^{-1} z_0
\right) 
\right] = \mathbb{E} \left[ \operatorname{Tr} \left( Q \frac{Z^\top Z}{N^2} Q \Sigma_z \right)  \right],
\end{equation*}
We write:
\begin{equation*}
\begin{aligned}
    \mathbb{E} \left[  \operatorname{Tr} \left( Q \frac{Z^\top Z}{N^2} Q \Sigma_z  \right) \right] &= \frac{1}{N^2}\sum_{i=1}^N \mathbb{E} \left[  \operatorname{Tr} \left( Q \mathbf{z_i} \mathbf{z_i}^\top Q \Sigma_z \right)  \right] \\
    &= \frac{1}{N^2}\sum_{i=1}^N \mathbb{E} \left[ \operatorname{Tr} \left( \frac{Q_{-i}}{1+\frac{1}{N}\mathbf{z_i}^\top Q_{-i}\mathbf{z_i}} \mathbf{z_i} \mathbf{z_i}^\top \frac{Q_{-i}}{1+\frac{1}{N}\mathbf{z_i}^\top Q_{-i}\mathbf{z_i}} \Sigma_z \right)  \right]
\end{aligned}
\end{equation*}
As we did for the second term of the bias, we can here replace $\frac{1}{N}\mathbf{z_i}^\top Q_{-i} \mathbf{z_i}$ with its expectation in the limit. Then using \Cref{lem:secondorder}, we get the desired result after simplification.

\section{Proof of theorem 2}\label{proof:th2}
Let  $\rho':=\varphi \cdot\rho(W_0)$, we have
\[
S = \begin{bmatrix}
\left(\tfrac{W_0}{\rho'}\right)^{T-1} \mathbf{w}_{\text{in}} ,
\left(\tfrac{W_0}{\rho'}\right)^{T-2} \mathbf{w}_{\text{in}}  ,
\ldots ,
\left(\tfrac{W_0}{\rho'}\right)^0 \mathbf{w}_{\text{in}}
\end{bmatrix}  \in \mathbb{R}^{n \times T}
\]
denote the state matrix built from the input weight vector \(\mathbf{w}_{\rm in} \in \mathbb{R}^n\) and recurrent matrix \(W \in \mathbb{R}^{n \times n}\). We get $Z = SU$.

\begin{lemma}[Concentration of $S^\top S$]
Let \( W_0 \in \mathbb{R}^{n \times n} \) have i.i.d.\ standard Gaussian entries and let \( \mathbf{w}_{\mathrm{in}} \sim \mathcal{N}(0, \frac{1}{n}I_n) \)
and \( \rho' = \varphi \cdot \rho(W_0) \) with fixed \( \varphi < 1 \), and let \( S \in \mathbb{R}^{T \times n} \) be defined as in \ref{eq:S}. Then, for some constant \( C > 0 \) depending only on \( T \) and \( \varphi \), we have
\[
\mathbb{P}\left( \max_{i,j \leq T} \left| [S^\top S]_{ij} - \mathbb{E}[S^\top S]_{ij} \right| > \frac{C}{\sqrt{n}} \right) \leq c_1 e^{-c_2 n}
\]
for some constants \( c_1, c_2 > 0 \). In particular,
\[
\| S^\top S - \mathbb{E}[S^\top S] \|_F = O_{\mathbb{P}} \left( \frac{1}{\sqrt{n}} \right).
\]
\end{lemma}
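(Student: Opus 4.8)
The plan is to write each entry of $S^\top S$ as a Gaussian quadratic form in $\mathbf{w}_{\mathrm{in}}$ whose matrix is a polynomial in $W_0$, and then to control the randomness in $W_0$ and in $\mathbf{w}_{\mathrm{in}}$ separately. Unfolding the linear ESN recursion gives
\[
[S^\top S]_{ij} \;=\; \frac{\mathbf{w}_{\mathrm{in}}^\top (W_0^\top)^{T-i} W_0^{T-j}\,\mathbf{w}_{\mathrm{in}}}{\bigl(\varphi\,\rho(W_0)\bigr)^{2T-i-j}} \;=:\; \mathbf{w}_{\mathrm{in}}^\top M_{ij}\,\mathbf{w}_{\mathrm{in}},
\qquad 1\le i,j\le T .
\]
First I would isolate a regularity event for the reservoir, $\mathcal{E} := \bigl\{\tfrac12\sqrt n \le \rho(W_0) \le \|W_0\| \le 3\sqrt n\bigr\}$, and show $\mathbb{P}(\mathcal{E}^c) \le c_1 e^{-c_2 n}$. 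The bound $\|W_0\|\le 3\sqrt n$ is the standard Gaussian concentration of the operator norm (a $1$-Lipschitz function of the i.i.d.\ entries, centered near $2\sqrt n$), which also yields $\rho(W_0)\le\|W_0\|$ for free. The lower bound $\rho(W_0)\ge \tfrac12\sqrt n$ is the delicate part: the event that the spectrum of $W_0/\sqrt n$ lies in a disk of radius $1/2$ is a large deviation for the Ginibre ensemble, of probability $\le e^{-cn}$ (in fact much smaller), which one can obtain e.g.\ from $\rho(W_0)\ge |\det W_0|^{1/n}$ together with concentration of $\tfrac1n\log|\det W_0|$ around $\tfrac12\log n-\tfrac12$.

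On $\mathcal{E}$, since $T$ is a fixed constant, $\|M_{ij}\| \le \|W_0\|^{2T-i-j}/(\varphi\rho(W_0))^{2T-i-j} \le (6/\varphi)^{2T-2} =: C_T$, so $\|M_{ij}\|_F \le C_T\sqrt n$. Conditionally on $W_0$ (which is independent of $\mathbf{w}_{\mathrm{in}}\sim\mathcal N(0,\tfrac1n I_n)$), the Hanson--Wright inequality applied to $\mathbf{w}_{\mathrm{in}}^\top M_{ij}\mathbf{w}_{\mathrm{in}}$ gives, for $W_0\in\mathcal E$ and all $s>0$,
\[
\mathbb{P}\left(\left|[S^\top S]_{ij} - \tfrac1n\operatorname{Tr} M_{ij}\right| > s \,\middle|\, W_0\right) \;\le\; 2\exp\!\left(-c\,\min\left(\tfrac{n s^2}{C_T^2},\tfrac{n s}{C_T}\right)\right),
\]
so each entry concentrates around its conditional mean $\tfrac1n\operatorname{Tr} M_{ij}$ at the Hanson--Wright scale $\|M_{ij}\|_F/n = O(1/\sqrt n)$.

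It remains to pass from the conditional to the unconditional mean and to conclude. Writing $\tfrac1n\operatorname{Tr} M_{ij} = (\varphi\rho(W_0))^{-(2T-i-j)}\,\tfrac1n\operatorname{Tr}\!\bigl((W_0^\top)^{T-i}W_0^{T-j}\bigr)$, the trace is a fixed-degree polynomial in the Gaussian entries of $W_0$ whose variance is controlled by a Wick computation, and $\rho(W_0)/\sqrt n\to1$ with tight concentration; together these show $\tfrac1n\operatorname{Tr} M_{ij}$ deviates from $\mathbb E[S^\top S]_{ij}$ by $o_{\mathbb P}(1/\sqrt n)$, so the rate is unaffected. (That $\mathbb E[S^\top S]$ is even well defined, i.e.\ $\mathbb E|[S^\top S]_{ij}|<\infty$, follows from the independence of $\mathbf{w}_{\mathrm{in}}$ and $W_0$, finiteness of all Gaussian moments of $\|\mathbf{w}_{\mathrm{in}}\|$ and $\|W_0\|$, and $\mathbb P(\rho(W_0)<r)$ decaying faster than any power of $r$.) A union bound over the $T^2$ pairs $(i,j)$, together with $\mathbb P(\mathcal E^c)\le c_1 e^{-c_2 n}$, then gives the entrywise statement, and $\|S^\top S-\mathbb E[S^\top S]\|_F \le T\max_{i,j}|[S^\top S]_{ij}-\mathbb E[S^\top S]_{ij}|$ gives the Frobenius bound $O_{\mathbb P}(1/\sqrt n)$ since $T$ is fixed.

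The step I expect to be the main obstacle is handling the random normalisation $\rho(W_0)^{-(2T-i-j)}$: unlike $\|W_0\|$, the spectral radius is not a Lipschitz function of $W_0$, so one cannot simply apply Gaussian concentration of measure to $[S^\top S]_{ij}$ as a whole and must instead peel off the deterministic event $\mathcal E$ using ensemble-specific (non-Hermitian) estimates for the spectral radius of a Ginibre matrix, and then verify that the residual fluctuation of the normalisation is genuinely of lower order than $1/\sqrt n$.
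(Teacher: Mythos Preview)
Your approach is essentially the same as the paper's: write $[S^\top S]_{ij}=\mathbf{w}_{\mathrm{in}}^\top A_{ij}\mathbf{w}_{\mathrm{in}}$ with $A_{ij}=(W_0^\top/\rho')^{T-i}(W_0/\rho')^{T-j}$, bound $\|A_{ij}\|$ via a deterministic control of $\|W_0\|/\rho(W_0)$, apply Hanson--Wright conditionally on $W_0$, and take a union bound over the $T^2$ entries. The paper invokes the almost-sure limits $\rho(W_0)/\sqrt n\to1$ and $\|W_0\|/\sqrt n\to2$ directly rather than isolating an explicit event $\mathcal E$, and it does not separately discuss the passage from the conditional mean $\tfrac1n\operatorname{Tr} A_{ij}$ to the full mean $\mathbb E[S^\top S]_{ij}$; your write-up is more careful on both points, and you correctly flag the lower tail of $\rho(W_0)$ as the step requiring non-Hermitian input.
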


\begin{proof}
For each \( i, j \in \{0, \dots, T-1\} \), define
\[
A_{ij} := \left( \frac{W_0^\top}{\rho'} \right)^i \left( \frac{W_0}{\rho'} \right)^j \in \mathbb{R}^{n \times n},
\]
so that
\[
[S^\top S]_{ij} = \langle S_i, S_j \rangle = \mathbf{w}_{\mathrm{in}}^\top A_{ij} \mathbf{w}_{\mathrm{in}}.
\]

From \cite{Alt_2021} we have almost surely as $n$ tends to infinity $\rho(W_0)/\sqrt n \to 1$, and from Bai-Yin Theorem (\cite{noauthororeditor} + Remark on i.i.d. standard random matrices), we have almost surely, $\|W_0\|/\sqrt n \to 2$. 
Fix $\varepsilon \in (0,1)$. Then, almost surely for all sufficiently large $n$,
\[
\rho(W_0) \ge (1-\varepsilon)\sqrt{n},
\qquad
\|W_0\| \le (2+\varepsilon)\sqrt{n}.
\]
Since $\rho' = \varphi \rho(W_0)$ with fixed $\varphi \in (0,1)$, it follows that
\[
\Big\|\frac{W_0}{\rho'}\Big\|
= \frac{\|W_0\|}{\varphi \rho(W_0)}
\le \frac{2+\varepsilon}{\varphi(1-\varepsilon)}
=: C_\varepsilon,
\]
for all large $n$. In particular, for each $i,j \leq T-1$,
\[
\|A_{ij}\| 
= \Big\|\Big(\frac{W_0^\top}{\rho'}\Big)^i \Big(\frac{W_0}{\rho'}\Big)^j\Big\|
\le \|W_0/\rho'\|^{\,i+j}
\le C_\varepsilon^{\,2(T-1)} =: M,
\]
and hence $\|A_{ij}\|_F \le \sqrt{n}\,M$.

Now write $X := \sqrt{n}\,\mathbf{w}_{\mathrm{in}} \sim \mathcal N(0,I_n)$. Then
\[
[S^\top S]_{ij} - \mathbb{E}[S^\top S]_{ij}
= \frac{1}{n}\Big(X^\top A_{ij}X - \mathbb{E}[X^\top A_{ij}X]\Big).
\]
By the Hanson--Wright inequality ($W_0$ independent of $X$),
\[
\mathbb{P}\!\left(\Big|X^\top A_{ij}X - \mathbb{E}[X^\top A_{ij}X]\Big| > u \right)
\le 2 \exp\!\left(-c \min\Big\{\tfrac{u^2}{\|A_{ij}\|_F^2},\,\tfrac{u}{\|A_{ij}\|}\Big\}\right).
\]
Taking $u = n t$ and using $\|A_{ij}\|\le M$, $\|A_{ij}\|_F \le \sqrt{n}M$ gives
\[
\mathbb{P}\!\left(\big|[S^\top S]_{ij} - \mathbb{E}[S^\top S]_{ij}\big| > t\right)
\le 2 \exp\!\left(-c \min\Big\{\tfrac{n t^2}{M^2},\,\tfrac{n t}{M}\Big\}\right).
\]
In particular, for $t = x/\sqrt{n}$,
\[
\mathbb{P}\!\left(\big|[S^\top S]_{ij} - \mathbb{E}[S^\top S]_{ij}\big| > \tfrac{x}{\sqrt{n}} \right)
\le 2 \exp\!\left(-c \,\frac{x^2}{M^2}\right).
\]
A union bound over all $i,j\le T-1$ yields
\[
\mathbb{P}\!\left(\max_{i,j} \big|[S^\top S]_{ij} - \mathbb{E}[S^\top S]_{ij}\big|
> \tfrac{x}{\sqrt{n}}\right)
\le 2 T^2 \exp\!\left(-c \,\frac{x^2}{M^2}\right),
\]
\end{proof}

\begin{lemma}[Limit of \ensuremath{\mathbb{E}[S^\top S]}]
As $T$ is fixed, we have as \( n \to \infty \), 
\[
\mathbb{E}[S^{\!\top}S]
\longrightarrow
\operatorname{diag}\left(\varphi^{-(T-1)},\,\varphi^{-(T-2)},\,\dots,\,\varphi^{-1},\,1\right) \quad \] 
\end{lemma}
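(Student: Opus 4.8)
The plan is to evaluate $(\mathbb{E}[S^\top S])_{k\ell}$ entrywise as a trace moment of $W_0$ and then read off its leading order. Writing the $k$-th column of $S$ as $\mathbf{s}_k=(W_0/\rho')^{\,T-k}\mathbf{w}_{\rm in}$ with $\rho'=\varphi\,\rho(W_0)$, and using that $\mathbf{w}_{\rm in}\sim\mathcal N(0,\tfrac1n I_n)$ is independent of $W_0$ with $\mathbb{E}_{\mathbf{w}_{\rm in}}[\mathbf{w}_{\rm in}^\top A\mathbf{w}_{\rm in}]=\tfrac1n\operatorname{Tr}A$, conditioning on $W_0$ gives
\[
(\mathbb{E}[S^\top S])_{k\ell}
=\frac1n\,\mathbb{E}_{W_0}\!\Big[(\rho')^{-(2T-k-\ell)}\,\operatorname{Tr}\big((W_0^\top)^{T-k}W_0^{T-\ell}\big)\Big].
\]
First I would replace the random scalar $\rho(W_0)$ by $\sqrt n$: the circular-law edge \cite{Alt_2021} gives $\rho(W_0)/\sqrt n\to1$ a.s.\ with exponential concentration, and Bai--Yin gives $\|W_0\|\le(2+\varepsilon)\sqrt n$ with probability $1-o(1)$; truncating on the event $\{(1-\varepsilon)\sqrt n\le\rho(W_0)\le\|W_0\|\le(2+\varepsilon)\sqrt n\}$ and controlling its complement through $|\operatorname{Tr}((W_0^\top)^aW_0^b)|\le n\|W_0\|^{a+b}$ together with Gaussian moment bounds on $\|W_0\|$ yields
\[
(\mathbb{E}[S^\top S])_{k\ell}
=\big(1+o(1)\big)\,\frac{\mathbb{E}_{W_0}[\operatorname{Tr}((W_0^\top)^{T-k}W_0^{T-\ell})]}{\varphi^{\,2T-k-\ell}\,n^{\,1+(2T-k-\ell)/2}}.
\]

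The heart of the proof is then the trace asymptotics: for fixed integers $0\le a,b\le T-1$,
\[
\mathbb{E}_{W_0}\!\big[\operatorname{Tr}\big((W_0^\top)^{a}W_0^{b}\big)\big]=
\begin{cases}n^{\,a+1}\,(1+o(1)),& a=b,\\[2pt] O\!\big(n^{\,(a+b)/2}\big)=o\!\big(n^{\,1+(a+b)/2}\big),& a\neq b,\end{cases}
\]
which I would establish by the moment method: expanding the trace over closed walks of length $a+b$ and applying Wick's formula turns the expectation into a sum over perfect matchings of the $a+b$ Gaussian factors, each matching contributing $n$ to the power of its number of free index variables. For $a=b$, the ``folded'' matching pairing the $r$-th factor of $(W_0^\top)^a$ with the $r$-th factor of $W_0^a$ is the unique one leaving $a+1$ free indices; the cleanest way to pin down its constant is the exact recursion $\mathbb{E}[\operatorname{Tr}(W_0^a(W_0^\top)^a)]=n\,\mathbb{E}[\operatorname{Tr}(W_0^{a-1}(W_0^\top)^{a-1})]\,(1+o(1))$, obtained by integrating out the outermost $W_0$ (cross-pairings with inner factors each cost a factor $n^{-1}$), with base case $\operatorname{Tr}I_n=n$, so the leading coefficient is exactly $1$ (unlike $\operatorname{Tr}((W_0^\top W_0)^a)\sim C_a\,n^{a+1}$). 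For $a\neq b$, say $a<b$, write $\operatorname{Tr}((W_0^\top)^aW_0^b)=\operatorname{Tr}(P_aW_0^{\,b-a})$ with $P_a:=(W_0^\top)^aW_0^a\succeq 0$ and $\mathbb{E}[\operatorname{Tr}P_a]=n^{a+1}(1+o(1))$; since $\mathbb{E}[W_0^m]=c_m I_n$ (rotational invariance) with $c_m=0$ for odd $m$ and $c_m=\tfrac1n\mathbb{E}[\operatorname{Tr}W_0^m]=O(n^{m/2-1})$ for even $m$ (directed closed walks being scarce), the leading term is of order $n^{a+1}\cdot n^{(b-a)/2-1}=n^{(a+b)/2}$, the dependence between $P_a$ and $W_0^{\,b-a}$ contributing only at lower order.

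Inserting these into the entrywise formula finishes the argument: off-diagonal entries ($k\neq\ell$) are $o(1)$ and vanish, while each diagonal entry converges to the corresponding power of $\varphi$, hence $\mathbb{E}[S^\top S]\to\operatorname{diag}(\varphi^{i-T})_{1\le i\le T}$; combined with the $O(n^{-1/2})$ entrywise concentration of the previous lemma this also upgrades to convergence of $S^\top S$ itself, as needed in the proof of \Cref{thm:linear_esn}. The main obstacle is the trace-asymptotics step: one must control, uniformly in $n$, the subleading (crossing) matchings in the moment expansion, both to isolate the $a=b$ leading constant and to rule out an order-$n^{1+(a+b)/2}$ contribution when $a\neq b$; by contrast, the replacement of $\rho(W_0)$ by $\sqrt n$ is routine given Bai--Yin and the circular-law edge.
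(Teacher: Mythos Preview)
Your approach is essentially the paper's: write $(S^\top S)_{k\ell}$ as $\tfrac{1}{n}(\rho')^{-(2T-k-\ell)}\operatorname{Tr}\big((W_0^\top)^{T-k}W_0^{T-\ell}\big)$ after integrating out $\mathbf{w}_{\rm in}$, replace $\rho(W_0)$ by $\sqrt{n}$ via the circular-law edge, and reduce everything to the trace asymptotics $\tfrac{1}{n}\operatorname{Tr}\big((W_0/\sqrt{n})^{\top a}(W_0/\sqrt{n})^{b}\big)\to\delta_{ab}$. The only structural difference is that the paper simply cites this last fact (the non-crossing pairing combinatorics of \cite{kemp2009enumerationnoncrossingpairingsbit}) and then passes from almost-sure convergence to expectation by uniform integrability, whereas you try to re-derive the trace asymptotics by hand and work in expectation throughout via a truncation on $\{\rho(W_0),\|W_0\|\}$.

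The one place your sketch is genuinely incomplete is the off-diagonal case $a\neq b$. Writing $\operatorname{Tr}\big((W_0^\top)^a W_0^b\big)=\operatorname{Tr}(P_a\,W_0^{\,b-a})$ with $P_a=(W_0^\top)^a W_0^a$ and then invoking $\mathbb{E}[W_0^{\,b-a}]=c_{b-a}I_n$ only gives the claimed $O(n^{(a+b)/2})$ bound if you can split the expectation, but $P_a$ and $W_0^{\,b-a}$ are built from the \emph{same} $W_0$, so the Wick expansion has many cross-pairings between the two blocks, and you have not shown these are lower order. The honest route is the one the cited reference takes: expand the trace directly over index sequences, apply Wick, and show that every pairing on the word $*^a\,\cdot^b$ with the maximal number of free indices must be non-crossing and must pair each $*$ with a non-$*$, which is impossible when $a\neq b$. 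Your recursion for the $a=b$ case is fine and does give leading constant $1$, and your handling of the $\rho(W_0)\leftrightarrow\sqrt{n}$ replacement is routine, as you say; it is only the ``dependence contributing at lower order'' step that needs to be made precise.
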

\begin{proof}
We have
\begin{equation*}
[S^\top S]_{i,j} = \left\langle \left(\tfrac{W_0}{\rho'}\right)^{T-i} \mathbf{w}_{\text{in}}, \left(\tfrac{W_0}{\rho'}\right)^{T-j} \mathbf{w}_{\text{in}} \right\rangle, \quad 1 \leq i,j \leq T
\end{equation*}

Because $\bm w_{\mathrm{in}}$ is independent of $W_0$, we get
\[
\mathbb E_{\mathbf{w_{\rm in}}}\bigl[S^{\!\top}S\bigr]_{ij}
   = \frac{1}{n}(\rho')^{-2T+i+j}\,
        \operatorname{Tr}\bigl(W_0^{\top\,(T-i)}W_0^{\,T-j}\bigr).
\]
Since $W_0$ has i.i.d. $\mathcal{N}(0,1)$, we have, using \cite{kemp2009enumerationnoncrossingpairingsbit} [Prop + Remark 1.4, Prop 1.7], that
\[
\frac{1}{n}
     \operatorname{Tr}\bigl((\frac{W_0}{\sqrt n})^{\top\,(T-i)}(\frac{W_0}{\sqrt n})^{\,T-j}\bigr)
   \xrightarrow{n\to\infty}\delta_{ij} \quad\text{a.s.}
\]

From \cite{Alt_2021}, we have
\(
\rho(W_0)/\sqrt n\to 1
\)
almost surely.

We insert this back to get:
\[
\mathbb E_{\mathbf{w_{\rm in}}}\bigl[S^{\!\top}S\bigr]_{ij} \xrightarrow{n\to\infty} \delta_{ij} \varphi^{-(T-i)}\quad\text{a.s.}
\]
Therefore:
\begin{equation*}
\mathbb E_{\mathbf{w_{\rm in}}}\bigl[S^{\!\top}S\bigr] \xrightarrow{n\to\infty}\operatorname{diag}\bigl(\varphi^{-(T-1)},\;\varphi^{-(T-2)},\;\dots,\;\varphi^{-1},\;1\bigr)\quad\text{a.s.}
\end{equation*}

For every $n$, we have $\mathbb E_{\mathbf{w_{\rm in}}}\bigl[S^{\!\top}S\bigr]_{ij}$ is almost surely bounded. In particular it is uniformly integral, that is for any \( K > 1 \),
\[
\sup_n \mathbb{E}\left[|\mathbb E_{\mathbf{w_{\rm in}}}\bigl[S^{\!\top}S\bigr]_{ij}| \cdot \mathbf{1}_{\{|\mathbb E_{\mathbf{w_{\rm in}}}\bigl[S^{\!\top}S\bigr]_{ij}| > K\}}\right] = 0,
\]
Thus we have the convergence in expectation, that is
\[
\mathbb{E}[S^{\!\top}S]
\longrightarrow
\operatorname{diag}\left(\varphi^{-(T-1)},\,\varphi^{-(T-2)},\,\dots,\,\varphi^{-1},\,1\right) \quad \] 
\end{proof}
Now that we have established the limit of $S^\top S$, we can proceed to compute the limit of the risk. As mentioned in \Cref{rm:thm1}, under the hypothesis that $\mathbf{u}$ is concentrated, the vector $\mathbf{z}$,obtained as a Lipschitz function of $\mathbf{u}$ with a bounded Lipschitz constant, is also concentrated. We can thus apply \Cref{thm:asymp_expressions}, let $\tilde S := S\Sigma_u^\frac{1}{2}$ and $\tilde \Theta_\ast = \Sigma_u^\frac{1}{2} \Theta_\ast  $ We have the asymptotic bias is given by:
\begin{equation*}
  \mathcal{B}^{2}
  \;\longrightarrow\;
  \mathcal{B}_\infty^{2}
  \;:=\;
\frac{1}{1 - \alpha} \Bigg(
    \operatorname{Tr}\!\bigl(\tilde\Theta^{\ast\!\top}\tilde\Theta^{\ast}\bigr) 
    - \frac{2}{1 + \delta} \operatorname{Tr}\!\Bigl(
      \tilde\Theta^{\ast\!\top}
      [\tilde S^{\!\top}\bar{Q}\tilde S]
      \tilde\Theta^{\ast}
    \Bigr)
     + \frac{1}{(1 + \delta)^2} \operatorname{Tr}\!\Bigl(
      \tilde\Theta^{\ast\!\top}
      [\tilde S^{\!\top}\bar{Q}\tilde S]^{2}
      \tilde\Theta^{\ast}
    \Bigr)
\Bigg)
\end{equation*}
where 
\begin{align*}
    \bar{Q} &:= \left( \frac{\tilde S\tilde S^{\!\top}}{1+\delta} + \lambda I_n \right)^{-1},
    \quad 
    \delta = \frac{1}{N}\operatorname{Tr}\!\bigl[\tilde S^{\!\top}\bar{Q}\tilde S\bigr], \quad \alpha = \frac{\operatorname{Tr}([\tilde S^{\!\top}\bar{Q}\tilde S]^{2})}{N(1+\delta)^2}
\end{align*}
Since we know the expectation of \( \tilde{S}^\top \tilde{S} \in \mathbb{R}^{T \times T} \), we will use the Woodbury identity to get it in the expression instead of $\tilde S\tilde S^{\!\top}$:
\begin{equation*}
  \tilde{S}^{\!\top}\bar{Q}\tilde{S} 
  = \tilde{S}^\top \tilde{S} \left( \frac{\tilde{S}^\top \tilde{S}}{1+\delta} + \lambda I_T \right)^{-1}
\end{equation*}
That makes \( \mathcal{B}_\infty^2 \) a function of \( \tilde{S}^\top \tilde{S} \). Our goal is to use the fact that the map
$\tilde{S}^\top \tilde{S} \mapsto \mathcal{B}_\infty^2(\tilde{S}^\top \tilde{S})$
is Lipschitz in Frobenius norm with Lipschitz constant $L=O(1)$, so that we may formally replace \( \tilde{S}^\top \tilde{S} \) by the limit of its expectation inside the global expectation. That is, we want (by defining $M_\infty:= \Sigma_u^{\frac{1}{2}}\operatorname{diag}(\varphi^{(i-T)})\Sigma_u^{\frac{1}{2}}$):

\begin{equation*}
\begin{aligned}
    \left| \mathcal{B}_\infty^2 (\tilde{S}^\top \tilde{S}) - \mathcal{B}_\infty^2 (M_\infty) \right| 
    &\leq \left| \mathcal{B}_\infty^2 (\tilde{S}^\top \tilde{S}) - \mathcal{B}_\infty^2 (\mathbb{E}[\tilde{S}^\top \tilde{S}]) \right| 
    + \left| \mathcal{B}_\infty^2 (\mathbb{E}[\tilde{S}^\top \tilde{S}]) - \mathcal{B}_\infty^2 (M_\infty) \right| \\
    &\leq L \left\| \tilde{S}^\top \tilde{S} - \mathbb{E}[\tilde{S}^\top \tilde{S}] \right\|_F 
    + L \left\| \mathbb{E}[\tilde{S}^\top \tilde{S}] - M_\infty \right\|_F
\end{aligned}
\end{equation*}
So that
\begin{equation*}
\begin{aligned}
    \left| \mathbb{E}[\mathcal{B}_\infty^2 (\tilde{S}^\top \tilde{S})] - \mathcal{B}_\infty^2 (M) \right| 
    &\leq L\, \mathbb{E} \left[ \left\| \tilde{S}^\top \tilde{S} - \mathbb{E}[\tilde{S}^\top \tilde{S}] \right\|_F \right] 
    + L \left\| \mathbb{E}[\tilde{S}^\top \tilde{S}] - M \right\|_F \\
    &\to 0.
\end{aligned}
\end{equation*}

Let us define $M := \tilde{S}^\top \tilde{S} \in \mathbb{R}^{T\times T}$ and let's prove the Lipschitzness of the map
$ M \mapsto \mathcal{B}_\infty^2(M)$. We have

\begin{equation*}
    \mathcal{B}^2_\infty(M) = \frac{1}{1-\alpha(M)} (t_1 - \frac{2}{1+\delta(M)}t_2(M) + \frac{1}{(1+\delta(M))^2}t_3(M))
\end{equation*}
where
\begin{align*}
    t_1 &:= \operatorname{Tr}\!\bigl(\tilde\Theta^{\ast^\top}\tilde\Theta^{\ast}\bigr), \quad
    t_2(M) := \operatorname{Tr}\!\bigl(\tilde\Theta^{\ast^\top} A(M) \tilde\Theta^{\ast}\bigr), \\
    t_3(M) &:= \operatorname{Tr}\!\bigl(\tilde\Theta^{\ast^\top} A(M)^2 \tilde\Theta^{\ast}\bigr), \quad
    A(M) := M \left(\tfrac{M}{1+\delta} + \lambda I_T \right)^{-1},\\
    \delta(M)&:=\frac{1}{N}\operatorname{Tr}(M(\frac{M}{1+\delta(M)}+\lambda I_T)^{-1}), \quad \alpha(M) := \frac{\operatorname{Tr}(A(M)^2)}{N(1+\delta(M))^2}.
\end{align*}

\begin{definition}[Lipschitz constant and Uniform bound]
The \emph{Lipschitz constant} of a function $f$ is defined as 
\[
    L_f := \inf \left\{ L > 0 : \lvert f(x) - f(y) \rvert \leq L \lVert x - y \rVert \ \ \forall x,y \right\}.
\]
The \emph{uniform bound} of $f$ is defined as 
\[
    B_f := \sup_{x} \lvert f(x) \rvert .
\]
\end{definition}

Let us first start by showing the Lipschitzness of the map \( M \mapsto \delta(M) \).

\begin{lemma}[Uniform bound and Lipschitzness of \(\delta\)]\label{lem:delta_bound_lip}
Let \(\lambda>0\)  and integers \(N>T\). For \(M\succeq 0 \in \mathbb{R}^{T\times T}\) and \(\delta\ge 0\) define
\[
\varphi(\delta,M):=\frac{1}{N}\operatorname{Tr}\!\left(\Big(\tfrac{M}{1+\delta}+\lambda I_T\Big)^{-1}M\right)-\delta.
\]
For each \(M\succeq0\), let \(\delta(M)\ge0\) be any solution of \(\varphi(\delta(M),M)=0\) (e.g.\ \cite{louart2021concentrationmeasurelargerandom}). Then
\[
0\le \delta(M)\ \le\ \frac{T}{\,N-T\,}\qquad\text{and}\qquad
|\delta(M_1)-\delta(M_2)|\ \le\ \frac{\sqrt{T}}{\lambda\,(N-T)}\,\|M_1-M_2\|_F,
\]
i.e.\ \(B_\delta\le T/(N-T)\) and the map \(M\mapsto \delta(M)\) is Lipschitz (w.r.t.\ \(\|\cdot\|_F\)) with constant \(L_\delta\le \sqrt{T}/(\lambda(N-T))\).
\end{lemma}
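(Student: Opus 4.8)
The plan is to recognize the defining relation $\varphi(\delta(M),M)=0$ as the fixed-point equation $\delta(M)=h(\delta(M),M)$ for the scalar map
\[
h(\delta,M):=\varphi(\delta,M)+\delta=\frac{1}{N}\operatorname{Tr}\!\left(\Big(\tfrac{M}{1+\delta}+\lambda I_T\Big)^{-1}M\right),
\]
and then to extract the two ingredients that drive the standard fixed-point perturbation argument: (a) a uniform-in-$M$ contraction bound for $\delta\mapsto h(\delta,M)$, and (b) a uniform-in-$\delta$ Lipschitz bound for $M\mapsto h(\delta,M)$ in $\|\cdot\|_F$. Throughout, the matrix $B:=\frac{M}{1+\delta}+\lambda I_T$ satisfies $B\succeq\lambda I_T\succ0$, so $\|B^{-1}\|\le 1/\lambda$ and $h$ is smooth on $\{M\succeq0\}\times[0,\infty)$, which justifies the derivative computations below.

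For the uniform bound I would substitute $\kappa:=\lambda(1+\delta)$ and diagonalize $M$ with eigenvalues $m_1,\dots,m_T\ge0$, turning the fixed-point equation into $\delta=\frac{1+\delta}{N}\sum_{i=1}^T\frac{m_i}{m_i+\kappa}$. Since $\kappa\ge\lambda>0$, each summand lies in $[0,1)$, so $\delta<\frac{(1+\delta)T}{N}$, i.e.\ $\delta(N-T)<T$, whence $\delta(M)\le T/(N-T)$; nonnegativity is hypothesized. (The same monotonicity shows $\delta\mapsto h(\delta,M)-\delta$ is strictly decreasing and bounded above, so the fixed point is in fact unique.)

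For the Lipschitz bound I would prove two derivative estimates. First, from the eigenvalue representation $h(\delta,M)=\frac{1}{N}\sum_{i}\frac{(1+\delta)m_i}{m_i+\lambda(1+\delta)}$ one computes $\partial_\delta h(\delta,M)=\frac{1}{N}\sum_{i}\frac{m_i^2}{(m_i+\kappa)^2}\le T/N$, uniformly in $\delta\ge0$ and $M\succeq0$; since $N>T$ this is a strict contraction factor. Second, writing $h=\frac{1+\delta}{N}\big(T-\lambda\operatorname{Tr}(B^{-1})\big)$ and using $\partial_M\operatorname{Tr}(B^{-1})[\Delta]=-\tfrac{1}{1+\delta}\operatorname{Tr}(B^{-2}\Delta)$, the directional derivative in $M$ is $\partial_M h(\delta,M)[\Delta]=\frac{\lambda}{N}\operatorname{Tr}(B^{-2}\Delta)$, so by Cauchy--Schwarz for the Frobenius inner product and $\|B^{-2}\|_F\le\sqrt{T}\,\|B^{-2}\|\le\sqrt{T}/\lambda^2$ we get $|\partial_M h(\delta,M)[\Delta]|\le\frac{\sqrt{T}}{\lambda N}\|\Delta\|_F$; integrating along the segment $t\mapsto(1-t)M_1+tM_2$, which stays in the convex PSD cone so the bound $\|B^{-1}\|\le1/\lambda$ holds throughout, yields $|h(\delta,M_1)-h(\delta,M_2)|\le\frac{\sqrt{T}}{\lambda N}\|M_1-M_2\|_F$ for every fixed $\delta\ge0$.

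Finally I would combine the two estimates: with $\delta_k:=\delta(M_k)$ and the mean value theorem in $\delta$ (using (a)),
\[
|\delta_1-\delta_2|\le|h(\delta_1,M_1)-h(\delta_2,M_1)|+|h(\delta_2,M_1)-h(\delta_2,M_2)|\le\frac{T}{N}|\delta_1-\delta_2|+\frac{\sqrt{T}}{\lambda N}\|M_1-M_2\|_F,
\]
and rearranging using $N>T$ gives $|\delta_1-\delta_2|\le\frac{\sqrt{T}}{\lambda(N-T)}\|M_1-M_2\|_F$, as claimed. The argument is essentially routine; the only points requiring care are making the $\delta$-contraction strict and uniform so that $1-T/N$ stays bounded away from $0$ (which is precisely where $N>T$ is used) and keeping the perturbation path $t\mapsto(1-t)M_1+tM_2$ inside $\{M\succeq0\}$ so that the resolvent bound holds along it.
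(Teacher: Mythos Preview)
Your proof is correct and uses essentially the same ingredients as the paper: the eigenvalue representation for the uniform bound, and the identities $\partial_M h(\delta,M)[\Delta]=\frac{\lambda}{N}\operatorname{Tr}(B^{-2}\Delta)$ and $\partial_\delta h\le T/N$ for the Lipschitz part. The only cosmetic difference is packaging: the paper invokes the implicit function theorem on $\varphi(\delta,M)=0$ to obtain $D\delta(M)[H]=-d_M\varphi[H]/\partial_\delta\varphi$ and bounds the ratio, whereas you carry out the equivalent fixed-point perturbation inequality $|\delta_1-\delta_2|\le\frac{T}{N}|\delta_1-\delta_2|+\frac{\sqrt T}{\lambda N}\|M_1-M_2\|_F$ directly; since $\varphi=h-\delta$, the two computations are algebraically identical and yield the same constants.
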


\begin{proof}
Fix \(M\succeq 0\) and set
\[
Q:=\Big(\tfrac{M}{1+\delta}+\lambda I_T\Big)^{-1},\qquad
P:=Q-\tfrac{1}{1+\delta}QMQ.
\]
Note that \(Q\succeq 0\) and \(Q^{-1}\succeq \lambda I_T\), hence \(\|Q\|\le \lambda^{-1}\).

\paragraph{Uniform bound.}
At a fixed point \(\varphi(\delta(M),M)=0\), letting \(\{\sigma_i\}_{i=1}^r\) be the nonzero eigenvalues of \(M\) with \(r=\operatorname{rank}(M)\le T\),
\[
\delta
=\frac1N\sum_{i=1}^r \frac{\sigma_i}{\sigma_i/(1+\delta)+\lambda}
=\frac1N\sum_{i=1}^r \frac{(1+\delta)\sigma_i}{\sigma_i+\lambda(1+\delta)}
\le \frac{r}{N}(1+\delta)\le \frac{T}{N}(1+\delta).
\]
Rearranging gives \(\delta \le T/(N-T)\).

\paragraph{Lipschitzness.}
Differentiate \(\varphi\):
\[
\partial_\delta \varphi
=\frac{1}{N(1+\delta)^2}\operatorname{Tr}(QMQM)-1,
\qquad
d_M\varphi[H]=\frac{1}{N}\operatorname{Tr}(P\,H)\quad(\forall H=H^\top).
\]
From the spectral decomposition of \(M\),
\[
\operatorname{Tr}(QMQM)
=\sum_{i:\sigma_i>0}\frac{\sigma_i^2}{\big(\sigma_i/(1+\delta)+\lambda\big)^2}
\le r(1+\delta)^2\le T(1+\delta)^2,
\]
so at a fixed point,
\[
-\partial_\delta\varphi(\delta(M),M)\ \ge\ 1-\frac{T}{N}\ =\ \frac{N-T}{N}\ >0.
\]
Using \(Q^{-1}=\tfrac{M}{1+\delta}+\lambda I_T\),
\[
\frac{1}{1+\delta}QMQ=Q(Q^{-1}-\lambda I_T)Q=Q-\lambda Q^2,
\]
hence \(P=\lambda Q^2\). Therefore
\[
\|P\|\le \lambda\|Q\|^2\le \lambda\cdot \lambda^{-2}=\lambda^{-1},
\qquad
\|P\|_F\le \sqrt{T}\,\|P\|\le \frac{\sqrt{T}}{\lambda},
\]
and thus
\[
|d_M\varphi[H]|
=\frac{1}{N}\,|\operatorname{Tr}(P\,H)|
\le \frac{1}{N}\|P\|_F\|H\|_F
\le \frac{\sqrt{T}}{\lambda N}\,\|H\|_F.
\]
By the implicit function theorem,
\[
D\delta(M)[H]
=-\frac{d_M\varphi(\delta(M),M)[H]}{\partial_\delta\varphi(\delta(M),M)},
\]
whence
\[
|D\delta(M)[H]|
\le \frac{\frac{\sqrt{T}}{\lambda N}}{\frac{N-T}{N}}\,\|H\|_F
= \frac{\sqrt{T}}{\lambda\,(N-T)}\,\|H\|_F.
\]
Taking a supremum over \(\|H\|_F=1\) yields \(L_\delta\le \sqrt{T}/(\lambda(N-T))\).
\end{proof}

\begin{lemma}[Uniform bound and Lipschitzness of $A$]\label{lem:A-global}
Let
\[
A(M):=M\Big(\lambda I_T+\tfrac{M}{1+\delta(M)}\Big)^{-1}, \qquad M\succeq0,
\]
where $\delta(M)$ is defined as in Lemma~\ref{lem:delta_bound_lip}. Then
\[
 B_A \;\le\; 1+B_\delta, 
 \qquad\text{and}\qquad
 L_A \;\le\; \frac{1}{\lambda}+L_\delta.
\]
\end{lemma}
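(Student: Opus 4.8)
The plan is to view $A(M)$ as a spectral function of the single symmetric matrix $M$. Since $M$ commutes with $\big(\lambda I_T+\tfrac{M}{1+\delta(M)}\big)^{-1}$, the matrix $A(M)$ is diagonal in the eigenbasis of $M$, with eigenvalues $g_{\delta(M)}(\sigma_i)$, where the $\sigma_i\ge0$ are the eigenvalues of $M$ and, for $\delta\ge0$ and $\sigma\ge0$,
\[
g_\delta(\sigma):=\frac{\sigma}{\lambda+\sigma/(1+\delta)}=\frac{(1+\delta)\sigma}{\lambda(1+\delta)+\sigma}\ \in\ [0,\,1+\delta).
\]
The uniform bound is then immediate: $\|A(M)\|\le 1+\delta(M)\le 1+B_\delta$ by Lemma~\ref{lem:delta_bound_lip}.

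For the Lipschitz estimate I would decouple the two channels through which $A$ depends on $M$ — namely $M$ itself and the scalar $\delta(M)$ — by inserting an intermediate term. Set $\Phi(c,M):=M(\lambda I_T+cM)^{-1}=\tfrac1c\big(I_T-\lambda(\lambda I_T+cM)^{-1}\big)$ and $c_i:=1/(1+\delta(M_i))$, so that $A(M_i)=\Phi(c_i,M_i)$; then
\[
\|A(M_1)-A(M_2)\| \le \underbrace{\|\Phi(c_1,M_1)-\Phi(c_1,M_2)\|}_{(\mathrm{I})} + \underbrace{\|\Phi(c_1,M_2)-\Phi(c_2,M_2)\|}_{(\mathrm{II})}.
\]

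For $(\mathrm{I})$ (freeze $\delta$, vary $M$), the rewriting of $\Phi$ above and the resolvent identity give $\Phi(c_1,M_1)-\Phi(c_1,M_2)=\lambda(\lambda I_T+c_1M_1)^{-1}(M_1-M_2)(\lambda I_T+c_1M_2)^{-1}$; since $\lambda I_T+c_iM_j\succeq\lambda I_T$, each resolvent has operator norm at most $1/\lambda$, hence $(\mathrm{I})\le\tfrac1\lambda\|M_1-M_2\|_F$. The key point is that $M$ appears only sandwiched between two resolvents, so its a priori unbounded norm is harmless. For $(\mathrm{II})$ (freeze $M_2$, vary $\delta$), both $\Phi(c_1,M_2)$ and $\Phi(c_2,M_2)$ are diagonal in the eigenbasis of $M_2$, and an elementary manipulation gives, for each eigenvalue $\sigma\ge0$ of $M_2$,
\[
g_{\delta(M_1)}(\sigma)-g_{\delta(M_2)}(\sigma)=\big(\delta(M_1)-\delta(M_2)\big)\,\frac{\sigma^{2}}{\big(\lambda(1+\delta(M_1))+\sigma\big)\big(\lambda(1+\delta(M_2))+\sigma\big)},
\]
and the fraction, being a product of two factors each in $[0,1)$, is at most $1$. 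Hence $(\mathrm{II})\le|\delta(M_1)-\delta(M_2)|\le L_\delta\|M_1-M_2\|_F$ by Lemma~\ref{lem:delta_bound_lip}, and summing the two bounds yields $L_A\le\tfrac1\lambda+L_\delta$.

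The only genuine subtlety I anticipate is the bookkeeping that keeps the $\delta$-channel contributing exactly $L_\delta$ rather than an inflated constant: the intermediate-term split cleanly separates the two effects, and in $(\mathrm{II})$ the $\sigma^{2}$ in the numerator is precisely absorbed by the two denominators, which is what produces the clean constant. All matrix norms on the left-hand side above are operator norms, which is all that is needed downstream, where $A(M)$ enters only through a trace $\operatorname{Tr}(\tilde\Theta_\ast^{\top}A(M)\tilde\Theta_\ast)$; if one insists on the Frobenius norm for $A$ as well, term $(\mathrm{II})$ acquires a harmless factor $\sqrt{T}$ (with $T$ fixed).
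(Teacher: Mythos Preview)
Your proof is correct and follows essentially the same route as the paper: the same intermediate-term split into a ``freeze $\delta$, vary $M$'' piece (I) and a ``freeze $M$, vary $\delta$'' piece (II), with (I) handled via the rewriting $\Phi(c,M)=\tfrac1c(I_T-\lambda(\lambda I_T+cM)^{-1})$ and the resolvent identity exactly as in the paper. The only cosmetic difference is in (II): you compute the eigenvalue difference $g_{\delta_1}(\sigma)-g_{\delta_2}(\sigma)$ directly and bound the scalar factor by $1$, whereas the paper uses the matrix resolvent identity $MQ_{s_1}-MQ_{s_2}=(s_2-s_1)MQ_{s_1}MQ_{s_2}$ together with $\|MQ_s\|\le 1/s$ to reach the identical bound $|s_2-s_1|/(s_1s_2)=|\delta_1-\delta_2|$; since everything in (II) commutes with $M_2$, these are the same computation in different notation.
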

\begin{proof}
Throughout, $\preceq$ the Loewner order.
For brevity write, for $s\in(0,1]$,
\[
Q_s(M):=(\lambda I_T+sM)^{-1},\qquad Q(M):=Q_{s(M)}(M).
\]
Note that $M\succeq0$ implies $0\prec \lambda I_T\preceq \lambda I_T+sM$, hence
$\|Q_s(M)\|\le \lambda^{-1}$ and $0\preceq Q_s(M)\preceq \lambda^{-1}I_T$.

Using $X(\lambda I_T+sX)^{-1}=\frac{1}{s}\big(I_T-\lambda(\lambda I_T+sX)^{-1}\big)$ for any $X\succeq0$, we have
\begin{equation*}\label{eq:A-rewrite}
A(M)=M Q(M)=\frac{1}{s(M)}\big(I_T-\lambda Q(M)\big).
\end{equation*}

\paragraph{Uniform bound.}
From $0\preceq Q(M)\preceq \lambda^{-1}I_T$ we get $0\preceq I_T-\lambda Q(M)\preceq I_T$.
We have
\[
0\preceq A(M)\preceq \frac{1}{s(M)}I_T=(1+\delta(M))\,I_T,
\]
hence $\|A(M)\|\le 1+\delta(M)$ and therefore
\[
B_A\le 1+B_\delta.
\]

\paragraph{Lipschitzness.}
Let $M_1,M_2\succeq0$, and set $s_i:=s(M_i)$ and $\delta_i:=\delta(M_i)$.
Decompose
\[
\begin{aligned}
A(M_1)-A(M_2)
&=\underbrace{M_1(\lambda I_T+s_1 M_1)^{-1}-M_2(\lambda I_T+s_1 M_2)^{-1}}_{\text{(I)}} \\
&\quad+\underbrace{M_2(\lambda I_T+s_1 M_2)^{-1}-M_2(\lambda I_T+s_2 M_2)^{-1}}_{\text{(II)}}.
\end{aligned}
\]

\emph{(I) Fixed $s$ part is $1/\lambda$-Lipschitz.}
For any $X,Y\succeq0$ and fixed $s\in(0,1]$,
\[
X(\lambda I_T+sX)^{-1}-Y(\lambda I_T+sY)^{-1}
=\frac{\lambda}{s}\Big[(\lambda I_T+sY)^{-1}-(\lambda I_T+sX)^{-1}\Big].
\]
By the resolvent identity,
\[
(\lambda I_T+sY)^{-1}-(\lambda I_T+sX)^{-1}
=(\lambda I_T+sY)^{-1}\,s(Y-X)\,(\lambda I_T+sX)^{-1}.
\]
Taking norms and using $\|(\lambda I_T+sZ)^{-1}\|\le \lambda^{-1}$ for $Z\succeq0$ gives
\[
\|X(\lambda I_T+sX)^{-1}-Y(\lambda I_T+sY)^{-1}\|
\le \frac{\lambda}{s}\cdot \frac{1}{\lambda}\cdot s \cdot \frac{1}{\lambda}\,\|X-Y\|
=\frac{1}{\lambda}\|X-Y\|.
\]
Thus $\|(I)\|\le \tfrac{1}{\lambda}\|M_1-M_2\|$.

\emph{(II) Varying $s$.}
Using again the resolvent identity in the $s$-parameter,
\[
Q_{s_1}(M)-Q_{s_2}(M)=(s_2-s_1)\,Q_{s_1}(M)\,M\,Q_{s_2}(M),
\]
and multiplying by $M$ on the left,
\[
M Q_{s_1}(M)-M Q_{s_2}(M)=(s_2-s_1)\,M Q_{s_1}(M)\,M Q_{s_2}(M).
\]
We have $\|M Q_s(M)\|\le \frac{1}{s}$, therefore,
\[
\|M Q_{s_1}(M)-M Q_{s_2}(M)\|
\le |s_2-s_1|\,\|M Q_{s_1}(M)\|\,\|M Q_{s_2}(M)\|
\le \frac{|s_2-s_1|}{s_1 s_2}.
\]
With $s_i=(1+\delta_i)^{-1}$ we have the exact identity
\[
\frac{|s_2-s_1|}{s_1 s_2}
=\left|\frac{1}{s_1}-\frac{1}{s_2}\right|
=|(1+\delta_1)-(1+\delta_2)|
=|\delta(M_1)-\delta(M_2)|.
\]
Hence
\[
\|(II)\|\le |\delta(M_1)-\delta(M_2)|\le L_\delta\,\|M_1-M_2\|.
\]

Combining (I) and (II) yields
\[
\|A(M_1)-A(M_2)\|
\le \Big(\frac{1}{\lambda}+L_\delta\Big)\,\|M_1-M_2\|,
\]
so $L_A\le \frac{1}{\lambda}+L_\delta$, as claimed.
\end{proof}

\begin{lemma}[Bounds and Lipschitzness of $t_2$ and $t_3$]\label{lem:t2t3}
We have
\[
B_{t_2} \le B_A\,t_1, \qquad B_{t_3} \le B_A^2\,t_1.
\]
Moreover, $t_2$ and $t_3$ are Lipschitz in Frobenius norm with
\[
L_{t_2} \le t_1 L_A \qquad
L_{t_3} \le 2 t_1 B_A L_A
\]
\end{lemma}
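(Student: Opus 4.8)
The plan is to reduce both statements to operator-norm control of $A(M)$, which is already supplied by \Cref{lem:A-global}. I would first rewrite $t_2$ and $t_3$ in a form where $\tilde\Theta^{\ast}$ enters only through the positive semidefinite matrix $P:=\tilde\Theta^{\ast}\tilde\Theta^{\ast^\top}$: by cyclicity of the trace, $t_1=\operatorname{Tr}(P)$, $t_2(M)=\operatorname{Tr}(A(M)P)$ and $t_3(M)=\operatorname{Tr}(A(M)^2 P)$. A key structural remark is that $A(M)$ is a scalar function of the symmetric matrix $M$ — namely $A(M)=g_M(M)$ with $g_M(x)=x\big(x/(1+\delta(M))+\lambda\big)^{-1}$ — hence it is symmetric, positive semidefinite, diagonal in the eigenbasis of $M$, and satisfies $\|A(M)^2\|=\|A(M)\|^2$.

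For the uniform bounds I would use the elementary inequality $|\operatorname{Tr}(BP)|\le\|B\|\operatorname{Tr}(P)$ valid for $P\succeq0$ (diagonalize $P=\sum_k p_k v_kv_k^\top$ with $p_k\ge0$ and bound each $|v_k^\top Bv_k|\le\|B\|$). Applying it with $B=A(M)$ and $B=A(M)^2$, together with the bound $\|A(M)\|\le B_A$ from \Cref{lem:A-global}, gives at once $B_{t_2}\le B_A t_1$ and $B_{t_3}\le B_A^2 t_1$.

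For the Lipschitz estimates, write $t_2(M_1)-t_2(M_2)=\operatorname{Tr}\big((A(M_1)-A(M_2))P\big)$ and apply the same trace inequality with the bound $\|A(M_1)-A(M_2)\|\le L_A\|M_1-M_2\|_F$ from \Cref{lem:A-global}; this yields $L_{t_2}\le t_1 L_A$. For $t_3$, I would telescope the square,
\[
A(M_1)^2-A(M_2)^2=A(M_1)\big(A(M_1)-A(M_2)\big)+\big(A(M_1)-A(M_2)\big)A(M_2),
\]
take operator norms using submultiplicativity, bound $\|A(M_i)\|\le B_A$ and $\|A(M_1)-A(M_2)\|\le L_A\|M_1-M_2\|_F$ to get $\|A(M_1)^2-A(M_2)^2\|\le 2B_A L_A\|M_1-M_2\|_F$, and then apply the trace inequality once more, obtaining $L_{t_3}\le 2t_1 B_A L_A$.

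I do not expect a serious obstacle: all the analytic work (the fixed-point estimate for $\delta$ and the resolvent manipulations for $A$) is already encapsulated in $B_\delta,L_\delta,B_A,L_A$. The only things to keep track of are the choice of norms — \Cref{lem:A-global} bounds $\|A(M_1)-A(M_2)\|$ in operator norm in terms of $\|M_1-M_2\|$, so to state the result for the Frobenius norm one simply uses $\|M_1-M_2\|\le\|M_1-M_2\|_F$ on the input side — and the observation that $A(M)\succeq0$ and $P\succeq0$, so that the one-line trace inequality applies with $\operatorname{Tr}(P)=t_1$ rather than a cruder $\|P\|_F$ factor.
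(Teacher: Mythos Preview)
Your proposal is correct and essentially identical to the paper's proof: the paper sets $G:=\tilde\Theta^{\ast\top}\tilde\Theta^\ast$ (which is your $P$ up to cyclicity), uses the same trace inequality $|\operatorname{Tr}(GB)|\le\|B\|\operatorname{Tr}(G)$ for $G\succeq0$, and telescopes $A(M_1)^2-A(M_2)^2$ exactly as you do. Your extra remarks on why $A(M)\succeq0$ and on the operator-to-Frobenius norm conversion for the input are, if anything, slightly more explicit than the paper's write-up.
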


\begin{proof}
Set $G := \tilde\Theta^{\ast\top}\tilde\Theta^\ast\succeq 0$, so that $\operatorname{Tr}(G)=t_1$.
\paragraph{Uniform bounds.} 
Since $A(M)\succeq 0$,
\[
t_2(M) = \operatorname{Tr}(GA(M))
= \operatorname{Tr}(G^{1/2}A(M)G^{1/2})
\le \|A(M)\|\,\operatorname{Tr}(G)
\le B_A t_1.
\]
Similarly,
\[
t_3(M) = \operatorname{Tr}(GA(M)^2)
\le \|A(M)^2\|\,\operatorname{Tr}(G)
= \|A(M)\|^2 t_1
\le B_A^2 t_1.
\]
\paragraph{Lipschitzness.}
For $M_1,M_2$,
\[
|t_2(M_1)-t_2(M_2)|
= \bigl|\operatorname{Tr}\bigl(G(A(M_1)-A(M_2))\bigr)\bigr|
\le t_1 \|A(M_1)-A(M_2)\|.
\]
Using the Lipschitz property of $A$ gives
\[
|t_2(M_1)-t_2(M_2)| \le t_1 L_A\,\|M_1-M_2\|_F.
\]
Note
\[
A(M_1)^2-A(M_2)^2
= (A(M_1)-A(M_2))A(M_1) + A(M_2)(A(M_1)-A(M_2)).
\]
Hence
\[
\|A(M_1)^2-A(M_2)^2\|
\le (\|A(M_1)\|+\|A(M_2)\|)\,\|A(M_1)-A(M_2)\|
\le 2B_A \|A(M_1)-A(M_2)\|.
\]
Therefore
\[
|t_3(M_1)-t_3(M_2)|
= \bigl|\operatorname{Tr}\bigl(G(A(M_1)^2-A(M_2)^2)\bigr)\bigr|
\le t_1 \|A(M_1)^2-A(M_2)^2\|
\le 2t_1 B_A L_A \|M_1-M_2\|_F.
\]
\end{proof}

\begin{lemma}[Product of bounded Lipschitz functions]\label{lem:prod}
Let \(f,g:\mathcal D\to \mathbb{R}\) satisfy
\[
|f(M)|\le B_f,\qquad
|g(M)|\le B_g
\quad(\forall M\in\mathcal D),
\]
Then the product \(h(M):=f(M)g(M)\) is Lipschitz with constant
\[
\,L_h \leq B_f\,L_g+B_g\,L_f\,.
\]
\end{lemma}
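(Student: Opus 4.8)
The plan is to prove this by the standard add-and-subtract (telescoping) trick on the product, exploiting the uniform bounds to control the two resulting factors. Concretely, for any $M_1, M_2 \in \mathcal{D}$ I would write
\[
h(M_1) - h(M_2) = f(M_1)\bigl(g(M_1) - g(M_2)\bigr) + g(M_2)\bigl(f(M_1) - f(M_2)\bigr),
\]
and then apply the triangle inequality to get
\[
|h(M_1) - h(M_2)| \le |f(M_1)|\,|g(M_1) - g(M_2)| + |g(M_2)|\,|f(M_1) - f(M_2)|.
\]

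Next I would bound $|f(M_1)| \le B_f$ and $|g(M_2)| \le B_g$ using the uniform bounds, and $|g(M_1) - g(M_2)| \le L_g \|M_1 - M_2\|$, $|f(M_1) - f(M_2)| \le L_f \|M_1 - M_2\|$ using the Lipschitz hypotheses. Substituting gives $|h(M_1) - h(M_2)| \le (B_f L_g + B_g L_f)\,\|M_1 - M_2\|$, and taking the infimum over admissible Lipschitz constants for $h$ (equivalently, reading off the constant from this inequality) yields $L_h \le B_f L_g + B_g L_f$, as claimed.

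There is no real obstacle here: the lemma is an elementary bookkeeping fact, and the only thing to be careful about is the \emph{symmetric} choice in the telescoping step (putting $f(M_1)$ on one term and $g(M_2)$ on the other) so that each leftover factor is one that we have a uniform bound for. This lemma will subsequently be combined with \Cref{lem:delta_bound_lip}, \Cref{lem:A-global}, and \Cref{lem:t2t3} to conclude that $\mathcal{B}_\infty^2$ is Lipschitz in $M$ with an $O(1)$ constant, which is the missing ingredient in the proof of \Cref{thm:linear_esn}.
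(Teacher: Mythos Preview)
Your argument is correct and is exactly the standard telescoping proof one would expect; the paper itself states this lemma without proof, so there is nothing to compare against beyond noting that your add-and-subtract decomposition is the canonical one.
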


\begin{lemma}[Uniform bound and Lipschitzness of $M\mapsto (1-\alpha(M))^{-1}$]
Let $N>T$, $\lambda>0$, and $M\succeq0\in\mathbb{R}^{T\times T}$.  
Define
$
\alpha(M):=\frac{\operatorname{Tr}(A(M)^2)}{N(1+\delta(M))^2}.$
Then the map
$
M \;\longmapsto\; \frac{1}{1-\alpha(M)}
$
is uniformly bounded and Lipschitz (with respect to $\|\cdot\|_F$), with
\[
B_{(1-\alpha)^{-1}} \le \frac{N}{N-T}, 
\quad 
L_{(1-\alpha)^{-1}} \le \Big(\tfrac{N}{N-T}\Big)^2 \frac{1}{N}\Big[2\sqrt{T}(1+B_\delta)L_A 
+2T(1+B_\delta)^2L_\delta\Big].
\]
\end{lemma}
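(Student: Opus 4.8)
The plan is to regard $M\mapsto(1-\alpha(M))^{-1}$ as the scalar reciprocal composed with $M\mapsto 1-\alpha(M)$, so that everything reduces to a uniform lower bound on $1-\alpha(M)$ together with a Lipschitz bound on $\alpha$. For the lower bound, observe that $A(M)$ and $\bigl(\tfrac{M}{1+\delta(M)}+\lambda I_T\bigr)^{-1}$ are both matrix functions of the symmetric PSD matrix $M$ and hence commute, so $A(M)$ is symmetric with eigenvalues $\tfrac{(1+\delta(M))\sigma_i}{\sigma_i+\lambda(1+\delta(M))}\in[0,\,1+\delta(M))$, where the $\sigma_i\ge 0$ are the eigenvalues of $M$. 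Therefore $\operatorname{Tr}(A(M)^2)\le r\,(1+\delta(M))^2\le T(1+\delta(M))^2$, with $r=\operatorname{rank}(M)\le T$, and consequently $\alpha(M)\le T/N<1$, i.e.\ $1-\alpha(M)\ge (N-T)/N>0$. This immediately gives $B_{(1-\alpha)^{-1}}\le N/(N-T)$. For the Lipschitz part I would use the elementary inequality $|g_1^{-1}-g_2^{-1}|\le |g_1-g_2|/g_{\min}^2$, valid for $g_1,g_2\ge g_{\min}>0$, with $g=1-\alpha$ and $g_{\min}=(N-T)/N$; since $L_{1-\alpha}=L_\alpha$, this yields $L_{(1-\alpha)^{-1}}\le\bigl(\tfrac{N}{N-T}\bigr)^2 L_\alpha$, and it remains to bound $L_\alpha$.

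To bound $L_\alpha$, write $\alpha(M)=\tfrac1N\,p(M)\,r(M)$ with $p(M):=\operatorname{Tr}(A(M)^2)$ and $r(M):=(1+\delta(M))^{-2}$, and apply Lemma~\ref{lem:prod}. For $r$: the scalar map $x\mapsto(1+x)^{-2}$ has derivative bounded by $2$ on $[0,\infty)$, so composing with the $L_\delta$-Lipschitz map $M\mapsto\delta(M)$ of Lemma~\ref{lem:delta_bound_lip} gives $L_r\le 2L_\delta$, while $B_r\le 1$ because $\delta(M)\ge0$. For $p$: the lower-bound step together with $\|A(M)\|\le B_A=1+B_\delta$ (Lemma~\ref{lem:A-global}) gives $B_p\le T B_A^2=T(1+B_\delta)^2$; for the Lipschitz constant, expand $A(M_1)^2-A(M_2)^2=(A(M_1)-A(M_2))A(M_1)+A(M_2)(A(M_1)-A(M_2))$, apply the trace bound $|\operatorname{Tr}(X)|\le\sqrt T\,\|X\|_F$ (valid on $T\times T$ matrices), $\|A(M_i)\|\le B_A$, and the Lipschitz bound for $A$ from Lemma~\ref{lem:A-global}, obtaining $L_p\le 2\sqrt T\,B_A L_A$. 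Lemma~\ref{lem:prod} then gives $L_{pr}\le B_p L_r+B_r L_p\le 2T(1+B_\delta)^2 L_\delta+2\sqrt T\,(1+B_\delta)L_A$, and hence $L_\alpha\le\tfrac1N\bigl[\,2\sqrt T\,(1+B_\delta)L_A+2T(1+B_\delta)^2 L_\delta\,\bigr]$.

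Combining the two parts gives exactly the stated bounds $B_{(1-\alpha)^{-1}}\le N/(N-T)$ and $L_{(1-\alpha)^{-1}}\le\bigl(\tfrac{N}{N-T}\bigr)^2\tfrac1N\bigl[2\sqrt T\,(1+B_\delta)L_A+2T(1+B_\delta)^2 L_\delta\bigr]$. The one genuinely structural ingredient is the uniform gap $1-\alpha(M)\ge(N-T)/N$: it is what makes the reciprocal (and thus the whole risk functional $\mathcal B_\infty^2$) finite and Lipschitz, and it rests entirely on the spectral fact that every eigenvalue of $A(M)$ is strictly below $1+\delta(M)$ together with $\operatorname{rank}(M)\le T<N$. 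Everything else is routine norm bookkeeping, and the only place requiring care is keeping the operator- versus Frobenius-norm factors coherent between Lemmas~\ref{lem:delta_bound_lip} and~\ref{lem:A-global} and the trace inequalities, so that the powers of $\sqrt T$ in the final constant are precisely the ones claimed.
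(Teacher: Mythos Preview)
Your proposal is correct and follows essentially the same route as the paper's proof: both establish the uniform gap $\alpha(M)\le T/N$ from $\|A(M)\|\le 1+\delta(M)$ (you via the explicit eigenvalue formula, the paper via Lemma~\ref{lem:A-global}), then factor $\alpha=\tfrac1N\cdot\operatorname{Tr}(A^2)\cdot(1+\delta)^{-2}$, bound each factor's uniform size and Lipschitz constant exactly as you do, apply the product rule (Lemma~\ref{lem:prod}), and finish by composing with $x\mapsto(1-x)^{-1}$ using the same derivative bound $(N/(N-T))^2$. The only cosmetic difference is that you spell out the reciprocal inequality and the spectral description of $A(M)$ more explicitly than the paper does.
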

\begin{proof}
\noindent\textbf{Uniform bound.}  
From Lemma~\ref{lem:A-global}, $\|A(M)\|\le 1+\delta(M)$. Hence
\[
\alpha(M)
=\frac{\operatorname{Tr}(A(M)^2)}{N(1+\delta(M))^2}
\le \frac{T\|A(M)\|^2}{N(1+\delta(M))^2}
\le \frac{T}{N}.
\]
Since $N>T$, this gives $0\le \alpha(M)\le T/N<1$, and therefore
\[
\frac{1}{1-\alpha(M)} \;\le\; \frac{1}{1-T/N} \;=\; \frac{N}{N-T}.
\]

\noindent\textbf{Lipschitzness.} Write $\alpha(M)=\tfrac{1}{N}g(M)h(M)$, where
\[
g(M):=\operatorname{Tr}(A(M)^2), 
\qquad h(M):=(1+\delta(M))^{-2}.
\]
We have 
\[
L_g \le 2\sqrt{T}(1+B_\delta)L_A, 
\quad |g(M)|\le T(1+B_\delta)^2,
\qquad L_h \le 2L_\delta, 
\quad |h(M)|\le 1.
\]
By Lemma~\ref{lem:prod},
\[
L_\alpha \le \tfrac{1}{N}\big(B_g L_h + B_h L_g\big)
= \tfrac{1}{N}\Big[2\sqrt{T}(1+B_\delta)L_A 
+2T(1+B_\delta)^2L_\delta\Big].
\]

Now, the function $x\mapsto (1-x)^{-1}$ has derivative $(1-x)^{-2}$, so over $[0,T/N]$ the Lipschitz factor is at most $(N/(N-T))^2$. Hence
\[
L_{(1-\alpha)^{-1}} \;\le\; \Big(\tfrac{N}{N-T}\Big)^2 L_\alpha.
\]

This proves the claim.
\end{proof}
Now, using \Cref{lem:prod} and the fact that each function is Lipschitz and bounded, we obtain that 
$M \;\mapsto\; \mathcal{B}_\infty^2(M)$
is Lipschitz, with constant $L$ of order \(O(1)\).
That is, as we said above (by defining $M_\infty:= \Sigma_u^{\frac{1}{2}}\operatorname{diag}(\varphi^{(i-T)})\Sigma_u^{\frac{1}{2}}$), we have
\begin{equation*}
\begin{aligned}
    \left| \mathcal{B}_\infty^2 (\tilde{S}^\top \tilde{S}) - \mathcal{B}_\infty^2 (M_\infty) \right| 
    &\leq \left| \mathcal{B}_\infty^2 (\tilde{S}^\top \tilde{S}) - \mathcal{B}_\infty^2 (\mathbb{E}[\tilde{S}^\top \tilde{S}]) \right| 
    + \left| \mathcal{B}_\infty^2 (\mathbb{E}[\tilde{S}^\top \tilde{S}]) - \mathcal{B}_\infty^2 (M_\infty) \right| \\
    &\leq L \left\| \tilde{S}^\top \tilde{S} - \mathbb{E}[\tilde{S}^\top \tilde{S}] \right\|_F 
    + L \left\| \mathbb{E}[\tilde{S}^\top \tilde{S}] - M_\infty \right\|_F
\end{aligned}
\end{equation*}
Thus 
\begin{equation*}
\begin{aligned}
    \left| \mathbb{E}[\mathcal{B}_\infty^2 (\tilde{S}^\top \tilde{S})] - \mathcal{B}_\infty^2 (M) \right| 
    &\leq L\, \mathbb{E} \left[ \left\| \tilde{S}^\top \tilde{S} - \mathbb{E}[\tilde{S}^\top \tilde{S}] \right\|_F \right] 
    + L \left\| \mathbb{E}[\tilde{S}^\top \tilde{S}] - M \right\|_F \\
    &\to 0.
\end{aligned}
\end{equation*}
By plugging in $M_\infty$ and simplifying, we obtain the desired result.
\end{document}